\documentclass[12pt,a4paper,notitlepage]{article}
\usepackage{amsthm}
\usepackage{amsfonts}
\usepackage{amsmath}
\usepackage{mathtools}

\usepackage{graphicx}

\usepackage{pdfpages}


\usepackage{cite}
\usepackage{indentfirst}
\usepackage{color}
\usepackage{hyperref}
\hypersetup{
   colorlinks,
   citecolor=black,
   filecolor=black,
   linkcolor=black,
   urlcolor=black
}

\usepackage{blindtext}
\usepackage[utf8]{inputenc}

\theoremstyle{plain}
\newtheorem{thm}{Theorem}[section] 
\newtheorem{exmp}[thm]{Example}
\newtheorem{prop}[thm]{Proposition}

\newtheorem{lem}[thm]{Lemma}

\theoremstyle{definition}
\newtheorem{defn}[thm]{Definition}

\usepackage{amssymb}

\newcommand{\RN}[1]{%
 \textup{\uppercase\expandafter{\romannumeral#1}}%
}

\newcommand{\Addresses}{{
 \bigskip
 \footnotesize

\textsc{Department of Mathematical Sciences, Durham University,
   School Road, Durham, UK,  DH1 3LE}\par\nopagebreak
 \textit{E-mail address}: \texttt{j.m.wilson2@durham.ac.uk} }}
 
 \title{Laurent phenomenon algebras arising from surfaces}
\author{Jon Wilson}
\date{}

\usepackage{fancyhdr}

\fancypagestyle{plain}{
 \fancyhf{}
 \fancyfoot[C]{\thepage}%
}
\fancypagestyle{firstpage}{
 \fancyhf{}
 \fancyfoot[L]{\footnotesize{The author's PhD studies are funded by an EPSRC studentship.}}%
}

\pagestyle{plain}

\usepackage[titles]{tocloft}

\usepackage{cite}

\usepackage{xcolor}
\newcommand*\circled[1]{\kern-2.5em%
  \put(0,4){\color{white}\circle*{18}}\put(-0.5,4){\circle{12}}%
  \put(-3,0){\color{black}\small#1}~~}
\usepackage{enumitem}

\usepackage{float}

\begin{document}

\maketitle

\thispagestyle{firstpage}

\begin{abstract}

It was shown by Fomin, Shapiro and Thurston \cite{fomin2008cluster} that some cluster algebras arise from orientable surfaces. Subsequently, Dupont and Palesi \cite{dupont2015quasi} extended this construction to non-orientable surfaces. We link this framework to Lam and Pylyavskyy's Laurent phenomenon algebras \cite{lam2012laurent}, showing that both orientable and non-orientable unpunctured marked surfaces have an associated LP-algebra.

\end{abstract}

\pagenumbering{arabic}

\tableofcontents

\section{Introduction}

Cluster algebras were introduced by Fomin and Zelevinsky with the intention of understanding a construction of canonical bases by Lustig and Kashiwara. Subsequently it has found deep roots in diverse areas of mathematics including Poisson geometry, integrable systems, quiver representations, polytopes and the theory of surfaces. The cluster algebra itself is a commutative ring defined by a set of generators called \textit{cluster variables}. These cluster variables are grouped into overlapping finite subsets of the same cardinality. Given a cluster there is an idea of \textit{mutation} - this broadly consists of obtaining a new cluster by substituting one of the cluster variables. The \textit{cluster structure} is the combinatorics describing how the clusters are connected via the process of mutation. In the theory of cluster algebras the main focus is usually not the underlying ring, but rather the cluster structure. In practice the set of cluster variables and clusters are not known from the outset. Instead one specifies an initial cluster together with an additional piece of combinatorial data to establish the rules of mutation - in the case of cluster algebras this data is a skew-symmetrizable matrix.  The rest of the clusters are then obtained by repeated employment of mutation.

Fomin and Zelevinsky \cite{fomin2002laurent} proved the remarkable property that every cluster variable in a cluster algebra can be written as a Laurent polynomial in the initial cluster variables. In turn, they settled Gale and Robinson's conjecture on the integrability of generalised Somos sequences, as well as several other like-minded conjectures made by Elkies, Kleber and Propp.
It is the unification of cluster algebras with the caterpillar lemma that resolve these conjectures, but cluster algebras certainly do not capture the generality which the lemma provides. Aimed at extracting the full potential out of the lemma Lam and Pylyavskyy concocted their own much broader cluster structure, which, by design, produces the Laurent phenomenon. As such, they befittingly named this structure the Laurent phenomenon algebra, or LP algebra for short.

Lam and Pylyavskyy discovered in \cite{lam2012laurent} that these LP algebras encompass cluster algebras, and also appear naturally as co-ordinate rings of Lie groups. Subsequently, Gallagher and Stevens \cite{gallagherbroken} demonstrated that their broken Ptolemy algebra exhibits an LP structure. Revealing yet more connections, in this paper, we link Dupont and Palesi's quasi-cluster algebras to LP algebras. Namely, after making a minor tweak to their definition of a quasi-triangulation, see Definition \ref{newarcs} and \ref{newcompatibilitydef}, we prove the following: \newline

\noindent \textbf{Theorem \ref{main}}. \textit{Let $(S,M)$ be an unpunctured (orientable or non-orientable) marked surface. Then the LP cluster complex $\Delta_{LP}(S,M)$ is isomorphic to the quasi-arc complex $\Delta^{\otimes}(S,M)$, and the exchange graph of $\mathcal{A}_{LP}(S,M)$ is isomorphic to $E^{\otimes}(S,M)$.} 

\textit{More explicitly, let $T$ be a quasi-triangulation of $(S,M)$ and $\Sigma_{T}$ its associated LP seed. Then in the LP algebra $\mathcal{A}_{LP}(\Sigma_{T})$ generated by this seed the following correspondence holds:}
\begin{align*}
&\hspace{8mm} \mathbf{\mathcal{A}_{LP}(\Sigma_T)} & & &\mathbf{(S,M)} \hspace{20mm}&  \\ 
&\textit{Cluster variables} &\longleftrightarrow& &\textit{Lambda lengths of quasi-arcs} & \\
&\hspace{8mm}\textit{Clusters}  &\longleftrightarrow& &\textit{Quasi-triangulations} \hspace{7mm}& \\
&\hspace{4mm} \textit{LP mutation}   &\longleftrightarrow&  &\textit{Flips} \hspace{21.5mm}& \\
\end{align*}

Dupont and Palesi's quasi-cluster algebras were an effort of extending the work of Fomin, Shapiro and Thurston \cite{fomin2008cluster} by discovering a cluster structure on non-orientable surfaces. Their setup has seeds consisting of quasi-triangulations, so the current method of generating the algebra demands directly keeping track of flips on the surface. The above theorem places the structure in the realms of LP algebras, providing a purely combinatorial description of the mutation process.

The paper is organised as follows. We begin by recalling the construction of LP algebras and quasi-cluster algebras in chapters 2 and 3, respectively. Chapter 4 makes up the bulk of the paper and is devoted to linking these two structures. Firstly we make a small alteration to the definition of a quasi-cluster algebra as suggested by Pylyavskyy in private communication \cite{pylyavskyy2016}. This change is in keeping with the flavour of cluster algebras and only alters the cluster structure - the underlying ring is not affected by it. Next, by considering the (orientable) double cover of the marked surface $(S,M)$ we restrict our attention to the quasi-triangulations that lift to triangulations, and we consider their adjacency quivers. By using the \textit{anti-symmetric} property of these quivers we show that LP mutation agrees with quasi-cluster mutation when mutating amongst this type of quasi-triangulation. From here, through a case by case check, we show LP and quasi-cluster mutation agree everywhere.

\section*{\large \centering Acknowledgements}
I would like to thank Pavlo Pylyavskyy for suggesting the modified definition of a quasi-cluster algebra used in Section 4, and for many more of his valuable comments. I also wish to thank Anna Felikson and Pavel Tumarkin for countless helpful discussions whilst writing this paper.

\section{Laurent phenomenon algebras}
\label{Lp algebras}

This chapter follows the work of Lam and Pylyavskyy \cite{lam2012laurent}. \newline

Let $R$ be a unique factorisation domain over $\mathbb{Z}$ and let $\mathcal{F}$ be the rational field in $n \geq 1$ independent variables over the field of fractions $Frac(R)$. \newline

A Laurent phenomenon (LP) \textit{\textbf{seed}} in $\mathcal{F}$ is a pair $(\textbf{x}, \textbf{F})$ satisfying the following conditions:

\begin{itemize}

\item $\textbf{x} = \{x_1, \ldots, x_n\}$ is a transcendence basis for $\mathcal{F}$ over $Frac(R)$.

\item $\textbf{F} = \{F_1, \ldots, F_n\}$ is a collection of irreducible polynomials in $R[x_1, \ldots, x_n]$ such that for each $i \in \{1, \ldots, n\}$, $F_i \notin \{x_1, \ldots, x_n\}$; and $F_i$ does not depend on $x_i$ .

\end{itemize} 

Just as in cluster algebras, $\textbf{x}$ is called the \textit{\textbf{cluster}} and $x_1, \ldots, x_n$ the \textit{\textbf{cluster variables}}. $F_1, \ldots, F_n$ are called the \textit{\textbf{exchange polynomials}}. \newline

Recall that a cluster algebra seed of geometric type $(\textbf{x}, B)$ consists of a cluster $\textbf{x}$ and a skew symmetric matrix $B = (b_{ij})$. We can recode this matrix into binomials defined by $F^B_j := \prod_{b_{ij}>0}x_i^{b_{ij}} + \prod_{b_{ij}<0}x_i^{-b_{ij}}$, so there is a strong similarity between the definition of cluster algebra and LP seeds. The key difference being that for LP our exchange relations can be polynomial, not just binomial. However, unlike in cluster algebras, these polynomials are required to be irreducible. \newline
\indent To obtain an \textit\textbf{LP algebra} from our seed we imitate the construction of cluster algebras. Namely, we introduce a notion of mutation of seeds. Our LP algebra will then be defined as the ring generated by all the cluster variables we obtain throughout the mutation process. Before we present the rules of mutation we first need to introduce the idea of normalising exchange polynomials and clarify notation.

\underline{\textbf{Notation:}} \begin{itemize}

\item Let $F, G$ be Laurent polynomials in the variables $x_1, \ldots x_n$. We denote by $F \rvert_{x_j \leftarrow G}$ the expression obtained by substituting $x_j$ in $F$ by the Laurent polynomial $G$.

\item If $F$ is a Laurent polynomial involving a variable $x$ then we write $x \in F$. Likewise, $x \notin F$ indicates that $F$ does not involve $x$.

\end{itemize}

\begin{defn}

Given $\mathbf{F} = \{F_1,\ldots, F_n\}$ then for each $j \in \{1,\ldots,n\}$ we define $\hat{F}_j := \frac{F_j}{x_1^{a_1}\ldots x_{j-1}^{a^{j-1}}x_{j+1}^{a_{j+1}}\ldots x_n^{a_n}}$ where $a_k \in \mathbb{Z}_{\geq 0}$ is maximal such that $F_k^{a_k}$ divides ${F}_j \rvert_{x_k \leftarrow \frac{F_k}{x}}$, as an element of $R[x_1, \ldots, x_{k-1},x^{-1},x_{k+1},\ldots,x_n]$.  The Laurent polynomials of $\mathbf{\hat{F}} := \{\hat{F}_1,\ldots, \hat{F}_n\}$ are called the \textbf{\textit{normalised exchange polynomials}}.
\end{defn}

\begin{exmp}
\label{norm}
Consider the following exchange polynomials in $\mathbb{Z}[a,b,c]$ $$F_a = b+1, F_b = a+c, F_c = (b+1)^2 + a^2b.$$ Since $a$ appears in both $F_b$ and $F_c$ then $\hat{F}_a = F_a$ (see Lemma \ref{welldefined}). Similarly, $\hat{F}_b = F_b$. As $c \in F_b$ then $b \notin \frac{F_c}{\hat{F}_c}$. However, $2$ is the maximal power of $F_a$ that divides $F_{c} \rvert_{a \leftarrow \frac{F_a}{x}}$, so $\hat{F}_c = \frac{F_c}{a^2}$.

\end{exmp}

\begin{defn}

Let $(\mathbf{x}, \mathbf{F})$ be a seed and $i \in \{1,\ldots, n\}$. We define a new seed $\mu_i(\mathbf{x}, \mathbf{F}) := (\{x_1',\ldots,x_n'\},\{F_1',\ldots, F_n'\})$. Here $x_j' = x_j$ for $j \neq i$ and $x_i' = \hat{F}_i/x_i$.  The exchange polynomials change as follows:

\begin{itemize}

\item If $x_i \notin F_j$ then $F_j' := F_j$.

\item If $x_i \in F_j$ then $F_j'$ is obtained by following the 3 step process outlined below.

\begin{description}[align=left]
\item [(Step 1)] Define $G_j := F_j \rvert_{x_i \leftarrow \frac{\hat{F}_i \rvert_{x_j \leftarrow 0}}{x_i'}}$ 
\item [(Step 2)] Define $H_j := (G_j$ with all common factors with $\hat{F}_i \rvert_{x_j \leftarrow 0}$ divided out). I.e. we have $gcd(H_j, \hat{F}_i \rvert_{x_j \leftarrow 0}) =1$.
\item [(Step 3)] Let $M$ be the unique monic Laurent monomial in $R[x_1'^{\pm 1},\ldots, x_n'^{\pm 1}]$ such that $F_j' := H_jM \in R[x_1',\ldots, x_n']$ and is not divisible by any of the variables $x_1', \ldots, x_n'$.
\end{description}

\end{itemize}

The new seed $\mu_i(\mathbf{x}, \mathbf{F})$ is called the \textit{\textbf{mutation}} of $(\mathbf{x}, \mathbf{F})$ in \textit{\textbf{direction $\boldsymbol{i}$}}. It is important to note that because of \textbf{Step 2} the new exchange polynomials are only defined up to a unit in $R$.

\end{defn}

It is certainly not clear a priori that $\mu_k(\mathbf{x}, \mathbf{F})$ will be a valid LP seed due to the irreducibility requirement of the new exchange polynomials. Furthermore, due to the expression $\hat{F}_k \rvert_{x_i \leftarrow 0}$ appearing in \textbf{Step 1} it may not be apparent that the process is even well defined. These issues are resolved by the following two lemmas.

\begin{lem}[Proposition 2.7, \cite{lam2012laurent}]
\label{welldefined}
$x_k \in F_i \implies  x_i \notin \frac{F_k}{\hat{F}_k}$. In particular, $x_k \in F_i$ implies that $\hat{F}_k \rvert_{x_i \leftarrow 0}$ is well defined.

\end{lem}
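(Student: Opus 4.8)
The plan is to unwind the definition of $\hat F_k$ and reduce the claim to a divisibility between genuine polynomials. By definition $\frac{F_k}{\hat F_k} = x_1^{a_1}\cdots x_{k-1}^{a_{k-1}}x_{k+1}^{a_{k+1}}\cdots x_n^{a_n}$, so $x_i \in \frac{F_k}{\hat F_k}$ is equivalent to $a_i \geq 1$, i.e. to $F_i$ dividing $F_k\rvert_{x_i \leftarrow F_i/x}$ in $R[x_1, \ldots, x_{i-1}, x^{-1}, x_{i+1}, \ldots, x_n]$. Thus I must show that whenever $x_k \in F_i$, the polynomial $F_i$ does \emph{not} divide $F_k\rvert_{x_i \leftarrow F_i/x}$.

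First I would perform the substitution explicitly. Writing $F_k = \sum_{m \geq 0} c_m x_i^m$ with each coefficient $c_m$ a polynomial in the remaining variables (in particular free of $x_k$, since $F_k$ does not depend on $x_k$), the substitution gives
$$F_k\rvert_{x_i \leftarrow F_i/x} = \sum_{m\geq 0} c_m \frac{F_i^m}{x^m} = c_0 + F_i\cdot Q,$$
where $Q \in R[x_1,\ldots,x_{i-1},x^{-1},x_{i+1},\ldots,x_n]$ collects the terms with $m\geq 1$ and $c_0 = F_k\rvert_{x_i\leftarrow 0}$. Hence $F_i$ divides $F_k\rvert_{x_i\leftarrow F_i/x}$ if and only if $F_i$ divides the honest polynomial $c_0 = F_k\rvert_{x_i\leftarrow 0}$. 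This is the crucial reduction: it removes the negative power of $x$ and compares $F_i$ with a polynomial living in $R[x_1,\ldots,x_n]$.

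Next I would derive a contradiction from $F_i \mid F_k\rvert_{x_i\leftarrow 0}$ under the hypothesis $x_k \in F_i$. The key observation is that $F_k\rvert_{x_i\leftarrow 0}$ involves neither $x_i$ nor $x_k$, whereas $F_i$ has positive degree in $x_k$. Since $R[x_1,\ldots,x_n]$ is a domain, degrees in $x_k$ are additive across products, so a factorisation $F_k\rvert_{x_i\leftarrow 0} = F_i \cdot g$ would force $0 = \deg_{x_k}(F_k\rvert_{x_i\leftarrow 0}) = \deg_{x_k}(F_i) + \deg_{x_k}(g) \geq 1$, which is absurd. The one remaining possibility is $F_k\rvert_{x_i\leftarrow 0} = 0$, i.e. $x_i \mid F_k$; but $F_k$ is irreducible and is not (an associate of) a cluster variable, so $x_i \nmid F_k$ and $c_0 \neq 0$. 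Therefore $F_i \nmid F_k\rvert_{x_i\leftarrow 0}$, giving $a_i = 0$ and $x_i \notin \frac{F_k}{\hat F_k}$.

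Finally, the ``in particular'' clause follows immediately: since $a_i = 0$, forming $\hat F_k$ does not divide $F_k$ by any power of $x_i$, so $x_i$ occurs in $\hat F_k$ only with non-negative exponent; consequently $\hat F_k\rvert_{x_i\leftarrow 0}$ is a well-defined specialisation rather than an expression with a pole along $x_i = 0$. I expect the main obstacle to be the careful bookkeeping in the reduction step---tracking divisibility inside the Laurent ring $R[\ldots,x^{-1},\ldots]$ and justifying the passage to the genuine polynomial $c_0$---together with cleanly dispatching the degenerate possibility $x_i \mid F_k$ so that the degree-counting argument in $x_k$ applies.
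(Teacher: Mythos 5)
Your proof is correct. The paper does not actually prove this lemma---it is quoted verbatim from Lam and Pylyavskyy (Proposition 2.7 of \cite{lam2012laurent})---and your argument is essentially the original one: reduce $F_k \rvert_{x_i \leftarrow F_i/x} \equiv F_k\rvert_{x_i \leftarrow 0} \pmod{F_i}$, compare $x_k$-degrees (using that $F_k$, hence $c_0$, is free of $x_k$ while $x_k \in F_i$), and dispose of the degenerate case $x_i \mid F_k$ via irreducibility of $F_k$ together with the seed axiom excluding (associates of) cluster variables.
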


\begin{lem}[Proposition 2.15, \cite{lam2012laurent}]

$F_i'$ is irreducible in $R[x_1',\ldots, x_n']$ for all $i \in \{1,\ldots,n\}$. In particular, $\mu_k(\mathbf{x}, \mathbf{F})$ is a valid LP seed.

\end{lem}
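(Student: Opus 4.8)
The plan is to show two things about the mutated seed $\mu_i(\mathbf{x},\mathbf{F}) = (\{x_1',\ldots,x_n'\},\{F_1',\ldots,F_n'\})$: first that each $F_i'$ is irreducible in $R[x_1',\ldots,x_n']$, and second that this suffices to conclude $\mu_i(\mathbf{x},\mathbf{F})$ satisfies all the axioms of an LP seed. The second part is the easier bookkeeping: I would check that $\{x_1',\ldots,x_n'\}$ remains a transcendence basis for $\mathcal{F}$ over $Frac(R)$ (this is immediate since $x_i' = \hat{F}_i/x_i$ is a rational expression involving $x_i$ with the other variables unchanged, so the change of variables is invertible), and that each $F_j'$ neither lies in $\{x_1',\ldots,x_n'\}$ nor depends on $x_j'$. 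The non-dependence on $x_j'$ should fall out of the construction in Steps 1--3, since $F_j$ does not depend on $x_j$ to begin with and the substitution in Step 1 only replaces $x_i$; I would track the variable $x_j'$ through each step carefully. The fact that $F_j' \notin \{x_1',\ldots,x_n'\}$ will follow once irreducibility is established together with degree considerations.

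\emph{The main obstacle} is the irreducibility of $F_j'$, so I would concentrate my effort there. The natural strategy is to exploit that $F_j'$ is obtained from $F_j$ (which is irreducible by the seed axioms) via a sequence of operations --- a substitution $x_i \leftarrow \hat{F}_i\rvert_{x_j\leftarrow 0}/x_i'$, removal of common factors with $\hat{F}_i\rvert_{x_j\leftarrow 0}$, and clearing of denominators by a monic Laurent monomial. The key point is that the substitution in Step 1 is, up to the monomial normalisation, an \emph{invertible} change of variable: one can recover $x_i$ from $x_i'$ via $x_i = \hat{F}_i\rvert_{x_j\leftarrow 0}/x_i'$ (using Lemma \ref{welldefined} to ensure $\hat{F}_i\rvert_{x_j\leftarrow 0}$ is a well-defined nonzero element not involving $x_i$ or $x_j$). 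I would argue that such an automorphism of the Laurent polynomial ring, which sends irreducibles to irreducibles, carries the irreducible $F_j$ to something that factors as $F_j'$ times units and powers of the variables being cleared. The irreducibility of $\hat{F}_i\rvert_{x_j\leftarrow 0}$ itself (inherited from the irreducibility of $F_i$ and the definition of $\hat{F}_i$) plays a crucial role in controlling which factors can appear.

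Concretely, I would set up the ring homomorphism $\varphi$ on $R[x_1,\ldots,x_n]$ (or its appropriate Laurent localisation) determined by $x_i \mapsto \hat{F}_i\rvert_{x_j\leftarrow 0}/x_i'$ and $x_k \mapsto x_k$ for $k \neq i$, and verify it is an isomorphism onto its image by exhibiting the inverse. Applying $\varphi$ to the irreducible $F_j$ yields $G_j$, and I would then show that the only possible non-unit factors of $G_j$ in the Laurent ring are powers of the variables $x_1',\ldots,x_n'$ and factors of $\hat{F}_i\rvert_{x_j\leftarrow 0}$; Steps 2 and 3 strip precisely these off in a canonical way, leaving an irreducible remainder $F_j'$. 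The delicate technical point --- and where I expect the real work to lie --- is proving that the greatest-common-divisor cancellation in Step 2 and the monomial normalisation in Step 3 cannot inadvertently split an irreducible factor, i.e.\ that factoring commutes appropriately with these operations. This requires carefully distinguishing the prime $\hat{F}_i\rvert_{x_j\leftarrow 0}$ from the images of the other primes and using that $R[x_1',\ldots,x_n']$ is a unique factorisation domain so that factorisations into irreducibles are essentially unique.

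Since this is precisely Proposition 2.15 of Lam and Pylyavskyy \cite{lam2012laurent}, the cleanest route is to cite their argument directly; the sketch above records the conceptual skeleton (invertible substitution plus UFD-factorisation control) that their proof makes rigorous.
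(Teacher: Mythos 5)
The paper contains no proof of this lemma at all: it is imported verbatim as Proposition~2.15 of \cite{lam2012laurent}, so your closing recommendation to cite Lam and Pylyavskyy is exactly the paper's treatment, and your sketch can only be measured against the source. Its skeleton is the right one --- the substitution $x_i \leftarrow \hat{F}_i\rvert_{x_j \leftarrow 0}/x_i'$ becomes invertible after localising the Laurent ring, and unique factorisation in that localisation controls what Steps 2 and 3 can strip off --- and your bookkeeping for the remaining seed axioms is essentially fine, except that $F_j' \notin \{x_1',\ldots,x_n'\}$ needs no ``degree considerations'': Step 3 explicitly makes $F_j'$ not divisible by any variable, which already excludes $F_j' = x_k'$.

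However, one step of your sketch as written would fail: you assert that $\hat{F}_i\rvert_{x_j \leftarrow 0}$ is irreducible, ``inherited from the irreducibility of $F_i$,'' and later speak of ``the prime $\hat{F}_i\rvert_{x_j \leftarrow 0}$.'' This is false in general: specialising an irreducible polynomial at $x_j = 0$ can produce a reducible element, a nontrivial monomial, or a unit. The present paper supplies counterexamples on both ends --- in Example~\ref{norm} one has $\hat{F}_a\rvert_{b \leftarrow 0} = 1$, while the proof of Proposition~\ref{quivermut} hinges precisely on $F_i^Q\rvert_{x_j \leftarrow 0}$ being the monomial $\prod_{b_{ki}+b_{\tilde{k}i}>0} x_k^{b_{ki}+b_{\tilde{k}i}}$, which is as far from prime as can be. Lemma~\ref{welldefined} guarantees only that the specialisation is well defined, not that it is irreducible. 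The repair, which is how the argument of \cite{lam2012laurent} actually runs, is to invert the single element $\hat{F}_i\rvert_{x_j \leftarrow 0}$ along with the variables, so that the units of the localised ring are exactly $R$-units times Laurent monomials times divisors of powers of $\hat{F}_i\rvert_{x_j \leftarrow 0}$; under the resulting automorphism, $G_j$, $H_j$ and $F_j'$ are all associates of the image of the irreducible $F_j$, and a hypothetical factorisation $F_j' = fg$ into non-units of $R[x_1',\ldots,x_n']$ forces one factor to be a unit of the localisation, which the coprimality $\gcd(H_j, \hat{F}_i\rvert_{x_j \leftarrow 0}) = 1$ from Step 2, together with non-divisibility by the variables from Step 3, rules out. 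Primality of $\hat{F}_i\rvert_{x_j \leftarrow 0}$ is neither available nor needed; with that substitution your outline matches the cited proof.
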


\begin{exmp}

We will perform mutation $\mu_{a}$ at $a$ on the LP seed $$(\{a,b,c\},\{F_a = b+1, F_b = a+c, F_c = (b+1)^2 + a^2b\}).$$ Recall from Example \ref{norm} that $\hat{F}_a = F_a$. Both $F_b$ and $F_c$ depend on $a$ so we are required to apply the 3 step process on each of them. We shall denote the new variable $a' := \frac{\hat{F}_a}{a}$ by $d$. \newline
$$G_b = F_b \rvert_{a \leftarrow \frac{\hat{F_a} \rvert_{b \leftarrow 0}}{d}} = F_b \rvert_{a \leftarrow \frac{1}{d}} = \frac{c}{d} + 1.$$ Nothing happens at Step 2 since $\hat{F_a} \rvert_{b \leftarrow 0} = 1$. Multiplying by the monomial $d$ gives us our new exchange polynomial $F_b' = c + d$. \newline
$$G_c = F_c \rvert_{a \leftarrow \frac{\hat{F_a} \rvert_{c \leftarrow 0}}{d}} = F_c \rvert_{a \leftarrow \frac{b+1}{d}} = (b+1)^2 + \frac{(b+1)^2b}{d^2}.$$ Following Step 2 we divide $G_c$ by any of its common factors with $\hat{F_a} \rvert_{c \leftarrow 0} = b+1$. This leaves us with $H_c = 1 + \frac{b}{d^2}$. Finally, multiplying by the monomial $d^2$ gives us our new exchange polynomial $F_c' = d^2 + b$. \newline
Hence, our new LP seed is $$(\{d,b,c\},\{F_d = b+1, F_b = c+d, F_c = d^2 + b\}).$$

\end{exmp}

Recall that mutation in cluster algebras is an involution. In the LP setting, because mutation of exchange polynomials is only defined up to a unit in $R$, it is clear we can't say precisely the same thing for LP mutation. Nevertheless, we do have the following analogue.

\begin{prop}[Proposition 2.16, \cite{lam2012laurent}]

If $(\mathbf{x}',\mathbf{F}')$ is obtained from $(\mathbf{x},\mathbf{F})$ by mutation at $i$, then $(\mathbf{x},\mathbf{F})$ can be obtained from $(\mathbf{x}',\mathbf{F}')$ by mutation at $i$. It is in this sense that LP mutation is an involution.
\end{prop}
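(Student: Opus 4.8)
The plan is to show that if $(\mathbf{x}', \mathbf{F}')$ is obtained from $(\mathbf{x}, \mathbf{F})$ by mutation at $i$, then mutating $(\mathbf{x}', \mathbf{F}')$ at $i$ recovers $(\mathbf{x}, \mathbf{F})$ up to the unavoidable ambiguity of units in $R$. First I would handle the cluster variables, which is the easy half. Since $x_i' = \hat{F}_i/x_i$ and the normalised polynomial $\hat{F}_i$ does not depend on $x_i$, I would verify that $\hat{F}_i' = \hat{F}_i$, i.e. that the normalisation of the $i$-th exchange polynomial is unchanged under mutation at $i$. Granting this, the new $i$-th variable obtained from the second mutation is $\hat{F}_i'/x_i' = \hat{F}_i/(\hat{F}_i/x_i) = x_i$, recovering the original cluster. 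For $j \neq i$ the variables are untouched by either mutation, so the cluster is restored on the nose.

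The substantive part is the exchange polynomials. For indices $j$ with $x_i \notin F_j$ the mutation leaves $F_j$ fixed and $x_i'$ does not enter, so these return unchanged; thus I would concentrate on the $j$ with $x_i \in F_j$. The strategy is to track the three-step process in both directions and show the composite is the identity modulo units. I would start from $F_j' = H_j M$ where $H_j$ is $G_j = F_j|_{x_i \leftarrow (\hat{F}_i|_{x_j\leftarrow 0})/x_i'}$ with common factors with $\hat{F}_i|_{x_j \leftarrow 0}$ removed, and $M$ the monomial clearing denominators. Applying Step 1 of the reverse mutation substitutes $x_i' \leftarrow (\hat{F}_i'|_{x_j \leftarrow 0})/x_i$ into $F_j'$; using $\hat{F}_i' = \hat{F}_i$ and the relation $x_i' = \hat{F}_i/x_i$, the two substitutions should compose to reinstate the original variable $x_i$ inside $F_j$. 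The key algebraic identity to verify is that substituting $(\hat{F}_i|_{x_j\leftarrow 0})/x_i'$ and then $(\hat{F}_i|_{x_j\leftarrow 0})/x_i$ back in returns $x_i$, which it does precisely because the inner monomial $\hat{F}_i|_{x_j \leftarrow 0}$ is the same in both directions.

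The main obstacle I anticipate is bookkeeping Steps 2 and 3 carefully, since these introduce the unit ambiguity and it is exactly here that the statement must be hedged as an involution "up to a unit." Concretely, I would need to argue that the common factors divided out in the forward Step 2 are restored when we clear denominators in the reverse Step 3, and vice versa, so that no spurious factors of $\hat{F}_i|_{x_j\leftarrow 0}$ or of the cluster variables persist. This requires knowing that $\hat{F}_i|_{x_j \leftarrow 0}$ is irreducible (or at least that its prime factorisation is controlled), which follows from the irreducibility of the exchange polynomials together with Lemma \ref{welldefined}; the latter guarantees $\hat{F}_i|_{x_j \leftarrow 0}$ is well defined whenever $x_i \in F_j$. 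I would also invoke the uniqueness of the monic clearing monomial $M$ in Step 3 to pin down $F_j''$ uniquely up to a unit. Assembling these, the forward and reverse three-step processes cancel, the monomials match, and we recover $F_j$ up to multiplication by a unit in $R$, which is all that can be asked given the definition of mutation. This yields the claimed involutivity.
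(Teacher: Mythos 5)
The paper itself does not prove this statement---it is imported verbatim as Proposition 2.16 of \cite{lam2012laurent}---so the benchmark is Lam and Pylyavskyy's proof, whose skeleton your outline correctly reproduces: $F_i' = F_i$, the identity $\hat{F}_i' = \hat{F}_i$, the observation that the two Step 1 substitutions compose to the identity on $x_i$, and control of Steps 2--3 via irreducibility and uniqueness of the clearing monomial. However, two of your supporting claims are wrong. The assertion that $\hat{F}_i \rvert_{x_j \leftarrow 0}$ is irreducible, and that this ``follows from the irreducibility of the exchange polynomials together with Lemma~\ref{welldefined}'', is false: Lemma~\ref{welldefined} only guarantees that the specialisation is well defined, and specialising an irreducible polynomial generally destroys irreducibility. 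For instance, for the diagonal of a triangulated quadrilateral one has $F_i = x_jx_k + x_ax_b$ with $\hat{F}_i = F_i$, so $h := \hat{F}_i\rvert_{x_j \leftarrow 0} = x_ax_b$ is reducible (it is even divisible by cluster variables), and $h$ can just as well be a pure monomial or $1$. Second, your bookkeeping mechanism is misstated: the factors removed in the forward Step 2 are \emph{not} ``restored when we clear denominators in the reverse Step 3''---Step 3 multiplies only by a Laurent \emph{monomial} in the cluster variables, so it can never reinstate a non-monomial factor of $h$.

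What actually closes the argument, and needs no irreducibility of $h$, is the following. Write $F_j = \sum_{d=0}^{D} c_d x_i^d$. Forward mutation gives $F_j' = (m/g)\sum_{d} c_d h^d x_i'^{\,D-d}$, where $g$ is the factor shared with $h$ that Step 2 removes and $m$ is a Laurent monomial in the remaining variables; the reverse Step 1 substitution $x_i' \leftarrow h/x_i$ then yields $(mh^D/g)\,x_i^{-D}F_j$. So the reverse Step 1 \emph{automatically} reintroduces $h^D$, and it is the reverse Step 2 that strips the surplus $h^D/g$; all one needs is $\gcd(F_j, h) = 1$, which holds because $F_j$ is irreducible and involves $x_i$ while $h$ does not, after which the uniqueness of the Step 3 monomial returns $F_j$ up to a unit in $R$, exactly the hedge the statement allows. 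Finally, note that $\hat{F}_i' = \hat{F}_i$ is not a routine ``verification'': the normalisation of $F_i' = F_i$ is computed with respect to the mutated family $\mathbf{F}'$, so one must show the normalising exponents $a_k$ are unchanged even when $F_k' \neq F_k$; this is where Lemma~\ref{welldefined} genuinely enters. In your write-up this identity is asserted rather than proved, yet it underpins both halves of your argument---the recovery of $x_i$ and the equality of the substitution monomials $h$ in the two directions---so as it stands the proposal has a genuine gap there, in addition to the two incorrect justifications above.
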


\begin{defn}

A \textbf{\textit{Laurent phenomenon algebra}} $(\mathcal{A}, \mathcal{S})$ consists of a collection of seeds $\mathcal{S}$, and a subring $\mathcal{A} \subset \mathcal{F}$ that is generated by all the cluster variables appearing in the seeds of $\mathcal{S}$. The collection of seeds must be connected and closed under mutation. More formally, $\mathcal{S}$ is required to satisfy the following conditions: \begin{itemize}

\item Any two seeds in $\mathcal{S}$ are connected by a sequence of LP mutations.
\item$\forall$ $(\mathbf{x},\mathbf{F}) \in \mathcal{S}$ $\forall i \in \{1,\ldots,n\}$ there is a seed $(\mathbf{x}',\mathbf{F}') \in \mathcal{S}$ that can be obtained by mutating $(\mathbf{x},\mathbf{F})$ at $i$.

\end{itemize}

\end{defn}

\begin{defn}[Subsection 3.6, \cite{lam2012laurent}]

The \textit{\textbf{cluster complex}} $\Delta_{LP}(\mathcal{A})$ of an LP algebra $\mathcal{A}$ is the simplicial complex with the ground set being the cluster variables of $\mathcal{A}$, and the maximal simplices being the clusters.

\end{defn}

\begin{defn}[Subsection 3.6, \cite{lam2012laurent}]

The \textit{\textbf{exchange graph}} of an LP algebra $\mathcal{A}$ is the graph whose vertices correspond to the clusters of $\mathcal{A}$. Two vertices are connected by an edge if their corresponding clusters differ by a single mutation.

\end{defn}

\section{Quasi-cluster algebras}
\label{quasi}

This chapter follows the work of Dupont and Palesi \cite{dupont2015quasi}. \newline

Let $S$ be a compact $2$-dimensional manifold with boundary $\partial S \neq \varnothing$. Fix a finite set $M$ of marked points in $\partial S$ such that each boundary component contains at least one marked point. The tuple $(S,M)$ is called a \textit{\textbf{bordered surface}}. We wish to exclude cases where $(S,M)$ does not admit a triangulation. As such, we do not allow $(S,M)$ to be a monogon, digon or a triangle. \newline

\noindent \textbf{Remark}: Note that the definition is almost identical to that given in \cite{fomin2008cluster}. The differences here are that punctures are forbidden, and now $S$ can be non-orientable. We omit punctured surfaces from the outset because their associated (quasi)-cluster structure has no LP structure. We conclude Chapter 4 by discussing why this is the case. \newline

To imitate the construction of cluster algebras arising from orientable surfaces we must first agree on which curves will form our notion of 'triangulation'.

\begin{defn}

An \textit{\textbf{arc}} of $(S,M)$ is a simple curve in $S$ connecting two (not necessarily distinct) marked points of $M$. 

\end{defn}

\begin{defn}

A simple closed curve in $S$ is said to be \textit{\textbf{two-sided}} if it emits a regular neighbourhood which is orientable. Otherwise, it is said to be \textit{\textbf{one-sided}}.

\end{defn}

\begin{defn}

A \textit{\textbf{quasi-arc}} is either an arc or a one-sided closed curve. Throughout this paper we shall always consider quasi-arcs up to isotopy. Let $A^\otimes(S,M)$ denote the set of all quasi-arcs (considered up to isotopy).

\end{defn}

Recall that a closed non-orientable surface is homeomorphic to the connected sum of $k$ projective planes $\mathbb{R}P^2$. Such a surface is said to have (non-orientable) genus $k$. A \textit{\textbf{cross-cap}} is a cylinder where antipodal points on one of the boundary components are identified. In particular, note that a cross-cap is homeomorphic to $\mathbb{R}P^2$ with an open disk removed. An illustration of a cross cap in given in Figure \ref{crosscap} - throughout this paper we shall always represent it in this way.
For pictorial convenience we use the following alternative description: A compact non-orientable surface of genus $k$ (with boundary) is homeomorphic to a sphere where more than $k$ open disks are removed, and $k$ of them have been replaced with cross-caps.

\begin{figure}[H]
\centering
\includegraphics[width=30mm]{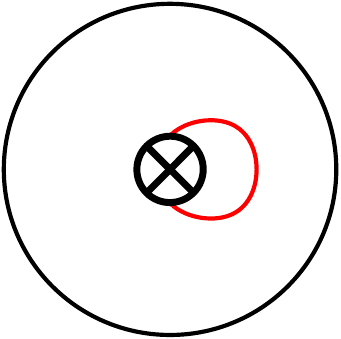}
\caption{A picture of a crosscap together with a one-sided closed curve.}
\label{crosscap}
\end{figure}

\begin{defn}

Two quasi-arcs of $(S,M)$ are called \textit{\textbf{compatible}} if there exists representatives in their respective isotopy classes that do not intersect in the interior of $S$.

\end{defn}

\begin{defn}

A \textit{\textbf{quasi-triangulation}} of $(S,M)$ is a maximal collection of pairwise compatible quasi-arcs of $(S,M)$.

\end{defn}

\noindent \textbf{Remark}: After putting a hyperbolic metric on $(S,M)$ we need only ever consider the geodesic representatives of quasi-arcs due to the fact that quasi-arcs are compatible \textit{if and only if} their geodesic representatives do not intersect.

\begin{prop}[Proposition 2.4, \cite{dupont2015quasi}]
\label{flip}
Let $T$ be a quasi-triangulation of $(S,M)$. Then for any $\gamma \in T$ there exists a unique $  \gamma' \in A^\otimes(S,M)$ such that $\gamma' \neq \gamma$ and $\mu_{\gamma}(T) := T\setminus\{\gamma\}\cup \gamma'$ is a quasi-triangulation.

\end{prop}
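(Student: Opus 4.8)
The plan is to reduce the statement to a purely local analysis around $\gamma$, mirroring the classical proof of flips for ideal triangulations of orientable surfaces but accounting for the new phenomena caused by one-sided curves and cross-caps. I would work throughout with geodesic representatives (as in the Remark preceding the proposition), so that the quasi-arcs of $T$ are pairwise disjoint and cut $(S,M)$ into well-defined faces.

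First I would cut $(S,M)$ along every quasi-arc of $T \setminus \{\gamma\}$. This produces a (possibly disconnected) surface whose pieces are the faces of the partial quasi-triangulation $T \setminus \{\gamma\}$; the quasi-arc $\gamma$ survives as an essential quasi-arc inside exactly one piece $P$. The crucial point is that, because $T$ is a quasi-triangulation (maximal) whereas $T \setminus \{\gamma\}$ is not, the piece $P$ must be an ``elementary'' surface in which $\gamma$ is one of only two essential quasi-arcs up to isotopy, while every other piece is already a cell admitting no further quasi-arc. Establishing this dichotomy --- that removing a single quasi-arc opens up precisely one elementary region --- is the structural heart of the argument, and should follow from maximality together with the fact that a quasi-triangulation carries a fixed finite number of quasi-arcs.

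Next I would classify the elementary pieces $P$. In the orientable (arc) case $P$ is a quadrilateral, glued from the two faces flanking $\gamma$; here $\gamma$ is a diagonal and the unique other quasi-arc $\gamma'$ is the opposite diagonal, recovering the usual flip. In the non-orientable setting two new configurations appear: $\gamma$ may be an arc bounding a M\"obius strip carrying a single marked point, or $\gamma$ may itself be the one-sided core curve of such a strip. I would show that a M\"obius strip with one marked point on its boundary supports exactly two essential quasi-arcs --- the one-sided core curve and a distinguished arc --- and that the flip interchanges them, this piece playing the role that the self-folded triangle plays in the punctured orientable theory. Enumerating these pieces and checking that no other subsurface admits exactly two quasi-arcs then yields both the existence of $\gamma'$ and, since $P$ carries only one quasi-arc besides $\gamma$, its uniqueness.

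The main obstacle I anticipate is the non-orientable case: verifying that the list of elementary pieces is complete and correctly describing the second quasi-arc in the M\"obius configuration. This demands a careful count of isotopy classes of arcs and one-sided curves inside a once-marked M\"obius strip (a genuinely non-orientable computation), and ruling out larger pieces --- such as an annulus with one marked point on each boundary component, which would admit infinitely many winding arcs --- by invoking the maximality of $T$. Once the local classification is secured the conclusion is immediate, since the construction of $\gamma'$ takes place entirely within the single region $P$ and leaves every other quasi-arc of $T$ untouched.
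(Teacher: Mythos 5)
Your overall strategy---pass to geodesic representatives, cut along $T\setminus\{\gamma\}$, show every piece except one is a triangle, and classify the remaining piece $P$ as an elementary region supporting exactly two quasi-arcs---is the right one, and it is essentially the route taken in the source: note that the paper gives no proof of its own here, quoting the statement as Proposition 2.4 of \cite{dupont2015quasi}, where the argument is likewise a local classification of the configurations around $\gamma$. Your handling of the quadrilateral case and of the once-marked M\"obius strip $M_1$ (with the arc $\alpha_\gamma$ and the one-sided core $\beta_\gamma$ interchanged by the flip, as in Figure \ref{newcompatibility}) is correct, and you rightly identify the count of isotopy classes in $M_1$ as the genuinely non-orientable input. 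One caveat even in the cases you do treat: since the proposition is stated for the Section 3 (unmodified) definition, the ``two faces flanking $\gamma$'' may include an anti-self-folded triangle (Figure \ref{antiself}), so the quadrilateral case must be allowed to degenerate accordingly---this is exactly the configuration of Figure \ref{badflip}.

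The genuine gap is that your list of elementary pieces is incomplete: it omits flip type (3) of Definition \ref{lambdarules}, in which $T$ contains a one-sided closed curve $c$ and $\gamma$ is the arc enclosing it, i.e.\ the boundary of the M\"obius strip $M_1\ni c$. Such a $\gamma$ is flippable (to the enclosing arc based at the other marked point of the adjacent triangle), but the piece $P$ is neither a quadrilateral nor $M_1$: cutting along $T\setminus\{\gamma\}$ includes cutting along the one-sided curve $c$, which yields precisely an annulus-type piece with an unmarked boundary circle (the orientation double of $c$)---the very kind of region your sketch proposes to \emph{rule out} on the grounds that it admits infinitely many winding arcs. It cannot be ruled out, because it occurs; instead it must be handled by counting isotopy classes downstairs in the M\"obius strip $M_2$ rather than in the cut piece: the winding arcs visible in the cut annulus are all isotopic in $M_2$ (the mapping class group of the M\"obius band is trivial), and exactly two arcs are disjoint from $c$, namely the two enclosing arcs, which gives existence and uniqueness in this case as well. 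Separately, avoid justifying your structural dichotomy via ``a quasi-triangulation carries a fixed finite number of quasi-arcs'': in the paper's development the invariance of this number (the rank) is \emph{deduced} from Propositions \ref{flip} and \ref{flipconnected}, so invoking it here is circular; maximality of $T$ is what you actually have available, and it suffices.
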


Here $\mu_{\gamma}(T)$ is called the \textit{\textbf{quasi-mutation}} of $T$ in \textit{\textbf{direction \boldmath$\gamma$}}, and $\gamma'$ is called the \textit{\textbf{flip}} of $\gamma$ with \textit{\textbf{respect to $\mathbf{T}$}}. The \textit{\textbf{flip graph}} of a bordered surface $(S,M)$ is the graph with vertices corresponding to quasi-triangulations and edges corresponding to flips.

\begin{prop}[Prop. 2.12, \cite{dupont2015quasi}]
\label{flipconnected}
The flip graph of $(S,M)$ is connected.

\end{prop}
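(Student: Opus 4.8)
The plan is to prove connectivity by induction on the complexity of $(S,M)$, where I measure complexity by the number of quasi-arcs in a quasi-triangulation together with the number of cross-caps. The inductive engine is the classical ``cut along a common quasi-arc'' technique: if two quasi-triangulations $T$ and $T'$ can each be flipped so as to share a quasi-arc $\gamma$, then cutting $(S,M)$ along $\gamma$ produces a surface (or pair of surfaces) of strictly smaller complexity, onto which $T$ and $T'$ restrict as quasi-triangulations. Every flip performed on the cut surface lifts to a flip of $(S,M)$ that fixes $\gamma$, so the inductive hypothesis connects the restrictions and hence $T$ and $T'$ themselves.

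The core of the argument is therefore to show that any two quasi-triangulations can be made to share a quasi-arc after finitely many flips. I would fix a reference quasi-triangulation $T_0$ and show that an arbitrary $T$ can be flipped to contain a prescribed $\gamma \in T_0$. Working with geodesic representatives (as justified in the Remark following the definition of a quasi-triangulation), I would track the total geometric intersection number $\sum_{\delta \in T}\lvert \gamma \cap \delta \rvert$ and argue that, whenever it is positive, there is an innermost crossing configuration -- a quasi-arc $\delta \in T$ cutting off a triangle or bigon with $\gamma$ -- on which the flip $\mu_{\delta}(T)$ strictly decreases the intersection number. Iterating drives the intersection number to $0$, which by maximality and pairwise compatibility forces $\gamma \in T$.

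The main obstacle is the genuinely non-orientable behaviour: one-sided closed curves and crossings occurring inside a cross-cap. Unlike the orientable case, a flip need not be the familiar quadrilateral diagonal exchange -- by Proposition \ref{flip} the flip of a one-sided closed curve replaces it with an arc bounding its M\"obius neighbourhood -- so the intersection-reducing step must be verified separately in each local model: an arc of $T$ crossing a two-sided arc around a cross-cap, an arc crossing a one-sided curve, and configurations in which the reference arc $\gamma$ is itself one-sided. My strategy here is to first flip away every one-sided curve of $T$ that does not already appear in $T_0$, thereby reducing the number of cross-caps in play and locally restoring the orientable picture, after which the standard innermost-triangle reduction applies.

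Finally, I would dispatch the base cases -- small polygons, the once-marked M\"obius strip and annulus, and the other minimal surfaces admitting no reducing cut -- by directly enumerating their quasi-triangulations and checking connectivity of the flip graph by hand. Combined with the inductive step, this establishes connectivity in general.
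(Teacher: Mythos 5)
You should first note that the paper contains no proof of this statement at all: it is imported verbatim as Proposition 2.12 of Dupont and Palesi \cite{dupont2015quasi}, so the only meaningful comparison is with their argument. Your overall skeleton (first flip every one-sided closed curve of $T$ to an arc so as to land among triangulations, then force a prescribed quasi-arc $\gamma$ into $T$ by flips that reduce the geometric intersection number) is indeed the strategy of the cited proof. However, your inductive engine has a concrete failure: you propose to cut $(S,M)$ along a shared quasi-arc $\gamma$, but when $\gamma$ is a one-sided closed curve the cut surface acquires a boundary circle containing no marked points, which is excluded from the class of bordered surfaces considered here (every boundary component must carry at least one marked point). The cut surface then admits no quasi-triangulation in this framework, $T$ and $T'$ do not ``restrict'', and the induction step collapses. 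This is repairable --- by maximality any quasi-triangulation containing a one-sided curve also contains the arc bounding its M\"obius neighbourhood, so one may cut along that arc instead, or simply iterate the ``force $\gamma$ into $T$'' step over the arcs of $T_0$ one at a time (arcs already acquired are never flipped, since they are disjoint from the next target), which removes the need for cutting altogether --- but as written the step fails.

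The second and more serious gap is that your load-bearing lemma --- whenever $\sum_{\delta \in T}\lvert \gamma \cap \delta\rvert > 0$ some flip strictly decreases it --- is asserted rather than proved, and this is precisely where the non-orientable difficulty lives. The flip of a one-sided curve $\beta \in T$ crossed by $\gamma$ is forced (Proposition \ref{flip}) to be the arc wrapping the cross-cap inside the enclosing M\"obius strip, and a strand of $\gamma$ passing through that region can meet the new quasi-arc at least as often as it met $\beta$; the same issue arises for an arc bounding a M\"obius strip with one marked point, whose flip is the enclosed one-sided curve. So strict monotonicity of the naive count can stall in exactly the local models you list, and one needs either a finer complexity measure or an ordered, multi-flip reduction through these configurations. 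Saying the reduction ``must be verified separately in each local model'' names the problem without solving it; since that case analysis is essentially the entire content of the proposition in the non-orientable setting, your proposal is a correct outline of the known approach rather than a complete proof.
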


Note that Propositions \ref{flip} and \ref{flipconnected} tell us that the number of quasi-arcs in a quasi-triangulation is an invariant of $(S,M)$ - this number is called the \textit{\textbf{rank}} of $(S,M)$.
\newline

\indent We now introduce the notion of a seed of a bordered surface $(S,M)$.

\subsection*{Quasi-seeds and mutation.}

Suppose $(S,M)$ is a bordered surface of rank $n$ and let $b_1,\ldots, b_m$ consist of all the boundary segments of $(S,M)$. Denote $\mathcal{F}$ as the rational field of functions in $n+m$ independent variables over $\mathbb{Q}$.

A \textit{\textbf{quasi-seed}} of a bordered surface $(S,M)$ in $\mathcal{F}$ is a pair $(\mathbf{x},T)$ such that:

\begin{itemize}

\item $T$ is a quasi-triangulation of $(S,M)$.

\item $\mathbf{x} := \{x_{\gamma} | \gamma \in T\}$ is algebraically independent in $\mathcal{F}$ over $\mathbb{ZP} := \mathbb{Z}[x_{b_1},\ldots,x_{b_m}]$.

\end{itemize}

We call $\mathbf{x}$ the \textit{\textbf{quasi-cluster}} of $(\mathbf{x},T)$ and the variables themselves are called \textit{\textbf{quasi-cluster variables}}. \newline

To define a \textit{(quasi)-cluster structure} on $(S,M)$ we shall consider the \textit{decorated Teichm\"uller space}, $\tilde{\mathcal{T}}(S,M)$, as introduced by Penner \cite{penner2012decorated}. An element of $\tilde{\mathcal{T}}(S,M)$ consists of a complete finite-area hyperbolic structure of constant curvature $-1$ on $S\setminus M$ together with a collection of horocycles, one around each marked point. Fixing a decorated hyperbolic structure $\sigma \in \tilde{\mathcal{T}}(S,M)$, Dupont and Palesi \cite{dupont2015quasi} defined the notion of the \textit{hyperbolic length}, $\lambda_{\sigma}(\gamma)$, of a quasi-arc $\gamma$ in $(S,M)$. More explicitly, this measures the length of $\gamma$ between the horocycles at its endpoints. The \textit{lambda length}, $\lambda({\gamma})$, of a quasi-arc $\gamma$ is the evaluation map on $\tilde{\mathcal{T}}(S,M)$ sending decorated hyperbolic structures $\sigma$ to $\lambda_{\sigma}(\gamma)$. They proved the following result about lambda lengths in a fixed quasi-triangulation.

\begin{thm}[Theorem 4.2, \cite{dupont2015quasi}]

For any quasi-triangulation $T$ with quasi and boundary arcs $\gamma_1, \ldots, \gamma_{n+b}$ there exists a homeomorphism 

 \begin{align*} 
\Lambda_T \colon   \tilde{\mathcal{T}}(S & ,M) \longrightarrow \mathbb{R}_{>0}^{n+c} \\
          &\sigma \mapsto (\lambda_{\sigma}(\gamma_1), \ldots, \lambda_{\sigma}(\gamma_{n+b}))
\end{align*} 

\end{thm}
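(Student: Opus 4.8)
The plan is to establish that $\Lambda_T$ is a homeomorphism by producing a continuous inverse, reducing the non-orientable case to Penner's classical parametrisation of decorated Teichm\"uller space for orientable surfaces. First I would check the easy direction: each lambda length $\lambda(\gamma_i)$ is a strictly positive continuous function on $\tilde{\mathcal{T}}(S,M)$, so $\Lambda_T$ is a well-defined continuous map into $\mathbb{R}_{>0}^{n+c}$. The substance is the continuity and bijectivity of the inverse.

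The central device is the orientation double cover $\pi \colon (\tilde S,\tilde M) \to (S,M)$, an orientable marked surface carrying a fixed-point-free, orientation-reversing involution $\tau$ (the nontrivial deck transformation) with $(\tilde S,\tilde M)/\tau = (S,M)$. A decorated hyperbolic structure on $(S,M)$ pulls back to a $\tau$-invariant one on $(\tilde S,\tilde M)$, and conversely every $\tau$-invariant decorated structure descends; this identifies $\tilde{\mathcal{T}}(S,M)$ with the fixed locus $\tilde{\mathcal{T}}(\tilde S,\tilde M)^{\tau}$, and one checks this identification is a homeomorphism onto its image. Next I would lift $T$: its arcs and boundary segments lift to a $\tau$-invariant ideal triangulation $\tilde T$ of $(\tilde S,\tilde M)$, and since $\tau$ acts by isometries it preserves lambda lengths. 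Under Penner's theorem (the orientable instance of the present statement) the lambda lengths of $\tilde T$ give a homeomorphism $\tilde{\mathcal{T}}(\tilde S,\tilde M) \xrightarrow{\sim} \mathbb{R}_{>0}^{\tilde N}$, and $\tau$ acts on the target simply by permuting coordinates according to its action on the arcs of $\tilde T$. The fixed locus therefore maps homeomorphically onto the corresponding coordinate subspace $\mathbb{R}_{>0}^{\#\{\tau\text{-orbits}\}}$.

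It then remains to match the bookkeeping: the $\tau$-orbits of arcs in $\tilde T$ should be in bijection with the quasi-arcs and boundary segments $\gamma_1,\dots,\gamma_{n+b}$ of $T$, so that the coordinate subspace is exactly $\mathbb{R}_{>0}^{n+c}$ and coincides with the image of $\Lambda_T$. Since lambda lengths downstairs equal the corresponding lambda lengths of lifts upstairs, $\Lambda_T$ is precisely the restriction of Penner's homeomorphism to the fixed loci, hence itself a homeomorphism.

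I expect the one-sided closed curves to be the main obstacle. Unlike an ordinary arc, a one-sided closed curve $\alpha \in T$ sits inside a M\"obius-band neighbourhood whose boundary is a single enclosing arc; upstairs $\alpha$ lifts to a two-sided closed curve double-covering it rather than to a pair of arcs of $\tilde T$. One must therefore verify that Dupont and Palesi's special convention for $\lambda(\alpha)$ is compatible with Penner's coordinates on $(\tilde S,\tilde M)$---that the enclosing arc and the closed curve together contribute the correct $\tau$-orbits and that the relevant Ptolemy-type relation on the double cover descends correctly. Carrying out this local analysis in the M\"obius band, and confirming that the invariant-structure correspondence is genuinely a homeomorphism rather than merely a bijection, is the technical heart; the remaining dimension count $\#\{\tau\text{-orbits}\} = n+c$ is then routine.
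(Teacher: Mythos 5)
This theorem is quoted in the paper from Dupont and Palesi (their Theorem 4.2) and no proof is given here; in the cited source the parametrisation is obtained in Penner's style, by directly reconstructing a decorated hyperbolic structure from an arbitrary assignment of positive coordinates, gluing decorated ideal triangles along the arcs of $T$ and treating the M\"obius-strip piece around each one-sided closed curve by an explicit hyperbolic computation. Your route --- identifying $\tilde{\mathcal{T}}(S,M)$ with the fixed locus of the deck involution $\tau$ on the decorated Teichm\"uller space of the orientation double cover and restricting Penner's homeomorphism --- is a genuinely different strategy, and it does work cleanly in the special case where $T$ contains no one-sided closed curves: then $T$ lifts to a $\tau$-invariant ideal triangulation, every element of $T$ gives a $\tau$-orbit of exactly two arcs (a fixed-point-free involution cannot preserve an arc), Penner's map is $\tau$-equivariant, and the fixed loci match up as you say. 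Even there, the identification of $\tilde{\mathcal{T}}(S,M)$ with the $\tau$-fixed locus needs the realization point that a class merely fixed by $\tau^{*}$ contains an honestly $\tau$-invariant structure; this you wave at with ``one checks,'' though it can in fact be extracted from the equivariance and injectivity of Penner's coordinates, so it is a fixable omission.

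The genuine gap is the case you yourself flag and then defer: quasi-triangulations containing one-sided closed curves, which is precisely the new content of the theorem beyond Penner. For such $T$ the lifted family $\tilde{T}$ is \emph{not} an ideal triangulation: a one-sided curve $\alpha$ lifts to a single $\tau$-invariant two-sided closed curve, and the M\"obius strip containing $\alpha$ lifts to an annulus (one marked point on each boundary circle) that still requires two diagonal arcs to be triangulated. Consequently your claimed bijection between $\tau$-orbits of arcs of $\tilde{T}$ and the elements $\gamma_1,\dots,\gamma_{n+b}$ of $T$ fails, and $\Lambda_T$ is \emph{not} the restriction of Penner's homeomorphism, because $\lambda(\alpha)$ is not one of Penner's coordinates upstairs. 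To close the argument one must complete $\tilde{T}$ by a $\tau$-compatible choice of diagonals in each such annulus and then prove that, with all other coordinates fixed, trading the diagonal's lambda length for Dupont--Palesi's $\lambda(\alpha)$ is a homeomorphism $\mathbb{R}_{>0}\to\mathbb{R}_{>0}$ --- an explicit hyperbolic-trigonometry lemma in the M\"obius strip, which is exactly the computation the cited proof carries out and which underlies the exchange relations (2) and (3) of Section 3. Since your proposal names this step ``the technical heart'' but supplies no argument for it, what you have is a correct reduction of the already-known orientable-style part to Penner, with the decisive non-orientable step missing.
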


As a consequence they show that the lambda lengths of quasi-arcs and boundary arcs in a triangulation can be viewed as algebraically independent variables and we have a canonical isomorphism $$\mathbb{Q}(\{ \lambda(\gamma) | \gamma \in T\cup B(S,M) \}) \cong \mathcal{F}.$$

They define a (quasi)-cluster structure by calculating how these lambda lengths are related under flips. We provide these precise relations below in Definition \ref{lambdarules}. Note that instead of working with lambda lengths we shall instead always consider their corresponding elements in $\mathcal{F}$.

\begin{defn}
\label{lambdarules}
Given $\gamma \in T$ we define \textit{\textbf{quasi-mutation}} of $(\mathbf{x},T)$ in \textit{\textbf{direction \boldmath$\gamma$}}  to be the pair $\mu_{\gamma}(\mathbf{x},T) := (\mathbf{x}',T')$ where $T' := \mu_{\gamma}(T)$ and $\mathbf{x}' := \mathbf{x} \setminus \{ x_{\gamma} \} \cup \{x_{\gamma'}\}$. The new variable $x_{\gamma'}$ depends on the combinatorial type of flip being performed. We list below the possible flips and their corresponding variable exchange relations, which were computed in \cite{dupont2015quasi}. \newline

\noindent (1). $\gamma$ is an arc separating two different triangles. 

\begin{figure}[H]
\begin{center}
\includegraphics[width=9.5cm]{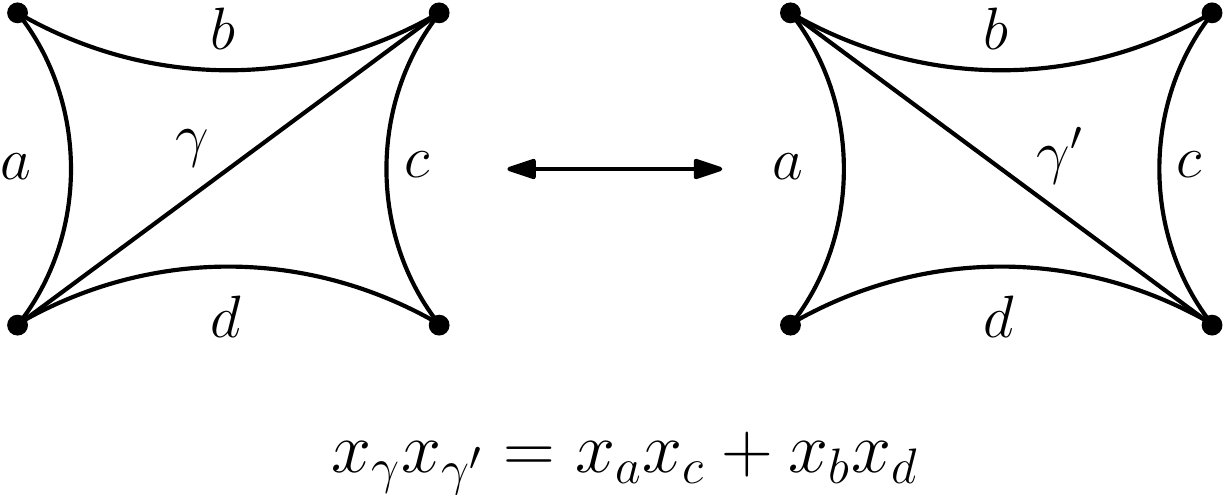}
\end{center}
\end{figure}

\noindent (2). $\gamma$ is enclosed by an arc $a$ bounding a M\"obius strip with one marked point.

\begin{figure}[H]
\begin{center}
\includegraphics[width=8cm]{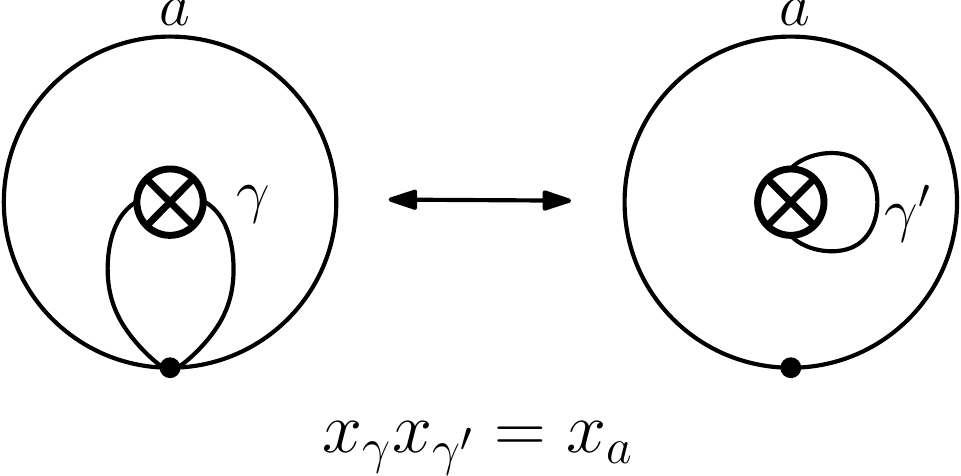}
\end{center}
\end{figure}

\noindent (3). $\gamma$ encloses a one-sided closed curve $c$.

\begin{figure}[H]
\begin{center}
\includegraphics[width=8cm]{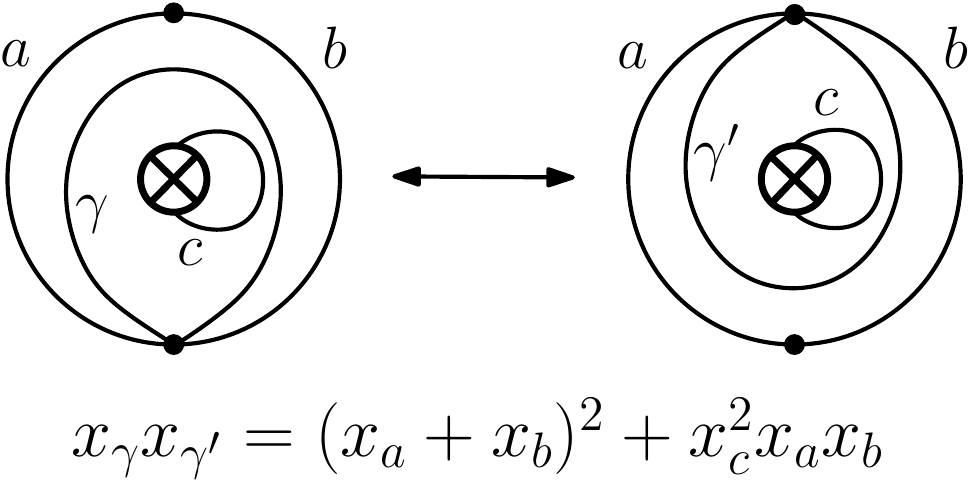}
\end{center}
\end{figure}

\end{defn}

Let $(\mathbf{x},T)$ be a seed of $(S,M)$. If we label the cluster variables of $\mathbf{x}$ $1,\ldots, n$ then we can consider the labelled n-regular tree $\mathbb{T}_n$ generated by this seed through mutations. Each vertex in $\mathbb{T}_n$ has $n$ incident vertices labelled $1,\ldots,n$. Vertices represent seeds and the edges correspond to mutation. In particular, the label of the edge indicates which direction the seed is being mutated in. \newline

Let $\mathcal{X}$ be the set of all cluster variables appearing in the seeds of $\mathbb{T}_n$. $\mathcal{A}_{(\mathbf{x},T)}(S,M) := \mathbb{ZP}[\mathcal{X}]$ is the \textit{\textbf{quasi-cluster algebra}} of the seed $(\mathbf{x},T)$.

The definition of a quasi-cluster algebra depends on the choice of the initial seed. However, if we choose a different initial seed the resulting quasi-cluster algebra will be isomorphic to $\mathcal{A}_{(\mathbf{x},T)}(S,M)$. As such, it makes sense to talk about the quasi-cluster algebra of $(S,M)$.

\section{Connecting LP algebras and quasi-cluster algebras}

\subsection{Adjusting the definition of quasi-cluster algebras.}

Recall that an LP seed must consist of irreducible polynomials. As a consequence it can be seen that, in their current form, quasi-cluster algebras can not be realised as LP algebras. (See figure below).

\begin{figure}[H]
\begin{center}
\includegraphics[width=8cm]{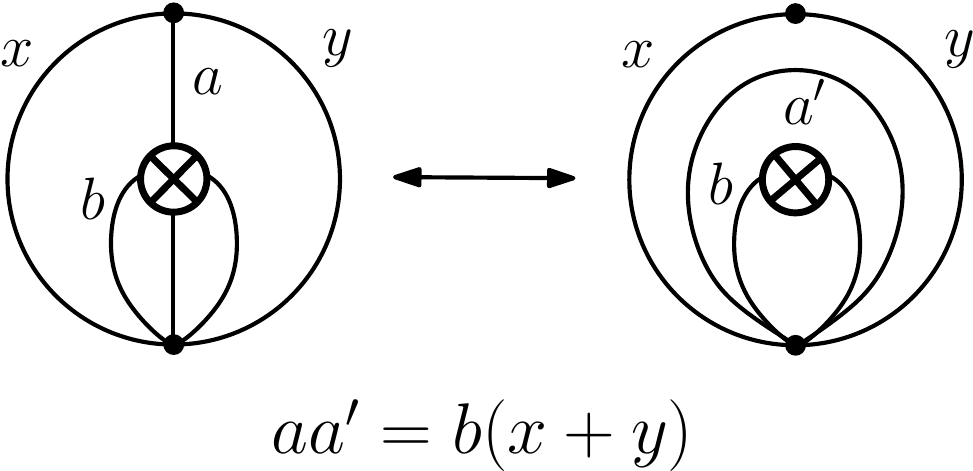}
\caption{An example of a quasi-triangulation with a reducible exchange polynomial.}
\label{badflip}
\end{center}
\end{figure}

Figure \ref{badflip} shows that in the triangulation on the left the exchange polynomial associated to the arc $a$ is $F_a = b(x+y)$, which is not irreducible in $\mathbb{Z}[x,y,a,b]$. To establish a connection between quasi-cluster algebras and LP algebras we therefore propose a small change to the quasi-arcs considered and to their compatibility relations. This alteration was suggested by Pylyavskyy in private communication \cite{pylyavskyy2016}. We shall see that the new definition is very natural - it mimics how the problem of punctured surfaces was resolved in \cite{fomin2008cluster} via tagged triangulations. \newline \indent Note that in Figure \ref{badflip} we are abusing notation by denoting the variable corresponding to an arc, by the arc itself. We shall adopt this practice from here onwards.

\begin{defn} \label{newarcs} [New definition of quasi-arcs]. A \textit{\textbf{quasi-arc}} is a one-sided closed curve or an arc that does not bound a M\"obius strip, $M_1$, with one marked point on the boundary.

\end{defn}

To each arc $\gamma$ bounding a M\"obius strip with one marked point, $M_1^{\gamma}$, we associate the two quasi-arcs of $M_1^{\gamma}$. Namely, we associate the arc $\alpha_{\gamma}$ and the one-sided curve $\beta_{\gamma}$ compatible with the $M_1^{\gamma}$, see figure below.

\begin{center}

\begin{figure}[H]
\begin{center}
\includegraphics[width=3cm]{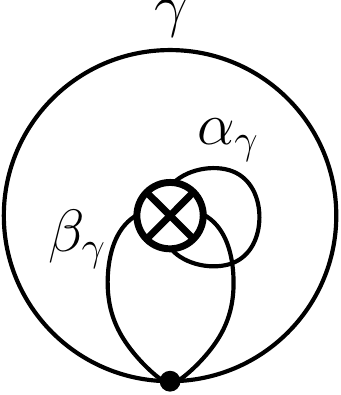}
\caption{The unique quasi-arcs $\alpha_{\gamma}$ and $\beta_{\gamma}$ compatible with the M\"obius strip, $M_1^{\gamma}$, cut out by an arc $\gamma$.}
\label{newcompatibility}
\end{center}
\end{figure}

\end{center}

\begin{defn} \label{newcompatibilitydef} [New definition of compatibility]. We say that two quasi arcs $\alpha, \beta$ are \textit{\textbf{compatible}} if they don't intersect or if $\alpha$ and $\beta$ are the two quasi-arcs of $M_1^{\gamma}$ for some arc $\gamma$. I.e, $\{\alpha,\beta\} = \{\alpha_{\gamma},\beta_{\gamma}\}$ for some arc $\gamma$ bounding a M\"obius strip $M_1^{\gamma}$ as in Figure \ref{newcompatibility}.

\end{defn}

As is usual, a \textit{\textbf{quasi-triangulation}} is a maximal collection of pairwise compatible quasi-arcs. It is easily seen that under these new definitions Proposition \ref{flip} remains true. Namely, every quasi-arc in a quasi-triangulation can be uniquely flipped.

To get a cluster structure on this new definition we imitate precisely what is done in Section 3 by describing how the lengths of quasi-arcs are related. We list below the possible types of flips and their corresponding exchange relations. Note that these relations can be directly obtained from those given in Section \ref{quasi}.  \newline

\noindent (1). $\gamma$ is an arc separating two different triangles which doesn't flip to a one-sided closed curve.

\begin{figure}[H]
\begin{center}
\includegraphics[width=9cm]{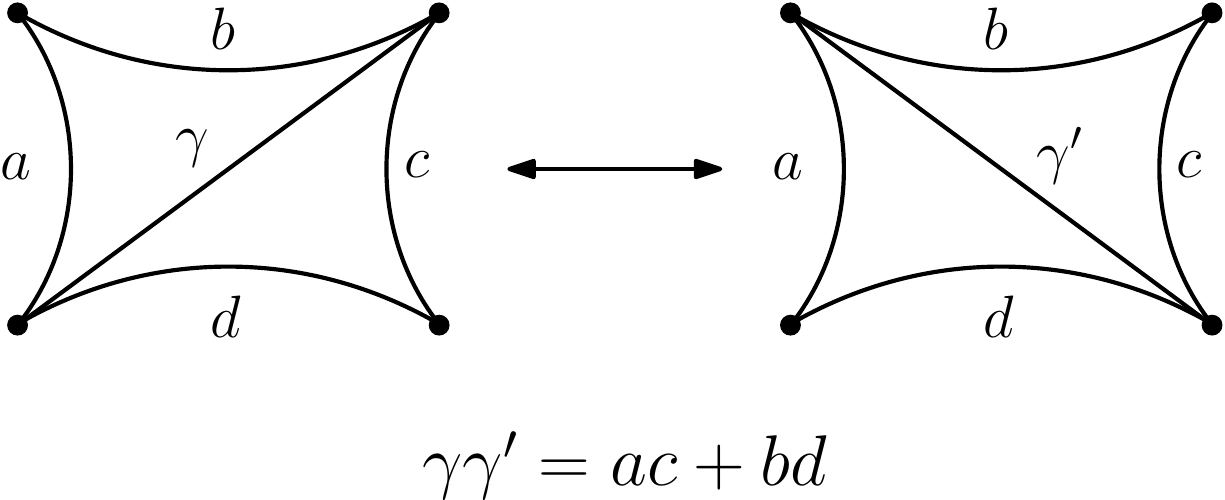}
\end{center}
\end{figure}

\noindent (2). $\gamma$ is an arc that flips to a one-sided closed curve, or vice verca.

\begin{figure}[H]
\begin{center}
\includegraphics[width=7.5cm]{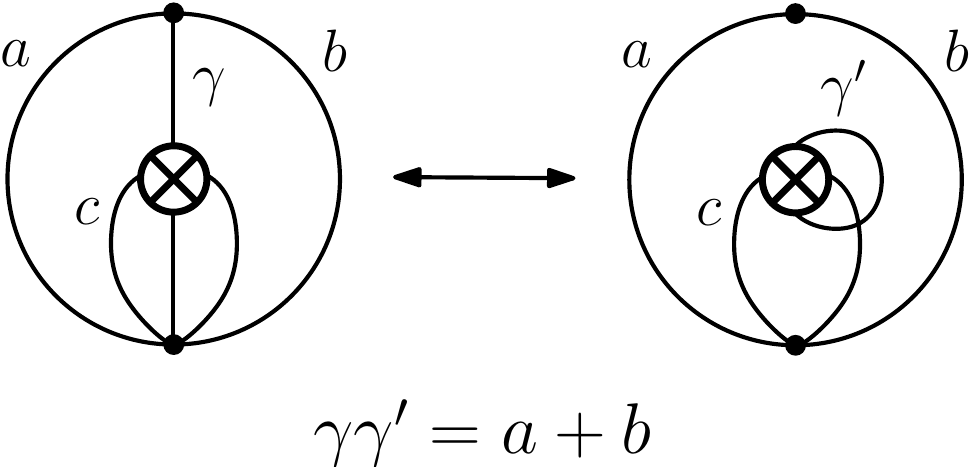}
\end{center}
\end{figure}

\noindent (3). $\gamma$ is an arc intersecting a one-sided close curve $c$.

\begin{figure}[H]
\begin{center}
\includegraphics[width=7.5cm]{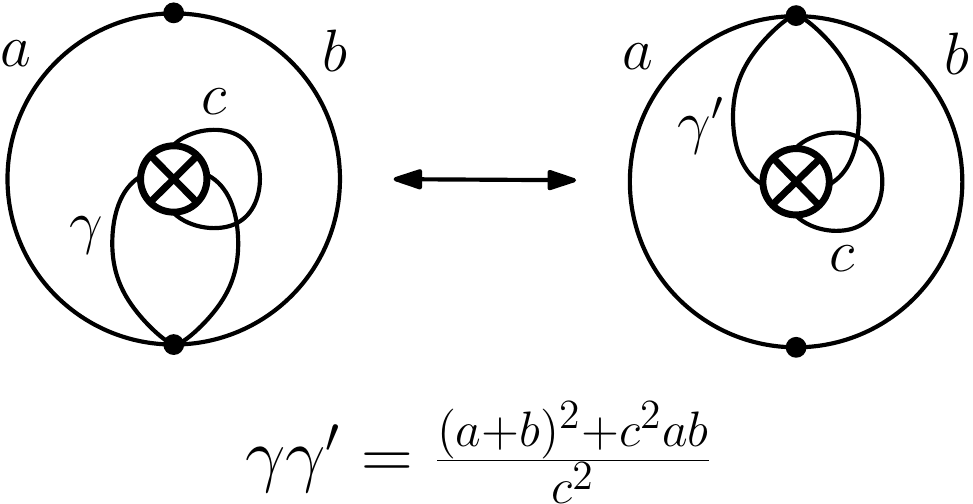}
\end{center}
\end{figure}

Recall that for our old version of quasi-cluster algebras Figure \ref{badflip} showed that for any bordered non-orientable surface $(S,M)$ (of rank greater than $1$) there exists quasi-triangulations containing reducible exchange polynomials. However, now, instead of $\gamma$ flipping to an arc bounding $M_1$, it flips to a one-side closed curve. As such, the old exchange polynomial $F_{\gamma} = b(x+y)$ has changed to the irreducible polynomial $F_{\gamma} = x+y$. \newline

\begin{defn}

The \textit{\textbf{quasi-arc complex}} $\Delta^{\otimes}(S,M)$ of a bordered surface $(S,M)$ is the simplicial complex with the ground set being the quasi-arcs of $(S,M)$, and the maximal simplices being the quasi-triangulations.

\end{defn}

\begin{defn}

The \textit{\textbf{exchange graph}} of a bordered surface $(S,M)$ is the graph whose vertices correspond to the quasi-triangulations of $(S,M)$. Two vertices are connected by an edge if their corresponding quasi-triangulations differ by a single flip.

\end{defn}

We shall now restrict our attention to quasi-triangulations not containing any one-sided closed curves. Such a quasi-triangulation will be referred to as a \textit{\textbf{triangulation}}. Furthermore, if $\gamma$ is an arc in a triangulation $T$ and $\mu_{\gamma}(T)$ is also a triangulation then we call $\gamma$ triangulation-mutable, or \textit{\textbf{t-mutable}} for short.

\subsection{The double cover and anti-symmetric quivers.}

Let $(S,M)$ be a bordered surface. We construct an orientable double cover of $(S,M)$ as follows. First consider the orientable surface $\tilde{S}$ obtained by replacing each cross-cap with a cylinder, see Figure \ref{surfaceandcylinder}.

\begin{figure}[H]
\begin{center}
\includegraphics[width=8cm]{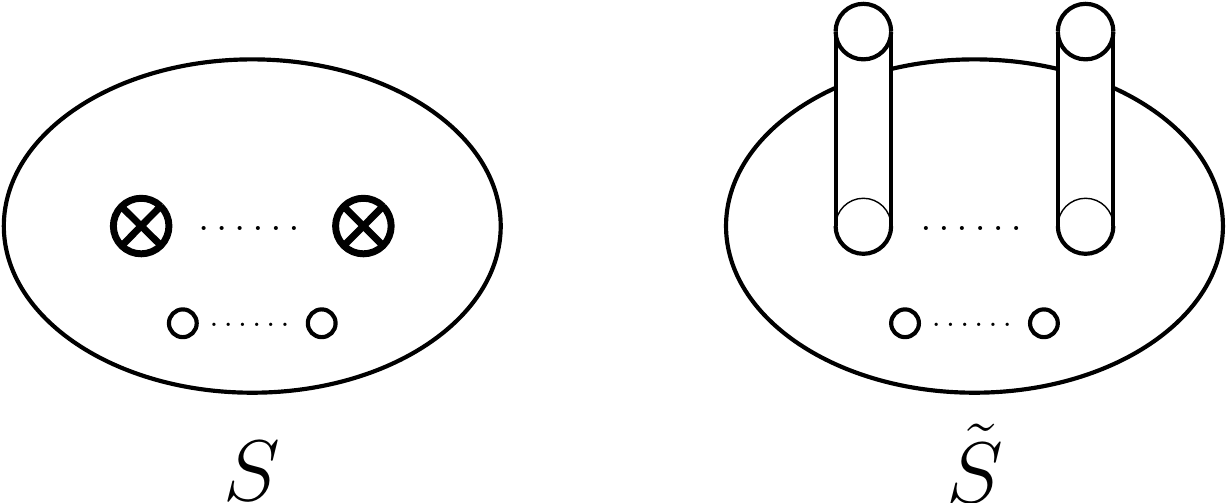}
\caption{Here we draw the non-orientable surface $S$ and the surface $\tilde{S}$ obtained by replacing each cross-cap with a cylinder. Note that the small circles represent boundary components.}
\label{surfaceandcylinder}
\end{center}
\end{figure}

We obtain the orientable double cover $\overline{(S,M)}$ of $(S,M)$ by taking two copies of $\tilde{S}$ and glueing each newly joined cylinder in the first copy, with a half twist, to the corresponding cylinder in the second copy. I.e, we are glueing each cylinder in the first copy along their antipodal points in the second copy, see Figure \ref{surfaceglueing}. If $S$ is orientable then the double cover is two disjoint copies of $(S,M)$. In this case we endow the two disjoint copies with alternate orientations - this is to ensure its adjacency quiver is anti-symmetric, see Definition \ref{antisymmetric}.

\begin{center}

\begin{figure}[H]
\begin{center}
\includegraphics[width=10cm]{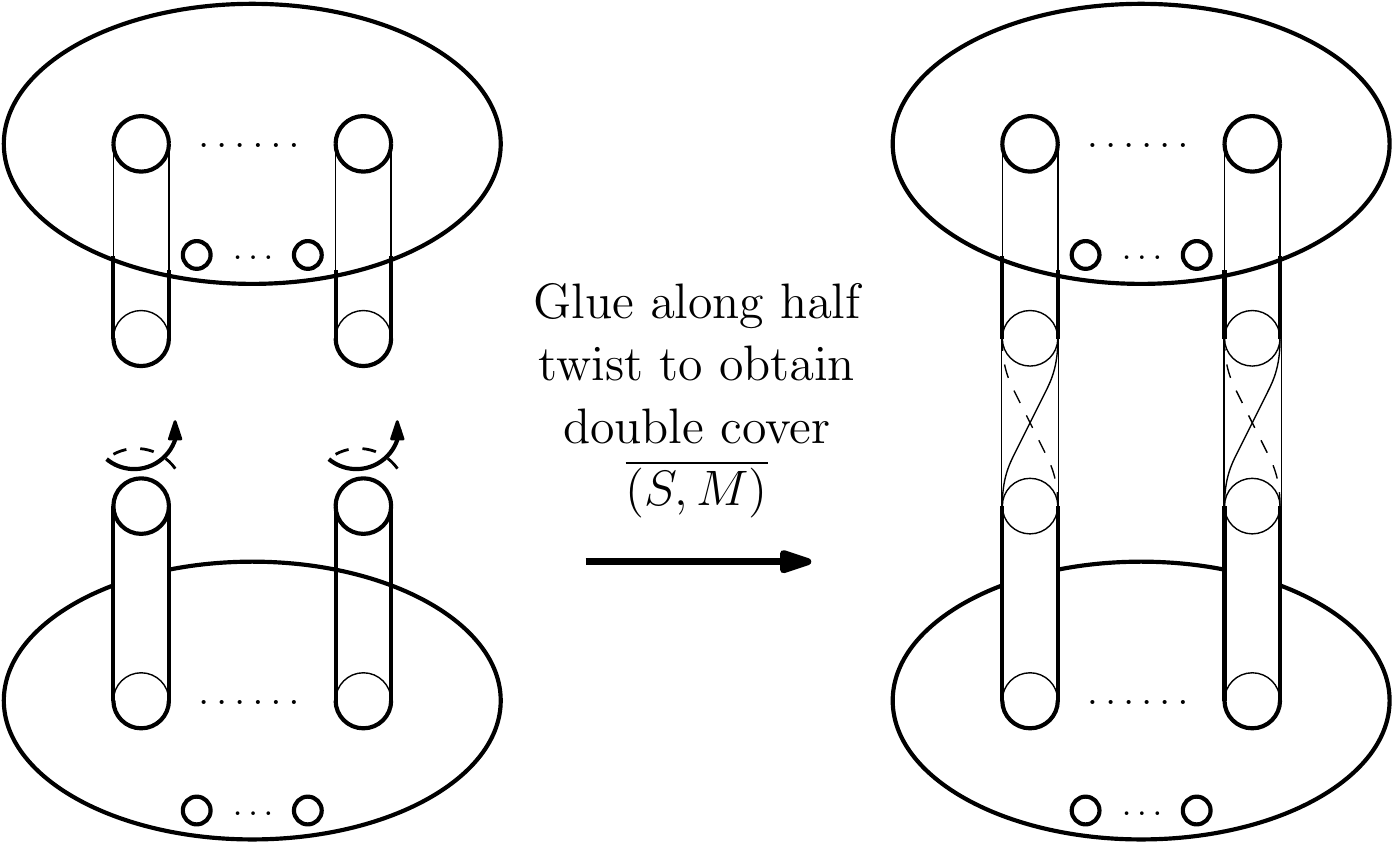}
\caption{We glue two copies of $\tilde{S}$ along the boundaries of the newly adjoined cylinders.}
\label{surfaceglueing}
\end{center}
\end{figure}

\end{center}

Due to Dupont and Palesi we have the following proposition.

\begin{prop}[\cite{dupont2015quasi}]
\label{quiverandflip}
Let $T$ be a triangulation of $(S,M)$. Then $T$ lifts to a triangulation $\overline{T}$ of the orientable double cover $\overline{(S,M)}$. Moreover, let $i$ be a t-mutable arc in $T$ and, by abuse of notation, denote by $i$ and $\tilde{i}$ the two arcs $i$ lifts to in $\overline{T}$. Then $\mu_i\circ\mu_{\tilde{i}}(\overline{T}) = \mu_{\tilde{i}}\circ\mu_i(\overline{T}) = \overline{\mu_i(T)}$.

\end{prop}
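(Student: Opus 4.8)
The plan is to work throughout with the covering map $\pi\colon \overline{(S,M)}\to (S,M)$ produced by the construction (each cross-cap, i.e.\ Möbius band, is replaced by its orientation double cover, a cylinder), together with its free $\mathbb{Z}/2$ deck involution $\tau$, characterised by $\pi\circ\tau=\pi$. First I would record the two structural facts I need: (i) over any simply connected $U\subseteq S$ the cover is trivial, so $\pi^{-1}(U)=U_1\sqcup U_2$ with $\tau$ swapping the two sheets and $\pi$ restricting to a homeomorphism on each; and (ii) $\tau$ is fixed-point free. I would then set $\overline T:=\pi^{-1}(T)$ and aim to prove both that $\overline T$ is a triangulation and that the double flip produces $\pi^{-1}(\mu_i(T))$.

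For the first assertion, each arc $\gamma\in T$ is homeomorphic to an interval and hence simply connected, so fact (i) gives $\pi^{-1}(\gamma)=\gamma_1\sqcup\gamma_2$ with $\tau(\gamma_1)=\gamma_2$; in particular the t-mutable arc $i$ lifts to exactly two arcs, which I label $i$ and $\tilde i=\tau(i)$. Disjointness of the arcs of $T$ is preserved under $\pi^{-1}$ (preimages of disjoint sets are disjoint), so the lifted arcs are pairwise compatible. To obtain maximality I would lift the triangular faces: each face $\Delta$ of $T$ is an embedded disk, so by (i) $\pi^{-1}(\Delta)$ consists of two disjoint disks each mapped homeomorphically onto $\Delta$; these lifted faces glue along the lifted arcs exactly as their images do and together tile $\overline{(S,M)}$. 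Hence $\overline T=\pi^{-1}(T)$ is a $\tau$-invariant triangulation.

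For the flip statement I would exploit t-mutability as follows. Since $i$ is t-mutable it is a type-(1) arc, i.e.\ the diagonal of a quadrilateral $Q=\Delta\cup\Delta'$ built from two \emph{distinct} faces glued along $i$; crucially $Q$ is an embedded disk. By fact (i), $\pi^{-1}(Q)=Q_i\sqcup Q_{\tilde i}$ splits into two disjoint disks with $Q_{\tilde i}=\tau(Q_i)$, and $i$, $\tilde i$ are their respective diagonals. Because these two disks are disjoint, $i$ and $\tilde i$ bound no common face of $\overline T$, so the flip at $i$ alters only $Q_i$ while the flip at $\tilde i$ alters only the disjoint disk $Q_{\tilde i}$. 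Mutations supported on disjoint regions commute, which gives $\mu_i\circ\mu_{\tilde i}(\overline T)=\mu_{\tilde i}\circ\mu_i(\overline T)$.

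Finally I would identify this common value with $\overline{\mu_i(T)}$. The flip replaces $i$ by the other diagonal $i'$ of $Q$; since $\pi$ restricts to a homeomorphism $Q_i\to Q$ and $Q_{\tilde i}\to Q$, the flips of $i$ and $\tilde i$ are precisely the two lifts of $i'$, while every other arc of $\overline T$ is unchanged. Thus $\mu_{\tilde i}\circ\mu_i(\overline T)=\big(\overline T\setminus\{i,\tilde i\}\big)\cup\pi^{-1}(i')=\pi^{-1}(\mu_i(T))=\overline{\mu_i(T)}$, the last equality being Part 1 applied to the triangulation $\mu_i(T)$. I expect the main obstacle to be verifying that t-mutability really forces $Q$ to be an embedded disk over which the cover is trivial, that is, ruling out the possibility that the star of $i$ wraps through a cross-cap to form a Möbius neighbourhood, which is exactly the configuration in which $i$ would flip to a one-sided closed curve. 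Once this local orientability is established, everything else reduces to the standard triviality of a cover over simply connected pieces.
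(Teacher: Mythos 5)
The paper never proves this proposition --- it is imported wholesale from Dupont and Palesi \cite{dupont2015quasi} --- so your argument has to be judged on its own terms. Its overall shape (deck involution $\tau$, local triviality of the cover, disjoint supports, commutation, then projection) is the right one, and the lifting of $T$ and the final identification with $\overline{\mu_i(T)}$ are fine, modulo the standard caveat that closed faces need not be embedded, so one should lift characteristic maps of model triangles rather than ``embedded disks.'' The genuine gap is in the central step: you claim that t-mutability forces the closed quadrilateral $Q=\Delta\cup\Delta'$ to be an embedded disk, and you diagnose the only possible failure as the star of $i$ wrapping through a cross-cap into a M\"obius band, ``which is exactly the configuration in which $i$ would flip to a one-sided closed curve.'' Both halves of this dichotomy are false. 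Sides and vertices of $Q$ can be identified in two-sided ways for perfectly t-mutable arcs (annular configurations; or the torus with one boundary component, where all four vertices of $Q$ coincide), so $Q$ need not be embedded. Worse, the paper's own Figure \ref{badvertex} refutes the claimed equivalence with t-mutability: in the triangulation $\{i,k\}$ of the M\"obius strip $M_2$, the arc $k$ \emph{is} t-mutable (it flips to an arc), yet both triangles of the triangulation contain both $i$ and $k$, the closed flip region of $k$ is the entire M\"obius strip, and $\pi^{-1}(Q)$ is the whole lifted cylinder --- connected, not two disjoint disks. So the step ``$\pi^{-1}(Q)=Q_i\sqcup Q_{\tilde i}$, hence the two flips have disjoint supports'' fails exactly as written, and the ``local orientability'' you hoped to establish is simply not available.

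The repair is close at hand. Replace the closed quadrilateral by its interior: since the two faces adjacent to $i$ are distinct (a face containing $i$ twice would be a self-folded or anti-self-folded triangle, excluded for unpunctured surfaces under Definition \ref{newarcs}; see Figure \ref{antiself}), the union of the two open faces with the open edge $i$ is \emph{always} an embedded open disk, over which the cover is trivial; its preimage is two disjoint open disks swapped by $\tau$, containing $i$ and $\tilde i$ respectively. Equivalently, argue at the level of faces: no triangle of $\overline T$ has both $i$ and $\tilde i$ as sides --- this is precisely the paper's observation that an arrow $i\rightarrow\tilde i$ in $Q_{\overline T}$ would force an anti-self-folded triangle in $T$ --- so the four faces involved in the two flips are pairwise distinct, and flips modifying disjoint sets of faces commute even when their closed regions share boundary arcs, as the regions of $k$ and $\tilde k$ in Figure \ref{badvertex} do. With this substitute your final paragraph goes through essentially verbatim, and t-mutability is invoked only where it is genuinely needed: to guarantee that the downstairs flip $i'$ is an arc rather than a one-sided closed curve, so that $\pi^{-1}(i')$ consists of two arcs and $\overline{\mu_i(T)}$ is defined at all.
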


Furthermore, note that if $i$ and $j$ are arcs of a triangle $\Delta$ in $\overline{T}$, and $j$ follows $i$ in $\Delta$ under the agreed orientation of $\overline{(S,M)}$, then $\tilde{i}$ follows $\tilde{j}$ in the twin triangle $\tilde{\Delta}$. Hence in the quiver $Q_{\overline{T}}$ associated to $\overline{T}$ we have that $i \rightarrow j \iff \tilde{j} \rightarrow \tilde{i}$. Here we adopt the notation that $\tilde{\tilde{i}} = i$ for any $i \in \{1,\ldots, n\}$, and we shall use it throughout this paper.

Finally, note that there is no arrow $i \rightarrow \tilde{i}$ in $Q_{\overline{T}}$ as this would imply the existence of an anti-self folded triangle in $T$, which is forbidden under our new definition, see Figure \ref{antiself}.

\begin{figure}[H]
\begin{center}
\includegraphics[width=13.5cm]{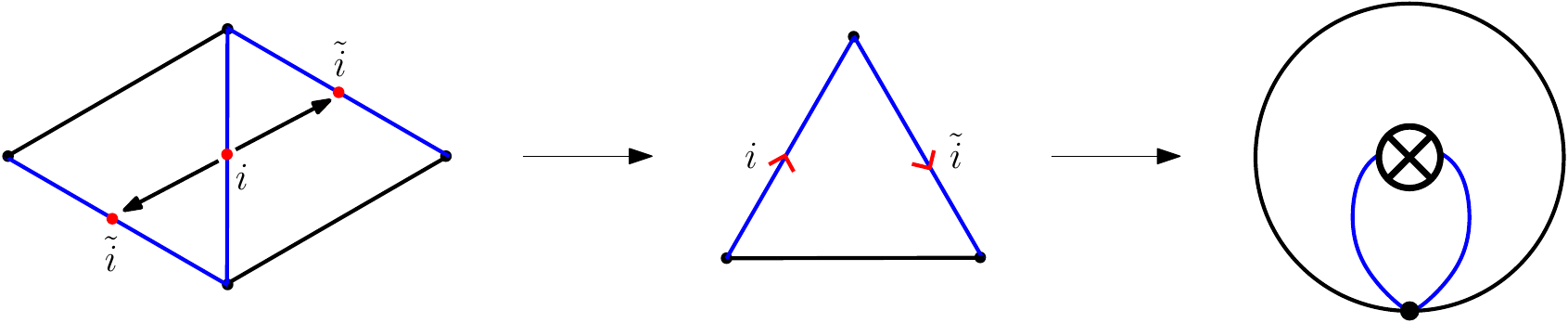}
\caption{An anti-self folded triangle; which is forbidden by the new definition.}
\label{antiself}
\end{center}
\end{figure}

These two observations motivate the following definition.

\begin{defn}
\label{antisymmetric}

A quiver $Q$ on vertices $1,\ldots, n, \tilde{1},\ldots, \tilde{n}$ is called \textit{\textbf{anti-symmetric}} if:

\begin{itemize}

\item For any $i, j \in \{1,\ldots, n, \tilde{1},\ldots, \tilde{n}\}$ we have $i \rightarrow j \iff \tilde{j} \rightarrow \tilde{i}$. 

\item  For any $i \in \{1,\ldots, n, \tilde{1},\ldots, \tilde{n}\}$ there are no arrows $i \rightarrow \tilde{i}$.

\end{itemize}

\end{defn}

\subsection{Mutation of anti-symmetric quivers as LP mutation.}

We shall now briefly leave the environment of triangulations and move to the more general setting of anti-symmetric quivers. In particular, we shall establish a connection between mutation of these quivers and LP-mutation. Recall that a quiver $Q$ can be equivalently encoded as a skew-symmetric matrix $B = (b_{ij})$. In what follows we shall interchange between the two viewpoints. 

Given an anti-symmetric quiver $Q = (b_{ij})$ we may assign an exchange polynomial to each pair of vertices $(j, \tilde{j})$ of $Q$. 

\begin{center}

 $F_j^Q := \displaystyle \prod_{b_{ij}+b_{\tilde{i}j} >0} x_i^{b_{ij}+b_{\tilde{i}j}} + \prod_{b_{ij}+b_{\tilde{i}j} <0} x_i^{-(b_{ij}+b_{\tilde{i}j})}$

\end{center}

As a result we arrive at the seed $\Sigma_Q := (\{x_1,\ldots, x_n\},\{F_1^Q,\ldots, F_n^Q\})$ associated to $Q$. Of course, this may not be a valid LP seed due to the requirement of irreducibility. We won't always get irreducibility, but, as the proposition below demonstrates, there are plenty of cases where $Q$ does provide a valid LP seed.

\begin{prop}
\label{irreducible}
If $gcd(b_{1j}+b_{\tilde{1}j},\ldots, b_{nj}+b_{\tilde{n}j}) = 1$ then $F_j$ is irreducible in $\mathbb{Z}[x_1,\ldots,x_n]$.

\end{prop}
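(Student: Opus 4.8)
The plan is to reduce the statement to a clean fact about binomials. Writing $c_i := b_{ij} + b_{\tilde i j}$, the polynomial in question is
$$F_j = u + v, \qquad u := \prod_{c_i > 0} x_i^{c_i}, \quad v := \prod_{c_i < 0} x_i^{-c_i},$$
a sum of two monomials each with coefficient $1$. These two monomials have disjoint supports (each $x_i$ occurs in at most one of $u,v$), so $\gcd(u,v) = 1$; in particular $F_j$ is divisible by no variable and has content $1$. The hypothesis $\gcd(c_1,\ldots,c_n) = 1$ says exactly that the vector $\mathbf d = (c_1,\ldots,c_n)$ is primitive, and I want to deduce that $u + v$ is irreducible in $\mathbb{Z}[x_1,\ldots,x_n]$.

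First I would set up the Newton polytope of $F_j$. Its two exponent vectors are $\alpha$ (supported on $\{c_i>0\}$) and $\beta$ (supported on $\{c_i<0\}$), and $\alpha - \beta = \mathbf d$. Thus $\mathrm{Newt}(F_j)$ is the segment $[\beta,\alpha]$, whose direction is the primitive vector $\mathbf d$; equivalently its lattice length, the $\gcd$ of the components of $\alpha - \beta$, equals $1$, so the segment contains no lattice point besides its endpoints. Suppose $F_j = gh$ in $\mathbb{Z}[x_1,\ldots,x_n]$. Since $\mathbb{Z}$ is a domain, the Newton polytope is multiplicative, $\mathrm{Newt}(g) + \mathrm{Newt}(h) = \mathrm{Newt}(F_j)$ as a Minkowski sum. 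Because the sum is one-dimensional, each of $\mathrm{Newt}(g),\mathrm{Newt}(h)$ is a (possibly degenerate) segment parallel to $\mathbf d$, and lattice length is additive along a common direction; as the total lattice length is $1$, one factor, say $g$, must have a single-point Newton polytope, i.e. $g$ is a monomial.

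It then remains to rule out a nontrivial monomial factor. Writing $g = c\,x^{\gamma}$ with $c \in \mathbb{Z}\setminus\{0\}$, the divisibility $x^{\gamma}\mid F_j$ forces $x^{\gamma}$ to divide both $u$ and $v$; since $\gcd(u,v)=1$ this gives $\gamma = 0$, so $g = c$ is constant. Finally $F_j = c\,h$ shows that $c$ divides the content $1$ of $F_j$, whence $c = \pm 1$ is a unit. Therefore every factorization of $F_j$ is trivial, and $F_j$ is irreducible.

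The main obstacle is the geometric bookkeeping of the middle step: justifying that the factors' Newton polytopes are collinear segments whose lattice lengths add, and translating ``primitive direction'' into ``no intermediate lattice point.'' If one prefers to avoid Newton-polytope machinery, the same conclusion is reachable elementarily: by B\'ezout choose $\mathbf w \in \mathbb{Z}^n$ with $\langle \mathbf d, \mathbf w\rangle = 1$ and grade the variables by $\deg x_i = w_i$; then $u$ and $v$ lie in consecutive weighted degrees, decomposing each factor into weighted-homogeneous parts forces one factor to be weighted-homogeneous, and since the weight hyperplane meets the line through $\mathbf d$ in a single point that factor is again a monomial. Either route lands at the same place, and the content argument of the last paragraph then finishes the proof.
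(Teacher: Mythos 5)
Your argument is correct, and it is genuinely different from what the paper does: the paper offers no inline argument at all, disposing of Proposition \ref{irreducible} in one line by declaring the proof identical to that of Lemma 4.1 of \cite{lam2012laurent}. Your Newton-polytope route is self-contained and clean: multiplicativity of Newton polytopes over a domain, the fact that Minkowski summands of a segment are parallel segments whose lattice lengths add, and the observation that the hypothesis $\gcd(b_{1j}+b_{\tilde 1 j},\ldots,b_{nj}+b_{\tilde n j})=1$ is precisely the statement that the segment $[\beta,\alpha]$ has lattice length $1$, together force one factor to be a monomial; your content argument and the disjointness of the supports of $u$ and $v$ then kill the monomial factor. (Two trivial points of hygiene you leave implicit: the hypothesis forces some $c_i\neq 0$, so $u\neq v$ and $F_j$ is not a unit.) What your route buys over the citation is transparency about where the hypothesis enters and why it is only sufficient: $x^2+y^2$ has $\gcd = 2$ yet is irreducible over $\mathbb{Z}$, which is exactly the phenomenon Lemma \ref{validseed} has to handle separately in its $\gcd=2$ cases, and your lattice-length picture makes clear that primitivity of $\alpha-\beta$ is what rules out \emph{all} nontrivial factorizations rather than merely the cyclotomic-type ones. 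One caveat: your closing ``elementary'' variant is not quite complete as stated --- weighted-homogeneity of a factor does not by itself make it a monomial, and the justification you give (``the weight hyperplane meets the line through $\mathbf d$ in a single point'') silently reuses the collinearity of the factor's Newton polytope, i.e.\ the machinery you were trying to avoid. The repair is easy and worth recording: if $g$ is weighted-homogeneous and $F_j=gh$, then $g$ times the lowest weighted-homogeneous component of $h$ equals the lowest component of $F_j$, which is the single monomial $v$ (or $u$); a divisor of a monomial in $\mathbb{Z}[x_1,\ldots,x_n]$ is $\pm$ a monomial, and your final paragraph then applies verbatim. With that patch either route is complete; the main Newton-polytope argument needs no change.
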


\begin{proof}

The proof is identical to that of Lemma 4.1 in \cite{lam2012laurent}.

\end{proof}

Note that if we want double mutation of our quiver to correspond to LP mutation then it is necessary for us to have $\hat{F}_i = F_i$ $ \forall i \in \{1,\ldots, n\}$. This is because the exchange polynomials of the arcs in the triangulations are polynomials (not strictly Laurent polynomials), so the normalisation process needs to be vacuous.

\begin{prop}
\label{quivermut}
Suppose $\Sigma_Q$ is a valid LP seed and $\hat{F}_i = F_i $ $\forall i \in \{1,\ldots, n\}$. Let $i$ be a vertex in $Q$ such that there is no path $a \rightarrow i \rightarrow \tilde{a}$ for any vertex $a \in \{1,\ldots, n, \tilde{1},\ldots, \tilde{n}\}$. Then mutation at $i$ and $\tilde{i}$ in $Q$ corresponds to LP mutation of $\Sigma_Q$ at $i$. I.e, $ (\{x_1,\ldots,\frac{F_i}{x_i} ,\ldots, x_n\},\{F_1^{\mu_i\circ\mu_{\tilde{i}}(Q)},\ldots, F_n^{\mu_i\circ\mu_{\tilde{i}}(Q)}\}) = \mu_i(\{x_1,\ldots, x_n\},\{F_1^Q,\ldots, F_n^Q\})$.

\end{prop}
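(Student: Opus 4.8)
The plan is to verify the claimed seed equality coordinate-by-coordinate, comparing the right-hand side obtained via a single LP mutation $\mu_i$ with the left-hand side obtained by the double quiver mutation $\mu_i \circ \mu_{\tilde{i}}$ and forming the associated exchange polynomials $F_j^{\mu_i\circ\mu_{\tilde{i}}(Q)}$. The cluster variables match trivially: since $\hat{F}_i = F_i$, LP mutation sends $x_i \mapsto \hat{F}_i/x_i = F_i/x_i$ and fixes all other $x_j$, while quiver mutation does not touch cluster variables at all, so the first coordinates agree by hypothesis. The substance is therefore entirely in showing that for each $j$ the LP-mutated exchange polynomial $F_j'$ (produced by the three-step process) equals the combinatorially-mutated polynomial $F_j^{\mu_i\circ\mu_{\tilde{i}}(Q)}$ up to the unit ambiguity inherent in Step 3.

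First I would dispose of the easy case $x_i \notin F_j$: by the LP rule $F_j' = F_j$, and on the quiver side I must check that the combined exponent $b_{kj} + b_{\tilde{k}j}$ is unchanged for every $k$ after mutating at both $i$ and $\tilde{i}$. The hypothesis "no path $a \to i \to \tilde a$" together with anti-symmetry ($i\to j \iff \tilde j \to \tilde i$) is exactly what prevents the two mutations from creating new arrows among the vertices feeding $j$, so the sum is preserved; I would make this precise using the matrix mutation formula $b_{k\ell}' = b_{k\ell} + \tfrac{1}{2}(|b_{ki}|b_{i\ell} + b_{ki}|b_{i\ell}|)$ applied twice. Next, for the main case $x_i \in F_j$, I would compute the Fomin--Zelevinsky double mutation of the column sums $(b_{kj}+b_{\tilde k j})_k$ directly and show the result is precisely the exponent vector of the polynomial produced by the three-step LP process. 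Here Step 1 substitutes $x_i \leftarrow (\hat{F}_i|_{x_j\leftarrow 0})/x_i'$, Step 2 cancels the common factor with $\hat{F}_i|_{x_j\leftarrow 0}$, and Step 3 clears denominators; I would track each operation and match it against one application of matrix mutation, the cancellation in Step 2 corresponding to the sign-dependent "$+$" correction term in the mutation formula.

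The anti-symmetry is what makes the two single mutations at $i$ and $\tilde i$ package together into one LP mutation: the contribution of arrows through $i$ and the contribution through $\tilde i$ combine into the single exponent $b_{ij}+b_{\tilde i j}$ appearing in the definition of $F_j^Q$, and the absence of any arrow $i\to\tilde i$ (the anti-self-folded condition) guarantees that $\mu_i$ and $\mu_{\tilde i}$ commute and do not interfere. The condition $\hat F_i = F_i$ ensures the normalisation in Step 1 reads off $\hat F_i|_{x_j\leftarrow 0} = F_i|_{x_j\leftarrow 0}$, whose monomials are governed directly by the quiver exponents.

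I expect the main obstacle to be the bookkeeping in the main case: reconciling the sign-sensitive matrix mutation formula, applied twice, with the multiplicative cancellation of Step 2, since the places where the LP process divides out a common factor correspond exactly to the vertices $k$ where the arrows through $i$ reverse direction under mutation, and it is precisely the path-avoidance hypothesis "no $a\to i\to\tilde a$" that keeps these cancellations from spilling into the $\tilde k$ exponents and destroying the neat identification $F_j' = F_j^{\mu_i\circ\mu_{\tilde i}(Q)}$. I would organise this by splitting the index set of each $F_j$ according to the sign of $b_{kj}+b_{\tilde k j}$ and verifying the exponent identity sign-class by sign-class, finally absorbing any leftover monomial discrepancy into the Step 3 unit, which is all that the stated equality (\emph{up to a unit}, inherited from the definition of LP mutation) requires.
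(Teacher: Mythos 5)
Your proposal follows essentially the same route as the paper's proof: verify the seed equality coordinate-wise, split on whether $x_i \in F_j$, apply the matrix mutation formula at $i$ and $\tilde{i}$ (using that the absence of arrows between $i$ and $\tilde{i}$ makes the two mutations commute, so their contributions add), and match the exponent sums $b_{kj}+b_{\tilde{k}j}$ against the three-step LP process sign-class by sign-class, absorbing leftover monomials into Step 3. One small inaccuracy worth noting: in the case $x_i \notin F_j$ the preservation of the column sums follows from $b_{ij}+b_{\tilde{i}j}=0$ together with anti-symmetry alone, not from the no-path hypothesis; that hypothesis is instead what drives the main case, where it forces the individual signs of $b_{ki}$ and $b_{\tilde{k}i}$ to agree with the sign of their sum and makes $F_i^Q\rvert_{x_j \leftarrow 0}$ a single monomial, so that Step 2 reduces to monomial division and can be folded into Step 3, exactly as the paper observes.
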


\begin{proof}
Let $j \in \{1,\ldots, n\}$. We will split the proof into two parts depending on whether $x_i \notin F_j^Q$ or $x_i \in F_j^Q$. \newline

If $x_i \notin F_j^Q$ then LP mutation at $i$ does not alter the exchange polynomial $F_j^Q$. I.e, $(F_j^Q)' = F_j^Q$. Therefore for quiver mutation to coincide with LP mutation we require that $F_j^{\mu_i\circ\mu_{\tilde{i}}(Q)} = F_j^Q$. It suffices to show that $$b_{kj}' + b_{\tilde{k}j}' := (\mu_i\circ\mu_{\tilde{i}}(Q))_{kj} + (\mu_i\circ\mu_{\tilde{i}}(Q))_{\tilde{k}j} = b_{kj} + b_{\tilde{k}j} \hspace{4mm} \forall k \in \{1,\ldots,n\}.$$

Below we check this holds when $k=i$ and $k \neq i$. Note that $x_i \notin F_j \implies  b_{ij}+b_{\tilde{i}j} = 0$.

\begin{itemize}

\item $(k=i)$  $b_{ij}' + b_{\tilde{i}j}' = -b_{ij} - b_{\tilde{i}j} = 0 = b_{ij} + b_{\tilde{i}j}$.

\item $(k \neq i)$ Firstly note that because mutation at $i$ and $\tilde{i}$ are independent of one another we have $$b_{kj}' := (\mu_i\circ\mu_{\tilde{i}}(Q))_{kj} = (\mu_i(Q))_{kj} + (\mu_{\tilde{i}}(Q))_{kj} - b_{kj}  =$$ $$ b_{kj} + [-b_{ki}]_{+}b_{ij} + b_{ki}[b_{ij}]_{+} + [-b_{k\tilde{i}}]_{+}b_{\tilde{i}j} + b_{k\tilde{i}}[b_{\tilde{i}j}]_{+}.$$

Now, by applying the fact that $b_{ij} = -b_{\tilde{i}j}$ we obtain the following.

$$b_{kj}'+b_{\tilde{k}j}' = b_{kj} + b_{\tilde{k}j} + b_{ij}([-b_{ki}]_+ - [-b_{k\tilde{i}}]_+ +[-b_{\tilde{k}i}]_+ -[-b_{\tilde{k}\tilde{i}}]_+) +$$ $$ [-b_{ij}]_+(b_{k\tilde{i}} + b_{\tilde{k}\tilde{i}}) + [b_{ij}]_+(b_{ki} + b_{\tilde{k}i}) \stackrel{\text{by anti-symmetry}}{=} $$ $$b_{kj} + b_{\tilde{k}j} + b_{ij}([-b_{ki}]_+ - [b_{\tilde{k}i}]_+ +[-b_{\tilde{k}i}]_+ -[b_{ki}]_+) +$$ $$ [-b_{ij}]_+(b_{k\tilde{i}} - b_{ki}) + [b_{ij}]_+(b_{ki} - b_{k\tilde{i}}).$$

Using the fact that $[a]_+ - [-a]_+ = a$ we see that $$b_{kj}'+b_{\tilde{k}j}' = b_{kj} + b_{\tilde{k}j}.$$

So indeed, $F_j^{\mu_i\circ\mu_{\tilde{i}}(Q)} = F_j^Q$ in the case $x_i \notin F_j$.

\end{itemize}

If $x_i \in F_j$ then \textit{w.l.o.g} we shall assume $b_{ij}+b_{\tilde{i}j} >0$ and $b_{ij}>0$. By skew symmetry we have $b_{ji}<0$. Also, $b_{\tilde{j}i} \leq 0$ follows from $b_{ij}>0$ and the assumption that there is no path $a \rightarrow i \rightarrow \tilde{a}$. From this we get the following:

\begin{center}

$F_i^Q |_{x_j \leftarrow 0} = \displaystyle \prod_{b_{ki}+b_{\tilde{k}i} >0} x_k^{b_{ki}+b_{\tilde{k}i}}$

\end{center}

From here we see \textbf{(Step 1)} of LP mutation gives us:

\begin{center}

$G_j^Q  = \displaystyle \Bigg( \prod_{\stackrel{b_{kj}+b_{\tilde{k}j} >0}{k \neq i}} x_k^{b_{kj}+b_{\tilde{k}j}} \Bigg) \Bigg( \frac{\prod_{b_{ki}+b_{\tilde{k}i} >0} x_k^{b_{ki}+b_{\tilde{k}i}}}{x_i'} \Bigg)^{b_{ij}+b_{\tilde{i}j}} + \prod_{b_{kj}+b_{\tilde{k}j} <0} x_k^{-(b_{kj}+b_{\tilde{k}j})}$
\end{center}

We make the observation that since $F_i^Q |_{x_j \leftarrow 0}$ is a monomial then \textbf{(Step 2)} of LP mutation can be incorporated into \textbf{(Step 3)}. Therefore to obtain $(F_j^Q)'$ we are left with the task of finding a monic Laurent monomial $M$ such that $(F_j^Q)' := MG_j^Q \in \mathbb{Z}[x_1',\ldots, x_n']$ and is not divisible by any $x_k'$. We shall determine the exponent of the variable $x_k$ in $(F_j^Q)'$ by splitting the task into three cases. For each case we check the exponent agrees with the one in the exchange polynomial $F_j^{\mu_{\tilde{i}}\circ\mu_i(Q)}$ obtained via quiver mutation. \newline

\noindent \underline{\textbf{Case 1}}: $b_{ki} + b_{\tilde{k}i} \leq 0$. \newline

This means there is no $x_k$ term in $F_i^Q |_{x_j \leftarrow 0}$. So the $x_k$ exponent remains unchanged from LP mutation. That being so, for LP mutation to agree with double quiver mutation we require that $b_{kj}' + b_{\tilde{k}j}' = b_{kj} + b_{\tilde{k}j}$. Since $b_{ki} + b_{\tilde{k}i} \leq 0$ and there is no path $k \rightarrow i \rightarrow \tilde{k}$ then $b_{ki}, b_{\tilde{k}i} \leq 0$. So $b_{jk}' = b_{kj}$, $b_{\tilde{k}j}' = b_{\tilde{k}j}$, and we therefore have agreement. \newline

\noindent \underline{\textbf{Case 2}}: $b_{ki} + b_{\tilde{k}i} > 0$ and $b_{kj} + b_{\tilde{k}j} \geq 0$. \newline

This means we get an $x_k$ term in the first monomial of $G_j^Q$, and it has exponent $b_{kj} + b_{\tilde{k}j} + (b_{ki}+b_{\tilde{k}i})(b_{ij}+b_{\tilde{i}j})$. To determine what happens with quiver mutation recall our assumption that $b_{ij} > 0$. Since there is no path $a \rightarrow i \rightarrow \tilde{a}$ for any vertex $a$ of $Q$, then $b_{ij}, b_{i\tilde{j}} \geq 0$. Likewise, because $b_{ki} + b_{\tilde{k}i} > 0 $, we get $b_{ki}, b_{\tilde{k}i} \geq 0$. Hence for quiver mutation we obtain $$b'_{kj} = b_{kj} + b_{ki}b_{ij} - b_{j\tilde{i}}b_{\tilde{i}k} $$ $$b'_{\tilde{k}j} = b_{\tilde{k}j} + b_{\tilde{k}i}b_{ij} - b_{j\tilde{i}}b_{\tilde{i}\tilde{k}}. $$

\noindent Using anti-symmetry and skew-symmetry we see $$b'_{kj} + b'_{\tilde{k}j} = b_{kj} + b_{\tilde{k}j} + (b_{ki}+b_{\tilde{k}i})(b_{ij}+b_{\tilde{i}j}) > 0$$.

\noindent Consequently, LP and quiver mutation coincide for case 2. \newline

\noindent \underline{\textbf{Case 3}}: $b_{ki} + b_{\tilde{k}i} > 0$ and $b_{kj} + b_{\tilde{k}j} \leq 0$. \newline

This means there will be an $x_k$ term in both monomials of $G_j^Q$ and after dividing out by an appropriate power of $x_k$, we are left with $x_k$ having exponent $b_{kj} + b_{\tilde{k}j} + (b_{ki}+b_{\tilde{k}i})(b_{ij}+b_{\tilde{i}j})$ in $(F_j^Q)'$. The variable $x_k$ appears in the left or right monomial of $(F_j^Q)'$ depending on whether $(b_{ki}+b_{\tilde{k}i})(b_{ij}+b_{\tilde{i}j}) \geq -(b_{kj} + b_{\tilde{k}j})$ or $(b_{ki}+b_{\tilde{k}i})(b_{ij}+b_{\tilde{i}j}) \leq -(b_{kj} + b_{\tilde{k}j})$, respectively. Just as in case 2 we observe that double mutating the quiver $Q$ yields $$b'_{kj} + b'_{\tilde{k}j} = b_{kj} + b_{\tilde{k}j} + (b_{ki}+b_{\tilde{k}i})(b_{ij}+b_{\tilde{i}j}).$$ Thus showing LP mutation agrees with double quiver mutation for case 3. \newline

Finally, in $(F_j^Q)'$ the $x_i'$ variable appears in the right monomial with exponent $b_{ij} + b_{\tilde{i}j}$. This agrees with quiver mutation since $b_{ij}' + b_{\tilde{i}j}' = -(b_{ij}+b_{\tilde{i}j}) <0$. This concludes the proof of the proposition.

\end{proof}

\subsection{Triangulations and their LP structure.}

We turn our attention back to triangulations of $(S,M)$ and show they slot into an LP structure. We achieve this by proving the adjacency quiver $Q_{\overline{T}}$ satisfies the conditions demanded in Proposition \ref{quivermut}, for each triangulation $T$ of $(S,M)$. 
Of course, we must also show that the exchange polynomials $F_1^{Q_{\overline{T}}},\ldots, F_n^{Q_{\overline{T}}}$ are the exchange polynomials of their corresponding arcs in $T$; this is settled by Lemma \ref{correctpolys}.
Note that, for triangulations of $(S,M)$ to slot into an LP structure, Proposition \ref{quivermut} requires that for each triangulation $T$ of our bordered surface we have:

\begin{itemize}

\item If $i$ is a $t$-mutable arc in $T$ then there is no path $k \rightarrow i \rightarrow \tilde{k}$ in $Q_{\overline{T}}$ for any vertex $k$.

\item The exchange polynomials $F_1^{Q_{\overline{T}}},\ldots, F_n^{Q_{\overline{T}}}$ associated to $T$ are irreducible.

\item $F_i^{Q_{\overline{T}}} = \hat{F}_i^{Q_{\overline{T}}}$ for each exchange polynomial associated to $T$.
\end{itemize}

The first two conditions are verified by Lemma \ref{breakage} and Lemma \ref{validseed}, respectively. The majority of this subsection is spent proving the third condition. We achieve this by first showing the property is equivalent to the exchange polynomials of $T$ being distinct, see Lemma \ref{nonormalisation}. From here, via Lemmas \ref{noarrows}, \ref{reverse}, \ref{cancellation} \ref{weight} \ref{equals}, we discover all bordered surfaces that emit triangulations producing non-distinct exchange polynomials. In the interest of maximal generality we allow the possibility that boundary segments do not receive variables; in which case the boundary segment is instead allocated the constant value $1$, and the corresponding vertex in the adjacency quiver is deleted.

\begin{lem}
\label{breakage}
For a triangulation $T$ of $(S,M)$ there are vertices $i,k$ of $Q_{\overline{T}}$ with $k \rightarrow i \rightarrow \tilde{k}$ \textit{if and only if} $T$ contains the M\"obius strip with two marked points, $M_2$, with $i$ being the non t-mutable arc of $M_2$. See Figure \ref{badvertex} below.

\end{lem}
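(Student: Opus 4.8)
The plan is to prove this biconditional by a careful local analysis of the triangulation $T$ on the non-orientable surface $(S,M)$ and how its arcs lift to the orientable double cover $\overline{(S,M)}$. The configuration $k \rightarrow i \rightarrow \tilde{k}$ in the quiver $Q_{\overline{T}}$ is a purely combinatorial statement about triangles of $\overline{T}$, so I would first translate it into geometric data downstairs. Recall from the discussion preceding Definition \ref{antisymmetric} that arrows in $Q_{\overline{T}}$ record adjacencies of arcs within the triangles of $\overline{T}$, subject to the anti-symmetry relation $i \rightarrow j \iff \tilde{j} \rightarrow \tilde{i}$. The existence of a path $k \rightarrow i \rightarrow \tilde{k}$ means $i$ shares a triangle with $k$ and another triangle with $\tilde{k}$; since $k$ and $\tilde{k}$ are the two distinct lifts of a single arc of $T$, this forces $i$ to be adjacent to both lifts of one downstairs arc. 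The key idea is that this doubled adjacency can only happen when the two triangles of $\overline{T}$ incident to $i$ project to a single self-adjacent region downstairs, which is precisely the mechanism by which a M\"obius strip arises.

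For the \emph{if} direction I would start from the explicit picture in Figure \ref{badvertex}: the M\"obius strip $M_2$ with two marked points, triangulated with $i$ as the non $t$-mutable arc. I would lift this strip to the double cover, observing that a single cross-cap becomes a cylinder glued with a half-twist (as in Figures \ref{surfaceandcylinder} and \ref{surfaceglueing}), so the arc $i$ lifts to two arcs $i, \tilde{i}$ and the other arc $k$ of the strip lifts to $k, \tilde{k}$. Tracing the triangles through this half-twisted gluing, one sees directly that $i$ becomes adjacent to $k$ in one lifted triangle and to $\tilde{k}$ in the twin triangle, producing the arrows $k \rightarrow i$ and $i \rightarrow \tilde{k}$ with the correct orientations; this is a concrete, picture-level verification. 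The reason $i$ is the non $t$-mutable arc is exactly that flipping it would leave the class of triangulations (it flips to a one-sided closed curve), which is consistent with the obstruction the path represents.

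For the \emph{only if} direction, which I expect to be the main obstacle, I would argue that a path $k \rightarrow i \rightarrow \tilde{k}$ forces the local configuration to be $M_2$. Here the delicate point is ruling out all other ways that $i$ could be adjacent to both lifts $k$ and $\tilde{k}$ of a single arc. I would use the fact that $\overline{T}$ is a genuine triangulation of an orientable surface together with the deck involution $x \mapsto \tilde{x}$: the two triangles of $\overline{T}$ containing $i$ must be swapped or related by this involution in a way compatible with $i$ touching both $k$ and $\tilde{k}$. I would show that projecting these two triangles down to $(S,M)$ yields a region bounded by the arc (or arcs) whose self-gluing, once we account for the half-twist on the relevant cylinder/cross-cap, is forced to be a M\"obius strip containing exactly two marked points, with $i$ the arc that cuts it and cannot be $t$-mutated. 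The case analysis must also exclude the forbidden anti-self-folded triangle (Figure \ref{antiself}), which is why the new definition of quasi-arc is essential: it is precisely the absence of arrows $i \rightarrow \tilde{i}$ that pins the configuration down to $M_2$ rather than a degenerate self-folded piece. Completing this classification, and verifying that no marked points beyond the two are involved, is the crux of the argument.
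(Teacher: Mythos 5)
Your \emph{if} direction is fine and matches what the paper does implicitly: one lifts the triangulation of $M_2$ shown in Figure \ref{badvertex} through the half-twisted gluing and reads off the path $k \rightarrow i \rightarrow \tilde{k}$ directly from the lifted triangles. The problem is the \emph{only if} direction, where you correctly identify the classification of local configurations as ``the crux of the argument'' but never actually carry it out --- you describe what would need to be shown (that the two triangles incident to $i$ project to a self-glued region forced to be $M_2$) without supplying the mechanism that forces it. As written, this is a plan, not a proof, and the plan is also harder than necessary because you centre the analysis on $i$: knowing that $i$ meets $k$ in one triangle and $\tilde{k}$ in another does not by itself pin down the other sides of those triangles, so your case analysis would have to chase several possibilities. (A smaller inaccuracy: the deck involution does not relate the two triangles containing $i$ to each other; it relates the triangles containing $i$ to those containing $\tilde{i}$.)

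The missing idea, which makes the paper's proof two moves long, is to apply anti-symmetry so as to pivot from $i$ to $k$: from $i \rightarrow \tilde{k}$ one gets $k \rightarrow \tilde{i}$, hence the configuration $i \leftarrow k \rightarrow \tilde{i}$. Now $k$ has outgoing arrows to both $i$ and $\tilde{i}$, which in a triangulation quiver means the two triangles glued along $k$ contain $i$ and $\tilde{i}$ respectively --- that is, $k$ is the diagonal of a quadrilateral $(i,a,\tilde{i},\tilde{b})$, with $a,\tilde{b} \notin \{i,\tilde{i},k,\tilde{k}\}$. Anti-symmetry then produces the twin quadrilateral $(b,i,\tilde{a},\tilde{i})$ with diagonal $\tilde{k}$, and gluing the two quadrilaterals according to their edge labels yields the cylinder of Figure \ref{badvertex}, whose $\mathbb{Z}_2$-quotient is exactly the M\"obius strip $M_2$ with $i$ the non $t$-mutable arc. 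This reconstruction is forced --- there is no residual case analysis, no separate appeal to the exclusion of anti-self-folded triangles at this point, and the count of two marked points falls out of the quotient rather than needing separate verification. You should rework your \emph{only if} direction along these lines; without the anti-symmetry pivot and the explicit quadrilateral gluing, your argument has a genuine gap precisely at the step you flagged as the hard one.
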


\begin{proof}

To prove this lemma we reconstruct (part of) the surface $(S,M)$ using blocks. Namely, we use the quiver $Q$ to determine the adjacency of triangles in $T$. By anti-symmetry note that $k \rightarrow i \rightarrow \tilde{k}$ implies there is the path $i \leftarrow k \rightarrow \tilde{i}$. As a consequence there must be the quadrilateral $(i,a,\tilde{i},\tilde{b})$ with diagonal $k$ for some $a$ and $\tilde{b}$ not equal to $\{i,\tilde{i},k,\tilde{k}\}$, see Figure \ref{square}. By antisymmetry we also have the quadrilateral $(b,i,\tilde{a},\tilde{i})$ with diagonal $\tilde{k}$.

\begin{figure}[H]
\begin{center}
\includegraphics[width=12cm]{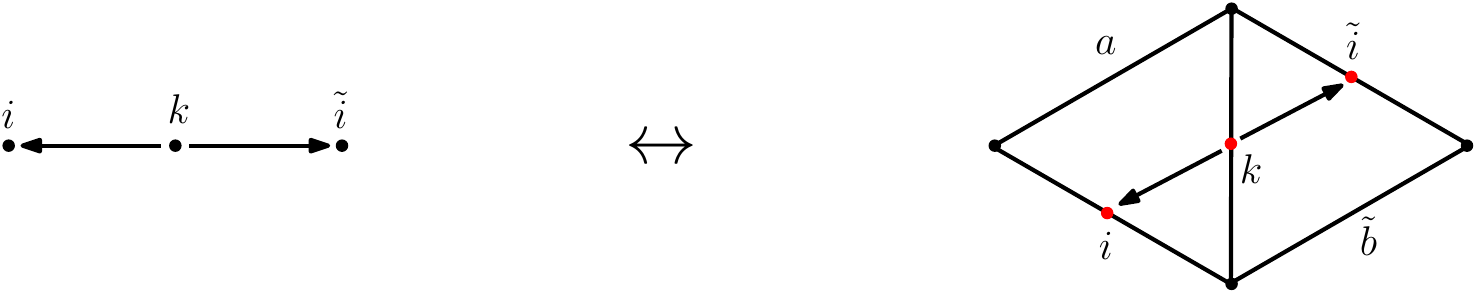}
\caption{If $i \leftarrow k \rightarrow \tilde{i}$ is a sub quiver of an adjacency quiver then the surface must have the local configuration shown on the right.}
\label{square}
\end{center}
\end{figure}

Glueing these two quadrilaterals together, according to their labels, yields the cylinder shown in Figure \ref{badvertex}. Taking the $\mathbb{Z}_2$-quotient of this leaves us with the M\"obius strip $M_2$ which is also depicted in Figure \ref{badvertex}.

\end{proof}

\begin{figure}[H]
\begin{center}
\includegraphics[width=13cm]{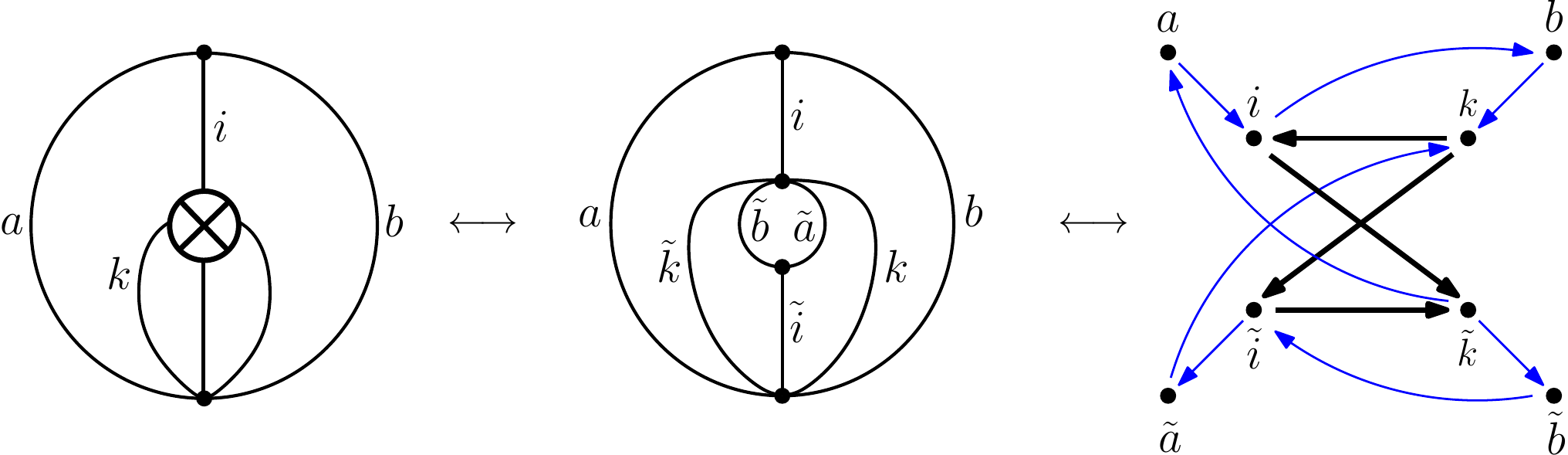}
\caption{A triangulation of the M\"obius strip $M_2$; its lifted triangulation; and the adjacency quiver of its lifted triangulation.}
\label{badvertex}
\end{center}
\end{figure}

\begin{lem}
\label{correctpolys}
Let $T$ be a triangulation of $(S,M)$ and $Q_{\overline{T}}$ the corresponding anti-symmetric quiver arising from the lifted triangulation $\overline{T}$. Then the exchange polynomials $\{F_1^{Q_{\overline{T}}},\ldots, F_n^{Q_{\overline{T}}}\}$ coincide with the exchange polynomials of the arcs in $T$ they are associated with.

\end{lem}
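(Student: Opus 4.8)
The plan is to show that for each $t$-mutable arc $i$ in $T$, the exchange polynomial $F_i^{Q_{\overline{T}}}$ produced by the anti-symmetric quiver formula agrees with the lambda-length exchange relation assigned to $i$ by the flip rules of Definition \ref{lambdarules} (in their adjusted form). Recall that $F_i^{Q_{\overline{T}}}$ is built from the column sums $b_{ki} + b_{\tilde{k}i}$ of the skew-symmetric matrix $B$ encoding $Q_{\overline{T}}$. Since $\overline{T}$ is a genuine triangulation of the orientable double cover, its adjacency quiver $Q_{\overline{T}}$ is the ordinary adjacency quiver of a triangulated orientable surface, for which the entries $b_{ki}$ count (signed) arrows arising from the triangles of $\overline{T}$ sharing the arcs $k$ and $i$. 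The heart of the argument is therefore a local, triangle-by-triangle bookkeeping: I would fix a $t$-mutable arc $i$ of $T$, identify the (at most two) triangles of $T$ adjacent to $i$, lift this local picture to $\overline{T}$, and compute the combined contribution $b_{ki} + b_{\tilde{k}i}$ for each neighbouring arc $k$.

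First I would set up the lift carefully. Since $i$ is $t$-mutable, it lifts to two arcs $i, \tilde{i}$ in $\overline{T}$, and each triangle of $T$ incident to $i$ lifts to a pair of twin triangles $\Delta, \tilde{\Delta}$ in $\overline{T}$ (using the convention $\tilde{\tilde{i}} = i$ and the relation $i \to j \iff \tilde{j} \to \tilde{i}$ established before Definition \ref{antisymmetric}). For a neighbouring arc $k$ that lies in a triangle $\Delta$ with $i$, the arrow between $k$ and $i$ contributed by $\Delta$ is mirrored by an arrow between $\tilde{k}$ and $\tilde{i}$ in $\tilde{\Delta}$; by antisymmetry these combine so that the quantity $b_{ki} + b_{\tilde{k}i}$ records precisely the total (upstairs) incidence of $k$ against $i$, which downstairs is exactly the number of sides of the downstairs triangle(s) at $i$ occupied by $k$. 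I would then read off that the monomials $\prod_{b_{ki}+b_{\tilde{k}i}>0} x_k^{b_{ki}+b_{\tilde{k}i}}$ and $\prod_{b_{ki}+b_{\tilde{k}i}<0} x_k^{-(b_{ki}+b_{\tilde{k}i})}$ reproduce the two products of arc-variables appearing on opposite sides of the flip relation pictured in cases (1)--(3) of the adjusted Definition \ref{lambdarules}.

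The verification then splits along the combinatorial type of the flip, matching the three cases of the revised exchange rules. For case (1), where $i$ is an ordinary arc between two distinct triangles not flipping to a one-sided curve, the lifted local picture is two genuine quadrilaterals and the computation is identical to the Fomin--Shapiro--Thurston orientable case, so the quiver formula yields the expected Ptolemy-type binomial. The genuinely new work is in the cases involving the cross-caps, case (2) (an arc flipping to a one-sided closed curve) and case (3) (an arc crossing a one-sided closed curve), where one must track how the half-twisted gluing of the double cover identifies an arc $k$ adjacent to $i$ with the lift $\tilde{k}$ on the other sheet, so that a single downstairs boundary/arc segment can contribute to \emph{both} $b_{ki}$ and $b_{\tilde{i}k}$; the antisymmetry relation is exactly what forces these two contributions to add (rather than cancel) in $b_{ki} + b_{\tilde{k}i}$, producing the correct exponent and hence the correct monomial. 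I expect this cross-cap bookkeeping to be the main obstacle: the subtlety is ensuring that the column-sum exponents coming out of the anti-symmetric quiver agree in sign and magnitude with the length relations that Dupont and Palesi derived geometrically, and in particular that no spurious boundary factor survives (this is precisely the phenomenon that motivated the adjusted definition of quasi-arcs eliminating the reducible factor $b$ in $F_\gamma = b(x+y)$). Once each of the three local configurations has been checked to reproduce the corresponding relation of Definition \ref{lambdarules}, the lemma follows, since every $t$-mutable arc of $T$ falls into one of these cases.
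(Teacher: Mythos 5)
Your treatment of the generic case is essentially the paper's: when the lifted local picture around $i$ consists of honest twin quadrilaterals, no cancellation occurs in the column sums $b_{ki}+b_{\tilde{k}i}$, and under the identification $x_{\tilde{k}}=x_k$ the quiver formula reduces to the standard Fomin--Shapiro--Thurston Ptolemy exchange; the paper simply cites the orientable theory at this point rather than redoing the bookkeeping. But there is a genuine gap in scope: the lemma concerns all $n$ exchange polynomials of $T$, not only those of t-mutable arcs, and your opening restriction to t-mutable $i$ excludes exactly the arcs where something new happens --- an arc that flips to a one-sided closed curve (your case (2)) is by definition \emph{not} t-mutable. The paper covers these via Lemma \ref{breakage}: a path $k\rightarrow i\rightarrow\tilde{k}$ occurs in $Q_{\overline{T}}$ if and only if $i$ is the inner, non-t-mutable arc of an embedded M\"obius strip $M_2$, as in Figure \ref{badvertex}, and that dichotomy is what makes the case analysis exhaustive. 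Your proposal never invokes this (or any substitute), so the exceptional local configuration is never actually identified. Note also that your case (3) is vacuous here: $T$ is a triangulation, hence contains no one-sided closed curve, so no arc of $T$ exchanges by rule (3).

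More substantively, your sign claim in the cross-cap case is backwards, and if it were true the lemma would be false. You assert that antisymmetry forces the two lifted contributions of a neighbour $k$ to \emph{add} rather than cancel. For the inner arc $i$ of $M_2$ the opposite happens: the path $k\rightarrow i\rightarrow\tilde{k}$ means precisely $b_{ki}=+1$ and $b_{\tilde{k}i}=-1$, so the column sum vanishes, $x_k$ drops out, and one gets $F_i^{Q_{\overline{T}}}=x_a+x_b$ --- which is the correct exchange relation for an arc flipping to a one-sided closed curve. An ``always add'' computation would instead produce a spurious $x_k$-term. (Addition is correct only in the other column: for the t-mutable arc $k$ of $M_2$ the twin contributions of $i$ and $\tilde{i}$ do add, giving $F_k = x_i^2 + x_a x_b$, consistent with the quadrilateral $(i,a,\tilde{i},\tilde{b})$ upstairs.) So the ``cross-cap bookkeeping'' you flag as the main obstacle is exactly where your sketch goes wrong; the fix is the paper's route, namely to separate the two cases by the presence or absence of a path $k\rightarrow i\rightarrow\tilde{k}$ and to compute the $M_2$ configuration explicitly.
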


\begin{proof}

Let $(i,\tilde{i})$ be a twin pair of vertices in $Q_{\overline{T}}$ and consider the associated exchange polynomial $F_i^{Q_{\overline{T}}}$. If there is no path $j \rightarrow i \rightarrow \tilde{j}$ for any vertex $j$ in $Q_{\overline{T}}$ then, by Lemma \ref{breakage}, all arcs will flip to arcs. Moreover, $F_i^{Q_{\overline{T}}}:= \prod_{b_{ji}>0}x_j^{b_{ji}} + \prod_{b_{ji}<0}x_j^{-b_{ji}}$ (with the identification $x_i = x_{\tilde{i}}$) so from the standard theory of cluster algebras from surfaces we see $F_i^{Q_{\overline{T}}}$ describes how the length of the arc $i$ changes under a flip.
If there is a path $j \leftarrow i \leftarrow \tilde{j}$ then, by Lemma \ref{breakage}, locally the arc $i$ will be contained in the triangulation of $M_2$ shown in Figure \ref{badvertex}. In particular, it has the exchange polynomial $F_i^{Q_{\overline{T}}} = x_a + x_b$ which does indeed describe how the length of the arc $i$ changes under a flip. 
\end{proof}

For a seed coming from an anti-symmetric quiver $Q$ we noted that the seed may not be a valid LP seed due to potential reducibility of the exchange polynomials. However, as shown by the following lemma, for an anti-symmetric quiver arising from a triangulation of $(S,M)$ we always get irreducibility.

\begin{lem}
\label{validseed}
Let $T$ be a triangulation of $(S,M)$. Then $F_j^{Q_{\overline{T}}}$ is irreducible in $\mathbb{ZP}[x_1,\ldots,x_{n}]$ for any $j$. In particular, $\Sigma_{Q_{\overline{T}}} := (\{x_1,\ldots, x_n\}, \{F_1^{Q_{\overline{T}}},\ldots, F_n^{Q_{\overline{T}}}\})$ is a valid LP seed. 

\end{lem}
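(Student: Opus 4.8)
\textbf{The plan is to} show irreducibility of $F_j^{Q_{\overline{T}}}$ by reducing to the two structural cases that Lemma \ref{breakage} and Lemma \ref{correctpolys} have already isolated, and in each case appealing to the gcd criterion of Proposition \ref{irreducible}. The exchange polynomial $F_j^{Q_{\overline{T}}}$ is built from the column sums $b_{ij}+b_{\tilde{i}j}$ of the anti-symmetric quiver, and Proposition \ref{irreducible} guarantees irreducibility in $\mathbb{Z}[x_1,\ldots,x_n]$ (hence also in $\mathbb{ZP}[x_1,\ldots,x_n]$, since $\mathbb{ZP}$ is a polynomial ring over $\mathbb{Z}$ in the frozen boundary variables, and an element not involving those variables is irreducible in the larger ring iff it is in the smaller) precisely when these column sums have overall gcd equal to $1$.

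\textbf{First I would} dispose of the generic case. If there is no path $j \rightarrow i \rightarrow \tilde{j}$ for any vertex $i$, then by Lemma \ref{correctpolys} the polynomial $F_j^{Q_{\overline{T}}}$ is the ordinary cluster-algebra exchange binomial $\prod_{b_{ij}>0} x_i^{b_{ij}} + \prod_{b_{ij}<0} x_i^{-b_{ij}}$ coming from the triangle structure around the arc $j$. Here the arc $j$ sits in a quadrilateral, so it borders (at most) four arcs and each relevant exponent is $1$; the two monomials are the products of the lambda lengths of the opposite pairs of sides, and since at least one genuine side appears with exponent exactly $1$, the gcd of the exponent vector is $1$. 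By Proposition \ref{irreducible} the polynomial is irreducible. \textbf{Then I would} handle the exceptional case flagged by Lemma \ref{breakage}: when $T$ contains the M\"obius strip $M_2$ and $j=i$ is its non-$t$-mutable arc. By Lemma \ref{correctpolys} the exchange polynomial is then exactly $F_i^{Q_{\overline{T}}} = x_a + x_b$, a sum of two distinct variables, which is manifestly irreducible. This exhausts both cases produced by Lemma \ref{breakage}, so $F_j^{Q_{\overline{T}}}$ is irreducible for every $j$.

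\textbf{The main obstacle} I anticipate is not the irreducibility argument itself but the bookkeeping needed to confirm the gcd hypothesis of Proposition \ref{irreducible} really does hold in the generic case. One must rule out the possibility that every arc incident to $j$ appears with exponent a common multiple greater than $1$ --- i.e. that the quadrilateral around $j$ degenerates so that the same arc is glued to itself on opposite sides, forcing a squared monomial. I would argue that such a configuration would require a self-folded or anti-self-folded triangle, which the new definition of quasi-arc (Definition \ref{newarcs}) and the absence of arrows $i \rightarrow \tilde{i}$ in an anti-symmetric quiver (Definition \ref{antisymmetric}) explicitly forbid; hence at least one exponent equals $1$ and the gcd is forced to be $1$. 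Finally, a small remaining point is the passage from irreducibility over $\mathbb{Z}$ to irreducibility over $\mathbb{ZP}$, which follows since the frozen boundary variables are transcendental over the fraction field and $F_j^{Q_{\overline{T}}}$ involves only the mutable variables $x_1,\ldots,x_n$, so adjoining the boundary variables cannot introduce a nontrivial factorisation. Once irreducibility is established for all $j$, the displayed seed $\Sigma_{Q_{\overline{T}}}$ satisfies the definition of a valid LP seed, completing the proof.
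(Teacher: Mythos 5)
Your case split misses the configurations that actually make this lemma delicate, and the structural claim you use to force $\gcd=1$ is false. Squared exponents in $F_j^{Q_{\overline{T}}}$ do \emph{not} require a self-folded or anti-self-folded triangle: they arise from the M\"obius strip $M_2$ of Lemma \ref{breakage}, which is perfectly legal under the new definitions. Concretely, take $j$ to be the \emph{outer} arc of the $M_2$ configuration of Figure \ref{badvertex} (the vertex called $k$ there). Then $j$ is adjacent to both lifts $i$ and $\tilde{i}$ of the non-t-mutable arc, the two arrows point the same way, and $b_{ij}+b_{\tilde{i}j}=\pm 2$, giving $F_j = x_a x_b + x_i^2$. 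No path $j \rightarrow \cdot \rightarrow \tilde{j}$ is forced at $j$ itself, so this lands squarely in your ``generic'' case, where your assertion that ``each relevant exponent is $1$'' fails; your exceptional case only covers the \emph{middle} arc of $M_2$, whose polynomial is $x_a + x_b$. In this particular example the exponent vector is $(1,1,2)$, so the gcd happens to be $1$ and Proposition \ref{irreducible} still rescues the conclusion --- but not by the argument you gave.

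More seriously, the lemma is proved in the paper in the generality announced just before it: boundary segments are allowed to receive no variable, in which case they are set to $1$ and their quiver vertices deleted. Then the gcd can genuinely fail to be $1$: in the $M_2$ example above with $a,b$ unlabelled boundary one gets $F_j = x_i^2 + 1$; the polynomial $x_i^2 + x_k^2$ also occurs, and if all column sums vanish one gets $F_j = 2$. Proposition \ref{irreducible} says nothing about any of these, so they need separate (easy, direct) irreducibility checks. This is exactly how the paper's proof runs: every vertex of $Q_{\overline{T}}$ has at most two incoming and two outgoing arrows, so $\gcd(b_{1j}+b_{\tilde{1}j},\ldots,b_{nj}+b_{\tilde{n}j}) \in \{1,2,\infty\}$; gcd $1$ is handled by Proposition \ref{irreducible}, gcd $\infty$ gives $F_j = 2$, and gcd $2$ leaves only $x_i^2+1$ and $x_i^2+x_k^2$, each irreducible by inspection. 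Your reduction to Proposition \ref{irreducible} alone cannot close these cases, so your proof has a genuine gap, even though your side remarks (irreducibility over $\mathbb{Z}$ versus $\mathbb{ZP}$, and the middle-arc polynomial $x_a+x_b$) are correct.
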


\begin{proof}

The quiver $Q_{\overline{T}}$ coming from the lifted triangulation $\overline{T}$ can have at most 2 ingoing and 2 outgoing arrows at any one vertex. Hence, $gcd(b_{1j}+b_{\tilde{1}j}, \ldots, b_{mj}+b_{\tilde{m}j}) \in \{1,2,\infty\}$.

If $gcd$ is $1$ then Proposition \ref{irreducible} yields the irreducibility of $F_j^{Q_{\overline{T}}}$.

If $gcd$ is $\infty$ then $b_{ij} + b_{\tilde{i}j} = 0$ for all $i$. So $F_j^{Q_{\overline{T}}} = 2$, which is irreducible.

If $gcd$ is $2$ then due to there being at most 2 ingoing and 2 outgoing arrows at $j$ the only possibilities for $F_j^{Q_{\overline{T}}}$ are $x_i^2 +1$ and $x_i^2 + x_k^2$, which are both irreducible.
\end{proof}

Recall that the goal of this subsection has been to show triangulations fit into an LP structure by invoking Proposition \ref{quivermut}. To accomplish this we are left to prove that $\hat{F_i} = F_i$ for each $F_i$ in $\Sigma_{Q_{\overline{T}}}$. By the following lemma we may equivalently prove that the exchange polynomials in each seed $\Sigma_{Q_{\overline{T}}}$ are distinct.

\begin{lem}
\label{nonormalisation}

Let $T$ be a triangulation, $\Sigma_{Q_{\overline{T}}}$ its associated LP seed, and $i \in \{1,\ldots, n\}$. Then $\hat{F}_i = F_i$ \textit{if and only if} $F_i \neq F_j$ for any $j \neq i$.

\end{lem}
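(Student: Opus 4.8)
The plan is to unwind the definition of the normalised polynomial and pin down exactly when each factor in its denominator survives. Writing $\hat F_i = F_i/\prod_{k\neq i}x_k^{a_k}$, where $a_k\ge 0$ is the maximal power of $F_k$ dividing $F_i\rvert_{x_k\leftarrow F_k/x}$, we have $\hat F_i = F_i$ precisely when $a_k=0$ for every $k\neq i$. So the whole lemma reduces to the dichotomy, to be established for each fixed $k\neq i$,
$$ a_k > 0 \iff F_k = F_i. $$
Granting this, $\hat F_i \neq F_i$ is equivalent to some $a_k>0$, equivalently to $F_k=F_i$ for some $k\neq i$, which is exactly the failure of the right-hand condition of the lemma.

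Before proving the dichotomy I would record the two structural inputs that are available. By Lemma \ref{validseed} each $F_k$ is irreducible in $\mathbb{ZP}[x_1,\dots,x_n]$, and by the very definition of $F_k^{Q_{\overline T}}$ it is a sum $F_k=P_k+Q_k$ of two monic monomials (either of which may equal $1$). Moreover, being part of a valid LP seed, $F_k\notin\{x_1,\dots,x_n\}$; so $F_k$ is an irreducible non-unit that is not an associate of any single variable, and hence cannot divide a monic monomial in the UFD $R[x_1,\dots,x_{k-1},x^{-1},x_{k+1},\dots,x_n]$. These are the only facts I expect to need.

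The reverse implication ($\Leftarrow$) is quick. If $F_k=F_i$, then since $F_i$ never involves $x_i$ and $F_k$ never involves $x_k$, the equality forces $x_k\notin F_i$. Thus $F_i\rvert_{x_k\leftarrow F_k/x}=F_i=F_k$, which is divisible by $F_k$ exactly once by irreducibility, giving $a_k=1>0$.

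For the forward implication ($\Rightarrow$) I would split on whether $x_k$ occurs in $F_i$. If $x_k\notin F_i$ the substitution is vacuous, so $a_k>0$ means $F_k\mid F_i$; irreducibility of $F_i$ together with non-unitality of $F_k$ then forces $F_i=F_k$, as desired. The main obstacle is the remaining case $x_k\in F_i$, where I must show $a_k=0$, so that this case never produces a nontrivial exponent. Here $x_k$ lies in exactly one of the two monomials of $F_i$, say $P_i$, with exponent $e\ge 1$, and the substitution yields
$$ F_i\rvert_{x_k\leftarrow F_k/x} = P_i'\,F_k^{\,e}\,x^{-e} + Q_i, $$
where $P_i'$ is $P_i$ with $x_k$ deleted and $Q_i$ is a monomial free of $x_k$. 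Reducing modulo $F_k$ annihilates the first term, so $F_k\mid F_i\rvert_{x_k\leftarrow F_k/x}$ would force $F_k\mid Q_i$; but $Q_i$ is a monic monomial, contradicting the fact that $F_k$ is an irreducible non-unit that is not an associate of a variable. Hence $a_k=0$ in this case, the dichotomy holds, and the lemma follows.
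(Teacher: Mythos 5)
Your proof is correct, and in the one genuinely delicate case it takes a different route from the paper's. The two easy parts coincide: if $F_k=F_i$ then $x_k\notin F_i$, the substitution is vacuous and $a_k=1$; and if $x_k\notin F_i$ and $a_k>0$ then $F_k\mid F_i$ and irreducibility forces equality. The divergence is in ruling out $a_k>0$ when $x_k\in F_i$. The paper argues topologically: $x_i\notin F_j$ together with $x_j\in F_i$ and anti-symmetry of $Q_{\overline{T}}$ produces a path $i\rightarrow j\rightarrow\tilde{i}$, so Lemma \ref{breakage} pins the local configuration down to the M\"obius strip $M_2$ of Figure \ref{badvertex}, where the polynomials are explicitly $F_j=x_a+x_b$ and $F_i=x_j^2+x_ax_b$, and divisibility of $F_i\rvert_{x_j\leftarrow F_j/x}=\frac{F_j^2}{x^2}+x_ax_b$ by $F_j$ is refuted by inspection. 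You instead argue purely algebraically: every exchange polynomial of an anti-symmetric quiver is a sum of two monic monomials with disjoint variable supports, so $x_k$ occurs in only one term of $F_i$, and reducing $F_i\rvert_{x_k\leftarrow F_k/x}=P_i'F_k^{\,e}x^{-e}+Q_i$ modulo the prime $F_k$ leaves the monic monomial $Q_i$, which the irreducible non-monomial $F_k$ cannot divide. In effect you carry out the paper's final divisibility computation generically, without first identifying the surface; this buys generality (your dichotomy $a_k>0\iff F_k=F_i$ holds for any seed $\Sigma_Q$ built from an anti-symmetric quiver, independent of Lemma \ref{breakage} and of the no-punctures hypothesis), whereas the paper's version buys geometric information, namely the identification of $M_2$ as the unique dangerous configuration, a structural fact it reuses in Lemmas \ref{correctpolys} and \ref{cancellation}. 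Two small points you should make explicit, though both are routine: first, $F_k\mid F_i$ with $F_i$ irreducible gives $F_i=uF_k$ only up to a unit $u=\pm1$ of $\mathbb{ZP}$, and positivity of the coefficients then forces $u=1$; second, your reduction modulo $F_k$ uses that $F_k$ is prime in $R[x_1,\ldots,x_{k-1},x^{-1},x_{k+1},\ldots,x_n]$, which holds because $F_k$ involves neither $x_k$ nor $x$, so any factorisation in that ring descends to one in $R[x_1,\ldots,x_n]$, where irreducibility is known from Lemma \ref{validseed}.
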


\begin{proof}

If $\hat{F_i} = F_i$ then, by definition of normalisation, for any $j \neq i$ we have $F_j$ does not divide $F_i \rvert_{x_j \leftarrow \frac{F_j}{x}}$. Hence $F_j$ does not divide $F_i$ and so, in particular, $F_i \neq F_j$.

Conversely, if $\hat{F}_i \neq F_i$ then there exists $j \neq i$ such that $F_j$ divides $F_i \rvert_{x_j \leftarrow \frac{F_j}{x}}$, which forces $x_i \notin F_j$. Suppose for a contradiction that $x_j \in F_i$. This implies the existence of a path $i \rightarrow j \rightarrow \tilde{i}$. By Proposition \ref{breakage} and Figure \ref{badvertex} we see $F_j = x_a + x_b$ and $F_i = x_j^2 + x_{a}x_{b}$. However, this contradicts $F_j$ dividing $F_i \rvert_{x_j \leftarrow \frac{F_j}{x}} = \frac{F_j^2}{x^2} + x_{a}x_{b}$. Hence $x_j \notin F_i$ and $F_j$ divides $F_i \rvert_{x_j \leftarrow \frac{F_j}{x}} = F_i$. Moreover, since $F_i$ is irreducible then $F_i = F_j$.

\end{proof}

We now list several lemmas to help discover the heterogeneity of the exchange polynomials in $\Sigma_{Q_{\overline{T}}}$.

\begin{lem}
\label{noarrows}
If $F_i = F_j$ then there are no arrows between $i$ and $j$ in $Q$.

\end{lem}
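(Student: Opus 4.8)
The plan is to argue by contradiction, exploiting the single most rigid feature of these exchange polynomials: each $F_j$ is built only from the \emph{net} weights $c_{kj} := b_{kj} + b_{\tilde{k}j}$, the exponent of $x_k$ in $F_j$ being $|c_{kj}|$. In particular $F_j$ never involves its own variable, since $c_{jj} = b_{jj} + b_{\tilde{j}j} = 0$ (there are no self-loops, and an arrow $\tilde{j} \to j$ would be a forbidden arrow $\tilde{j} \to \tilde{\tilde{j}}$ by anti-symmetry). So the starting observation I would record is $x_i \notin F_i$ for every $i$.

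Assume $F_i = F_j$ and, for contradiction, that there is an arrow between $i$ and $j$; without loss of generality $i \to j$, i.e. $b_{ij} > 0$ (the case $j \to i$ is identical after interchanging the roles of $i$ and $j$). First I would read off the exponent of $x_i$ in $F_j$, namely $|c_{ij}| = |b_{ij} + b_{\tilde{i}j}|$. Since $x_i \notin F_i$ and $F_i = F_j$, we must have $x_i \notin F_j$, which forces the cancellation $c_{ij} = b_{ij} + b_{\tilde{i}j} = 0$.

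This cancellation is where the real content lies, and it is the step I expect to be the crux. Because $b_{ij} > 0$, the vanishing of $c_{ij}$ forces $b_{\tilde{i}j} < 0$, i.e. an arrow $j \to \tilde{i}$; together with $i \to j$ this produces a path $i \to j \to \tilde{i}$. At this point I would invoke Lemma \ref{breakage}: such a path occurs precisely when $T$ contains the M\"obius strip $M_2$ with $j$ as its non-t-mutable arc. In that local configuration the exchange polynomials are completely determined — exactly as computed in the proof of Lemma \ref{nonormalisation} — namely $F_j = x_a + x_b$ and $F_i = x_j^2 + x_a x_b$ for the two boundary arcs $a, b$ of $M_2$. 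These two polynomials are visibly distinct (they have different total degrees, and $F_i$ involves $x_j$ while $F_j$ does not), contradicting $F_i = F_j$.

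The main obstacle, then, is not the bookkeeping but recognising that an arrow $i \to j$ need not by itself place $x_i$ inside $F_j$: the anti-symmetric twin arrow can cancel its contribution, so no purely algebraic read-off of exponents suffices. Handling exactly this cancellation is what forces the appeal to the geometric classification in Lemma \ref{breakage} rather than a self-contained algebraic argument. Once the $M_2$ configuration is pinned down and its two polynomials are quoted, the contradiction is immediate, so no further case analysis on arrow multiplicities is required.
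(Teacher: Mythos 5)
Your proof is correct, but it takes a genuinely different---and much heavier---route than the paper's. The paper disposes of this lemma in three lines of pure index algebra, and the key input you discarded is the \emph{second} cancellation condition: since $x_j \notin F_j = F_i$, you also have $x_j \notin F_i$, giving $b_{ji} + b_{\tilde{j}i} = 0$ alongside your $b_{ij} + b_{\tilde{i}j} = 0$. Skew-symmetry gives $b_{ji} = -b_{ij}$, and anti-symmetry gives $b_{\tilde{j}i} = b_{\tilde{i}j}$ (apply $u \rightarrow v \iff \tilde{v} \rightarrow \tilde{u}$ with $u = \tilde{j}$, $v = i$ and $\tilde{\tilde{j}} = j$), so the two conditions read $b_{ij} + b_{\tilde{i}j} = 0$ and $-b_{ij} + b_{\tilde{i}j} = 0$, forcing $b_{ij} = b_{\tilde{i}j} = 0$ outright. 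Hence your closing assertion that ``no purely algebraic read-off of exponents suffices'' is mistaken: the twin-arrow cancellation you identified as the crux is killed algebraically by the symmetric condition, with no geometry at all. Your alternative---using only the one cancellation to extract the path $i \rightarrow j \rightarrow \tilde{i}$, then invoking Lemma \ref{breakage} and the explicit $M_2$ polynomials $F_j = x_a + x_b$, $F_i = x_j^2 + x_a x_b$ (whose computation indeed precedes this lemma, in Lemma \ref{nonormalisation}, so there is no circularity)---is sound, and the final contradiction is airtight since $x_j \in F_i$ while $F_j$ cannot depend on $x_j$. What it buys is a concrete picture of the unique configuration that one-sided cancellation produces; what it costs is generality: Lemma \ref{breakage} is available only for adjacency quivers $Q_{\overline{T}}$ of triangulations, whereas the paper's argument is valid for an arbitrary anti-symmetric quiver. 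That extra generality is not idle, since Lemma \ref{reverse} applies the present lemma to restrictions and double mutations of quivers in the abstract anti-symmetric setting, where your appeal to the surface classification would not be licensed.
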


\begin{proof}

Since $x_i \notin F_i$ then $x_i \notin F_j$. As such, $b_{ij} + b_{\tilde{i}j} = 0$. Likewise, $b_{ji} + b_{\tilde{j}i} = 0$. Finally, since $b_{ij}= -b_{ji}$ and $b_{\tilde{i}j}=b_{\tilde{j}i}$, then, as required, $b_{ij} = 0$.

\end{proof}

\begin{defn}

Let $Q$ be a quiver and $V$ a set of vertices of $Q$. We say $R$ is the \textit{\textbf{$V$-restriction}} of $Q$ if $R$ consists of all arrows of $Q$ with a head or tail in $V$.

\end{defn}

\begin{lem}
\label{reverse}
Suppose $R$ is the $\{i,j\}$-restriction of $Q$ with $F_i^Q = F_j^Q$. Then the $\{i,j\}$-restriction of $\mu_{\tilde{i}} \circ \mu_{i}(R)$ is the $\{i,j\}$-restriction of $\mu_{\tilde{i}} \circ \mu_{i}(Q)$ where $F_i^{\mu_{\tilde{i}} \circ \mu_{i}(Q)} = F_j^{\mu_{\tilde{i}} \circ \mu_{i}(Q)}$. In particular, if $R$ is the $\{i,j\}$-restriction of a quiver arising from $(S,M)$ with exchange polynomials $F_i = F_j$, then so is the \{i,j\}-restriction of $\mu_{\tilde{i}} \circ \mu_{i}(R)$.
\end{lem}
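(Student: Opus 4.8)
The plan is to establish the statement in two stages, mirroring the two sentences in the lemma. First I would prove the general quiver-theoretic claim: that when $R$ is the $\{i,j\}$-restriction of $Q$ and $F_i^Q = F_j^Q$, the $\{i,j\}$-restriction of $\mu_{\tilde{i}}\circ\mu_i(R)$ agrees with the $\{i,j\}$-restriction of $\mu_{\tilde{i}}\circ\mu_i(Q)$, with the equality of exchange polynomials preserved. The key observation driving this is Lemma \ref{noarrows}: since $F_i^Q = F_j^Q$, there are no arrows between $i$ and $j$ (nor, by anti-symmetry, between any of $i,\tilde i,j,\tilde j$ in the relevant pattern). This means that under the mutations $\mu_i$ and $\mu_{\tilde i}$, the only arrows incident to $i$ or $j$ whose multiplicities can change are those passing through the mutated vertices $i,\tilde i$; and because $i,j$ are disconnected, the paths $k\to i\to j$ creating new arrows at $j$ simply do not exist. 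Consequently the change in any arrow $b_{kj}$ under the double mutation depends only on the arrows $b_{ki}, b_{\tilde k i}, b_{ij}, \ldots$, all of which lie inside the restriction $R$ by definition. I would verify this by writing out the quiver mutation rule $b_{kj}' = b_{kj} + [-b_{ki}]_+ b_{ij} + b_{ki}[b_{ij}]_+$ (and its twin) and checking term-by-term that every contributing entry is an arrow of $R$.

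Next I would show the exchange-polynomial equality $F_i^{\mu_{\tilde i}\circ\mu_i(Q)} = F_j^{\mu_{\tilde i}\circ\mu_i(Q)}$ persists. Since $F_i^Q = F_j^Q$ encodes the equality of the column vectors $(b_{ki}+b_{\tilde k i})_k$ and $(b_{kj}+b_{\tilde k j})_k$ (up to the overall sign convention governing which monomial each entry lands in), it suffices to show these combined-weight vectors remain equal after double mutation. Because $i$ and $j$ are disconnected, the formula from the proof of Proposition \ref{quivermut} gives $b_{ki}'+b_{\tilde k i}' = -(b_{ki}+b_{\tilde k i})$ and $b_{kj}'+b_{\tilde k j}' = b_{kj}+b_{\tilde k j} + (b_{ki}+b_{\tilde k i})(b_{ij}+b_{\tilde i j})$, with the symmetric statement for the $i$-column. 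Feeding the equality of the two original weight vectors into these identical transformation rules yields equality of the two new weight vectors, hence equality of the mutated exchange polynomials.

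The second sentence is then a direct specialization: if $R$ is the $\{i,j\}$-restriction of a quiver $Q_{\overline T}$ arising from a triangulation of $(S,M)$ with $F_i = F_j$, then since double mutation of $Q_{\overline T}$ corresponds (by Proposition \ref{quiverandflip}) to flipping the arc $i$ in $T$, the mutated quiver $\mu_{\tilde i}\circ\mu_i(Q_{\overline T})$ still arises from a triangulation, namely $\overline{\mu_i(T)}$. The first part of the lemma then identifies its $\{i,j\}$-restriction with $\mu_{\tilde i}\circ\mu_i(R)$ and guarantees the exchange polynomials remain equal, which is exactly the claim.

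I expect the main obstacle to be bookkeeping the sign conventions and the combined-weight cancellations carefully, particularly confirming that the ``disconnected'' hypothesis from Lemma \ref{noarrows} genuinely kills every cross-term $[-b_{ki}]_+ b_{ij}$ that could otherwise introduce a spurious arrow between $i$ and $j$ or spoil the polynomial equality. Once the absence of arrows between $i$ and $j$ is fully exploited via anti-symmetry, the computation should reduce to the same algebraic identities already carried out in the proof of Proposition \ref{quivermut}, so the work is in organizing these rather than discovering anything new.
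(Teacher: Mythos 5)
Your proposal is correct and takes essentially the same route as the paper: Lemma \ref{noarrows} kills every cross-term in the mutation formulas, so $\mu_{\tilde{i}} \circ \mu_{i}$ restricted to $\{i,j\}$ amounts to reversing the arrows at $i$ and $\tilde{i}$ while leaving the arrows at $j$ untouched --- the paper records this in one line, and you verify the same fact entry-by-entry from the mutation rule. One cosmetic point: after the double mutation the $i$-column is negated while the $j$-column is fixed, so the two combined-weight vectors become negatives of each other rather than equal; but, exactly as your parenthetical sign caveat anticipates, negating a column merely swaps the two monomials of the exchange polynomial, so the conclusion $F_i = F_j$ persists and your argument stands.
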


\begin{proof}

By Lemma \ref{noarrows} there are no arrows between $i$ and $j$ so performing mutation at $i$ and $\tilde{i}$ in $R$ and taking the $\{i,j\}$-restriction is the same as reversing all arrows at $i$ and $\tilde{i}$ in $R$. Hence the $\{i,j\}$-restriction of $\mu_{\tilde{i}} \circ \mu_{i}(R)$ is the $\{i,j\}$-restriction of $\mu_{\tilde{i}} \circ \mu_{i}(Q)$. Moreover, the new $i^{th}$ and $j^{th}$ exchange polynomials remain unchanged, so are still equal.

\end{proof}

\begin{lem}
\label{cancellation}
Suppose $F_i = F_j$ for some $i \neq j$; $x_k \notin F_i$ for some $k$; and $i$ and $k$ are adjacent arcs in $T$. Then $(S,M)$ is either the M\"obius strip $M_4$ or the Klein bottle with one boundary component and two marked points, where neither surface has been allocated boundary variables.

\end{lem}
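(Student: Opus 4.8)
The plan is to exploit the geometric constraints forced by the three hypotheses simultaneously. The key observation is that $F_i = F_j$ with $x_k \notin F_i$ means the variable $x_k$ appears in neither exchange polynomial, so by Lemma \ref{noarrows}-type reasoning we have $b_{ki}+b_{\tilde{k}i} = 0$ and $b_{kj}+b_{\tilde{k}j}=0$, yet $i$ and $k$ are adjacent arcs in $T$. First I would translate these arithmetic conditions on the quiver $Q_{\overline{T}}$ back into local configurations on the surface. Adjacency of $i$ and $k$ in $T$ means they share a triangle, which lifts to arrows between the vertices $k,\tilde{k}$ and $i,\tilde{i}$ in $Q_{\overline{T}}$; the condition $b_{ki}+b_{\tilde{k}i}=0$ then forces these lifted arrows to cancel, pinning down how the triangles containing $i$ and $k$ must be glued in the double cover. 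The equality $F_i = F_j$ adds the further constraint (via Lemma \ref{noarrows}) that $i$ and $j$ have no arrows between them while sharing the same exchange monomials, which severely restricts the neighbourhood of $i$.

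The main work is a reconstruction-by-blocks argument in the spirit of Lemma \ref{breakage}. I would start from the triangle containing $i$ and $k$, use the vanishing of $b_{ki}+b_{\tilde{k}i}$ to determine the adjacent triangle across the relevant edge, and propagate outward using anti-symmetry of $Q_{\overline{T}}$ to glue quadrilaterals together. Because $F_i=F_j$ forces $i$ and $j$ to play symmetric roles with identical local exchange data, the gluing pattern must close up on itself rather than extend indefinitely; tracking the identifications of boundary segments and marked points should show that only finitely many closed-up configurations are consistent with all constraints. The hypothesis that $i,k$ are adjacent with $x_k \notin F_i$ is what prevents the generic (orientable-disc) behaviour and pushes the local picture into a non-orientable regime with a cross-cap.

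I expect the enumeration to collapse to exactly two surfaces: the M\"obius strip $M_4$ and the once-bounded, twice-marked Klein bottle. The cleanest way to finish is to realise each candidate gluing as a concrete triangulated surface, verify directly that its lifted adjacency quiver produces $F_i=F_j$ with $x_k\notin F_i$ and $i,k$ adjacent, and check that no boundary variables can be present (the boundary segments must be the ones identified away or assigned the constant $1$, else the polynomials would pick up an extra boundary factor and fail to coincide).

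The hard part will be controlling the propagation so that the case analysis genuinely terminates: a priori the gluing could continue through many triangles, and I must argue that the symmetry imposed by $F_i=F_j$ together with the single cancellation $b_{ki}+b_{\tilde{k}i}=0$ forces closure after a bounded number of blocks. Ruling out larger surfaces that happen to reproduce the same local quiver fragment is the delicate step; here I would lean on the rank invariant of $(S,M)$ and on the fact that any additional arc adjacent to $i$ or $j$ would introduce arrows contradicting either Lemma \ref{noarrows} or the absence of the forbidden path $a \rightarrow i \rightarrow \tilde{a}$ guaranteed by t-mutability.
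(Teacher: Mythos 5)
Your overall direction --- reconstructing the surface from the lifted quiver by blocks, in the spirit of Lemma \ref{breakage} --- is indeed the paper's method, but your plan contains a concrete error that would derail the enumeration. The cancellation $b_{ki}+b_{\tilde{k}i}=0$ with $i$ and $k$ adjacent can arise in exactly two ways before $2$-cycles are cancelled: a $2$-cycle between $i$ and $k$, or the path $k \rightarrow i \rightarrow \tilde{k}$. The paper disposes of the first by checking that every gluing realizing it closes up into a \emph{punctured} surface, which is forbidden; the second is the live case, and by Lemma \ref{breakage} it forces $T$ to contain locally the M\"obius strip $M_2$, with $i$ its \emph{non}-t-mutable arc. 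Your final paragraph proposes to control the propagation by leaning on ``the absence of the forbidden path $a \rightarrow i \rightarrow \tilde{a}$ guaranteed by t-mutability,'' but the lemma has no t-mutability hypothesis, and the main case is precisely the \emph{presence} of such a path through $i$: both conclusion surfaces ($M_4$ and the Klein bottle with one boundary component and two marked points) contain it. Ruling that path out would eliminate exactly the configurations the lemma concludes with, so this step of your plan is not merely vague but wrong.

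Relatedly, the mechanism that actually terminates the case analysis is not the rank invariant you invoke but the standing exclusion of punctures. After the $M_2$ configuration is pinned down with neighbouring edges $a,b$, the paper excludes $b \in \{a,\tilde{a}\}$ and (when $b$ carries a variable) an arrow $j \rightarrow b$, because each such identification produces a punctured surface; Lemma \ref{reverse} is used to normalize to an arrow $a \rightarrow j$, and the finitely many surviving gluings are exactly $M_4$ (when $j$ attaches only to $a$) and the Klein bottle (via a path $\tilde{m} \rightarrow j \rightarrow m$ or an arrow $j \rightarrow \tilde{b}$), in each case forcing the boundary segments to carry no variables. You correctly anticipate the two target surfaces and the no-boundary-variables feature, but without the puncture-exclusion argument, and with the t-mutability misstep in place, your ``the gluing pattern must close up'' step has no proof.
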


\begin{proof}

Under the conditions of the lemma, before cancelling 2-cycles, we must have one of the following subquivers in our adjacency quiver $Q$:

\begin{figure}[H]
\begin{center}
\includegraphics[width=10cm]{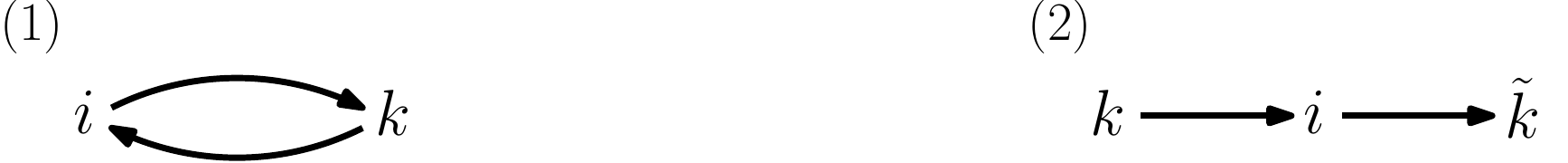}
\end{center}
\end{figure}

If the subquiver $(1)$ is in $Q$ then we must have one of the configurations shown in Figure \ref{puncturedconfiguration}. In either situation, after gluing, we obtain a punctured surface. Since we have forbidden punctures then this subquiver cannot arise from any of our triangulations. 

\begin{figure}[H]
\begin{center}
\includegraphics[width=12cm]{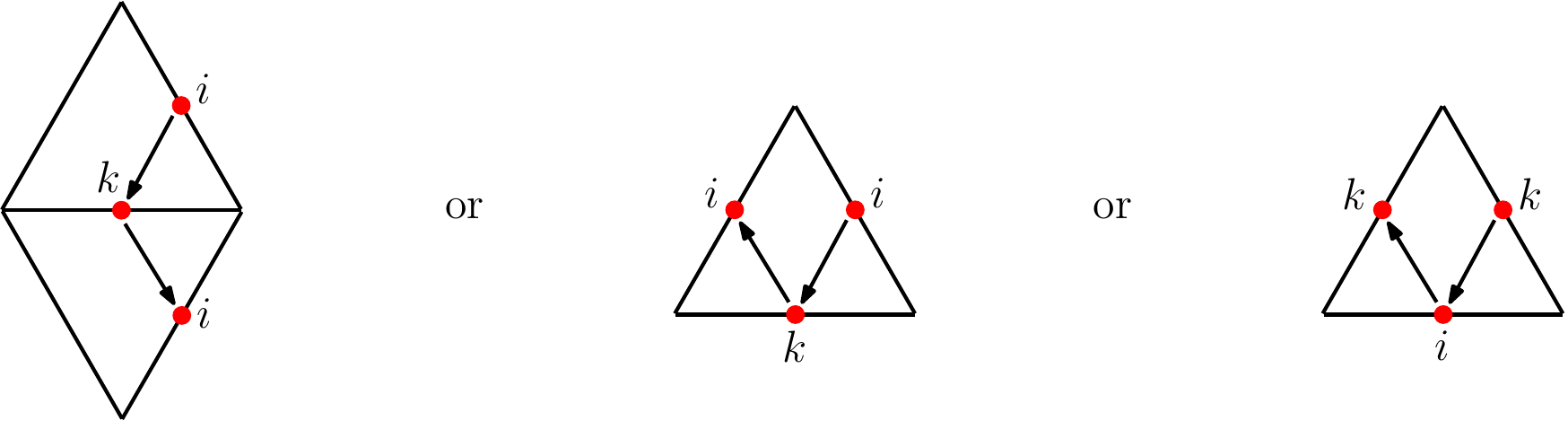}
\caption{The three configurations which produce the subquiver (1).}
\label{puncturedconfiguration}
\end{center}
\end{figure}

If the subquiver $(2)$ is in $Q$ then by Lemma \ref{breakage} we must have the following local picture shown on the left of Figure \ref{cylinderbreakage}. Note that $b$ cannot equal $a$ or $\tilde{a}$ because this would give rise to a punctured surface - the twice punctured projective space $\mathbb{R}P^2$ or the once punctured Klein bottle, respectively.

Moreover, $a$ and $b$ cannot both be boundary segments as then there is no label $j$ in the triangulation. Without loss of generality, suppose $a$ is not a boundary component. As a consequence, there is an arrow $a \rightarrow i$. Since $F_i = F_j$, using Lemma \ref{reverse}, we may assume the existence of an arrow $a \rightarrow j$. Hence we arrive at the picture shown on the right of Figure \ref{cylinderbreakage}.

\begin{figure}[H]
\begin{center}
\includegraphics[width=11cm]{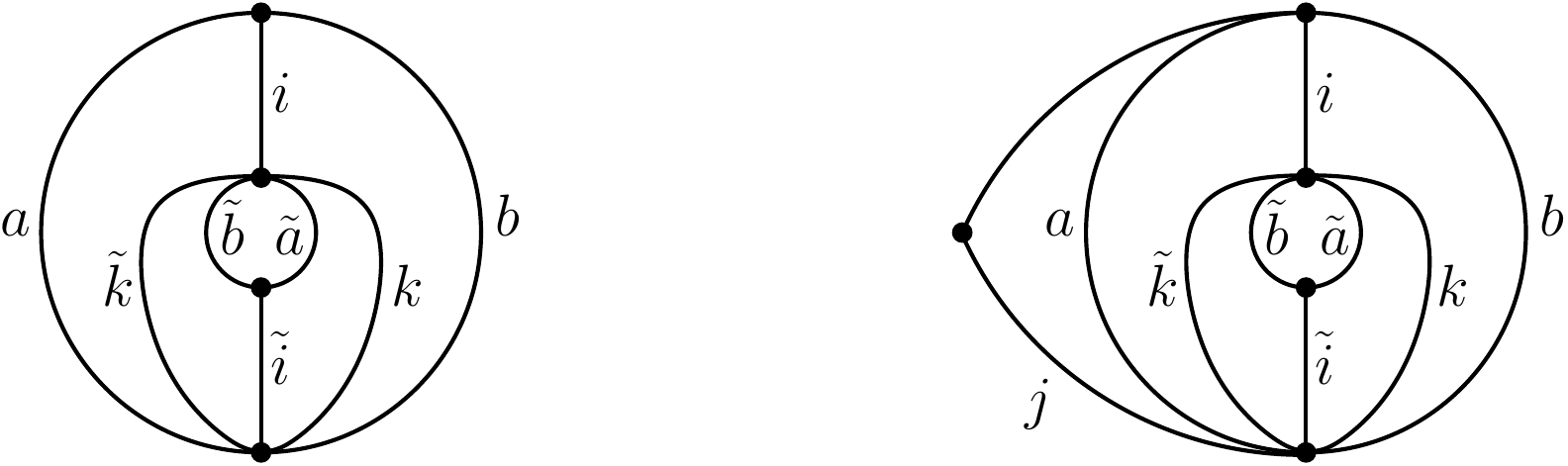}
\caption{On the right we illustrate the effect on the local configuration of the surface when there is an arrow $a \rightarrow j$ in $Q$.}
\label{cylinderbreakage}
\end{center}
\end{figure}

If $b$ is doesn't receive a variable then there is no arrow $i \rightarrow b$, and we are in one of two possible scenarios: There is a path $\tilde{m} \rightarrow j \rightarrow m$ for some $m$, or $j$ is connected to only $a$. If there is a path $\tilde{m} \rightarrow j \rightarrow m$ then by Lemma \ref{breakage} our surface must have the configuration shown on the left of Figure \ref{DoubleCancellation}. Taking the $\mathbb{Z}_2$-quotient of this yields the Klein bottle with one boundary component and $2$ marked points. Alternatively, if $j$ is connected to only $a$ then the arc $j$ is the diagonal of a square with three unlabelled boundary segments and fourth side $a$. And we obtain the surface shown on the right of Figure \ref{DoubleCancellation}. Taking the $\mathbb{Z}_2$-quotient of this yields the M\"obius strip with 4 marked points.

\begin{center}

\begin{figure}[H]
\begin{center}
\includegraphics[width=13cm]{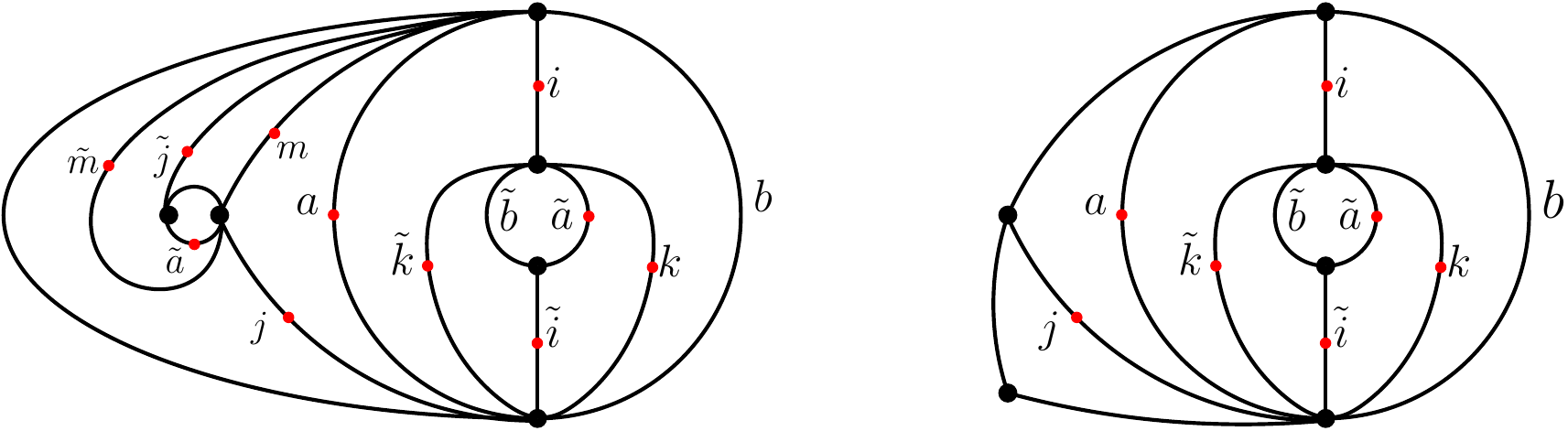}
\caption{We depict the resulting surfaces when there is either: a path $\tilde{m} \rightarrow j \rightarrow m$; or $j$ is connected only to the vertex $a$.}
\label{DoubleCancellation}
\end{center}
\end{figure}

\end{center}

If $b$ does receive a variable then there is an arrow $i \rightarrow b$ in $Q$. As such, since $F_i = F_j$, there is either an arrow $j \rightarrow b$ or an arrow $j \rightarrow \tilde{b}$. However, an arrow $j \rightarrow b$ gives rise to a punctured surface, which is forbidden. An arrow $j \rightarrow \tilde{b}$ gives rise to the configurations shown in Figure \ref{aandbwithlabels}. In both cases, taking the $\mathbb{Z}_2$-quotient again yields the Klein bottle with one boundary component and two marked points.

\begin{figure}[H]
\begin{center}
\includegraphics[width=13cm]{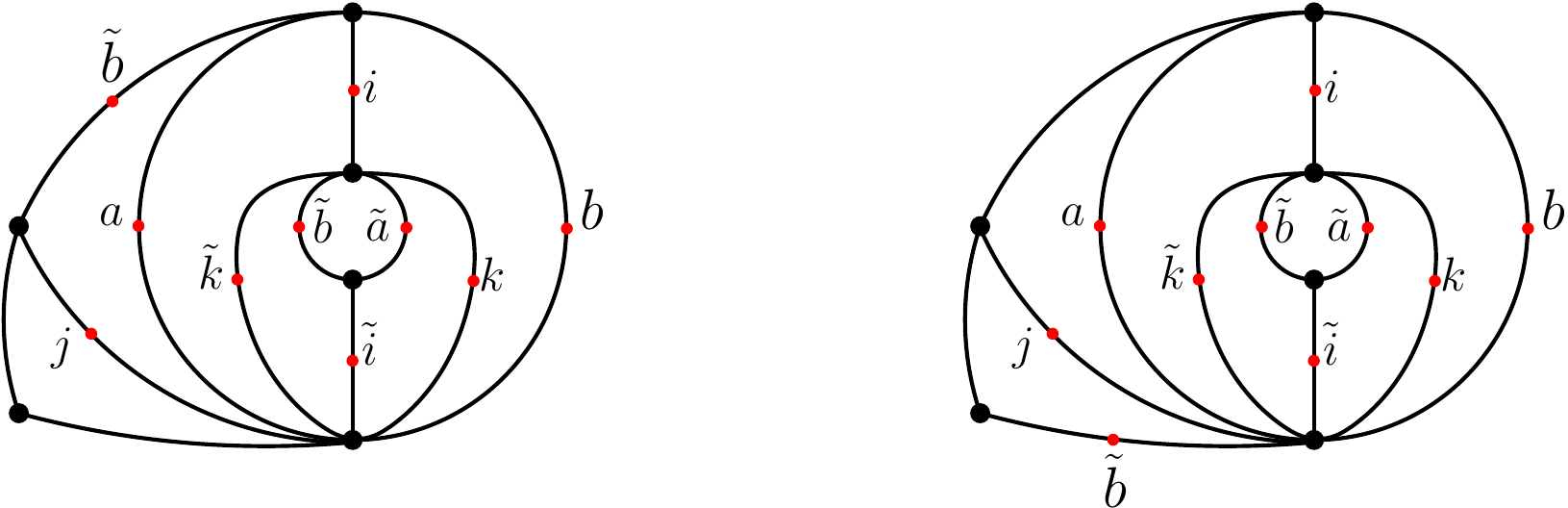}
\caption{The two possibilities of the surface when there is an arrow $j \rightarrow \tilde{b}$.}
\label{aandbwithlabels}
\end{center}
\end{figure}

\end{proof}

\begin{lem}
\label{weight}
If $F_i = F_j$ then the quiver $Q$ cannot contain either of the subquivers $ \tilde{k} \leftarrow i \rightarrow k $ or $i \stackrel{2}{\longrightarrow} k$, for any vertex $k$ of $Q$.

\end{lem}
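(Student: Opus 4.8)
The plan is to argue by contradiction. I would first observe that both subquivers are exactly the two ways in which $x_k$ can acquire exponent $2$ in $F_i$ through arrows leaving $i$, given that every vertex of $Q_{\overline{T}}$ has at most two outgoing arrows. Concretely, writing anti-symmetry as $b_{pq} = b_{\tilde q \tilde p}$: in the case $\tilde k \leftarrow i \rightarrow k$ the two outgoing arrows at $i$ must each have weight one, so $b_{ik} = b_{i\tilde k} = 1$, and anti-symmetry gives $b_{\tilde k i} = b_{\tilde i k} = -1$, whence $b_{ki} + b_{\tilde k i} = -2$; in the case $i \stackrel{2}{\longrightarrow} k$ the double arrow saturates the outgoing slots at $i$, so there is no arrow $i \to \tilde k$, giving $b_{\tilde k i} = 0$ and again $b_{ki} + b_{\tilde k i} = -2$. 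In either situation $x_k$ occurs with exponent $2$ in $F_i$, and since $F_i = F_j$ the same holds in $F_j$; in particular $j$ must be adjacent to $k$ or to $\tilde k$.

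For the first subquiver I would invoke Lemma \ref{breakage}. From $i \to k$ anti-symmetry yields $\tilde k \to \tilde i$, which together with the assumed arrow $i \to \tilde k$ produces the path $i \to \tilde k \to \tilde i$. Hence, by Lemma \ref{breakage}, $T$ contains the M\"obius strip $M_2$ with $\tilde k$ (equivalently the downstairs arc $K$) as its non-$t$-mutable arc, and, exactly as in the computation inside the proof of Lemma \ref{nonormalisation}, $F_i = x_k^2 + x_a x_b$. The decisive point is that $K$ borders only the two triangles of $M_2$, so its unique arc-neighbour is the arc underlying $i$; thus $i$ is the only arc whose exchange polynomial contains $x_k$ with exponent $2$. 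Therefore $F_j = x_k^2 + \cdots$ forces $j = i$, contradicting $i \neq j$.

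For the second subquiver I would reconstruct the two triangles of $\overline{T}$ meeting $k$. Since $i \stackrel{2}{\longrightarrow} k$, the arc $i$ precedes $k$ in both of them, so they are $(i, k, c_1)$ and $(i, k, c_2)$ for some sides $c_1, c_2$; reading the arrows off these triangles gives $F_i = x_{c_1} x_{c_2} + x_k^2$ (with the convention that a boundary side contributes $1$), together with $c_1, c_2 \to i$ and $k \to c_1, c_2$. Applying anti-symmetry, the two triangles meeting $\tilde k$ are $(\tilde c_1, \tilde k, \tilde i)$ and $(\tilde c_2, \tilde k, \tilde i)$. Consequently the only arcs adjacent to $k$ or $\tilde k$ are those underlying $i$, $c_1$, $c_2$. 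Since $x_k^2 \in F_j$ forces $j$ to be one of these, and $j \neq i$, the arc $j$ must underlie $c_1$ or $c_2$; but then $x_j$ already appears in $F_i = x_{c_1} x_{c_2} + x_k^2$, contradicting the fact that an exchange polynomial never contains its own variable, i.e. $x_j \notin F_j = F_i$.

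The step I expect to be the genuine obstacle is the second one: one must carry out the local block reconstruction of $\overline{T}$ around the twin pair $(k, \tilde k)$ from the saturated arrow data --- much as in the proof of Lemma \ref{breakage} --- and verify that every sign pattern of $(b_{ik}, b_{i\tilde k}, b_{jk}, b_{j\tilde k})$ compatible with anti-symmetry and the bound of two incoming and two outgoing arrows at each vertex collapses to the contradiction above (either $j = i$, the variable $x_j$ entering $F_i$, or a vertex of degree exceeding two). The first subquiver, by contrast, is essentially immediate once Lemma \ref{breakage} is applied.
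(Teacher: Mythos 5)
Your overall route matches the paper's in spirit: both dispose of $\tilde{k} \leftarrow i \rightarrow k$ by passing through Lemma \ref{breakage} to the M\"obius strip $M_2$, and both kill $i \stackrel{2}{\longrightarrow} k$ using the bound of two incoming/outgoing arrows per vertex. However, your first case contains a false assertion. Having placed $T \supset M_2$ with $K$ non-$t$-mutable, you claim $K$ ``borders only the two triangles of $M_2$, so its unique arc-neighbour is the arc underlying $i$.'' This is not true in general: the two sides $a,b$ closing off $M_2$ (the outer sides of the quadrilaterals in Figure \ref{badvertex}) need not be boundary segments of $S$ --- they may be arcs of $T$, and they are adjacent to $K$ inside both triangles of $M_2$. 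Indeed the paper's own Lemma \ref{cancellation} analyses exactly this configuration under the hypothesis that $a$ is an arc. So adjacency of $j$ to $k$ or $\tilde{k}$ only forces $j \in \{i,a,b\}$ up to tildes, not $j = i$. Your conclusion survives, but you must argue it: either observe that $a$ meets $k$ but not $\tilde{k}$ in $Q_{\overline{T}}$ (and symmetrically for $b$), so $x_k$ enters $F_a$ and $F_b$ with exponent $\lvert b_{ka}+b_{\tilde{k}a}\rvert = 1$ rather than $2$; or, in the spirit of your own second case, note that $F_i = x_k^2 + x_a x_b$ contains $x_a$ and $x_b$, so $j \in \{a,b\}$ would place $x_j$ inside $F_j = F_i$, which is impossible. (The paper finishes differently again: every triangle with side $k$ or $\tilde{k}$ has a side $i$ or $\tilde{i}$, so adjacency of $j$ to $k$ forces an arrow between $j$ and $i$ or $\tilde{i}$, contradicting Lemma \ref{noarrows}.)

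Two further remarks. In your preliminary computation for $i \stackrel{2}{\longrightarrow} k$, saturation of the outgoing slots at $i$ only yields $b_{i\tilde{k}} \leq 0$; to conclude $b_{\tilde{k}i} = 0$ you must also exclude an arrow $\tilde{k} \rightarrow i$, which does follow, since anti-symmetry would then produce $\tilde{i} \rightarrow k$ and give $k$ three incoming arrows --- but the step should be said. With these patches your second case is not only correct but more complete than the paper's: the paper asserts ``w.l.o.g.\ $i \stackrel{2}{\longrightarrow} k \stackrel{2}{\longleftarrow} j$'' and contradicts the incoming bound at $k$, silently leaving the reader to eliminate the alternative placements of the $x_k^2$-contribution in $F_j$ (namely $k \stackrel{2}{\longrightarrow} j$, or $j$ joined to both $k$ and $\tilde{k}$, the latter reducing to the first subquiver). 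Your reconstruction, showing the two triangles at $k$ are $(i,k,c_1)$ and $(i,k,c_2)$ so that the neighbourhood of the pair $(k,\tilde{k})$ is exhausted by $i, c_1, c_2$ up to tildes, handles all of these configurations uniformly and then concludes via $x_j \notin F_j$ --- a clean finishing move the paper does not use.
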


\begin{proof}

If $ \tilde{k} \leftarrow i \rightarrow k $ is a subquiver of $Q$ then antisymmetry implies the existence of the path $i \rightarrow k \rightarrow \tilde{i}$. Therefore, by Lemma \ref{breakage}, we have the sub triangulation shown in Figure \ref{doublearrows}. Since $F_i = F_j$ then there must be an arrow $j \rightarrow k$ or $j \rightarrow \tilde{k}$. However, any triangle with side $k$ or $\tilde{k}$ also has a side $i$ or $\tilde{i}$. This forces an arrow between $i$ and $j$ or $i$ and $\tilde{j}$, contradicting Lemma \ref{noarrows}.
If $i \stackrel{2}{\longrightarrow} k$ is a subquiver of $Q$ then since $F_i = F_j$, without loss of generality, $i \stackrel{2}{\longrightarrow} k \stackrel{2}{\longleftarrow} j$ is a subquiver of $Q$. However, this contradicts the fact that any vertex in $Q$ can have at most 2 incoming arrows.

\end{proof}

\begin{figure}[H]
\begin{center}
\includegraphics[width=3.5cm]{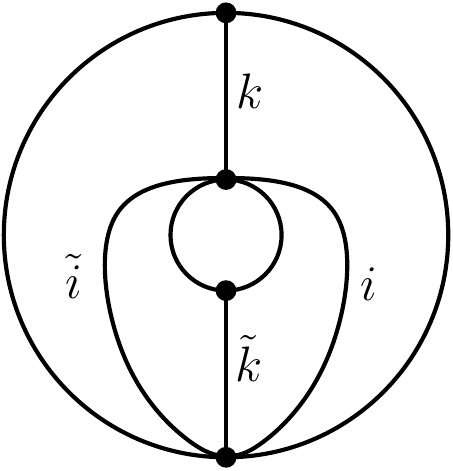}
\caption{The local configuration of the surface when there is a path $\tilde{k} \leftarrow i \rightarrow k$.}
\label{doublearrows}
\end{center}
\end{figure}

\begin{lem}
\label{equals}
Let $T$ be a triangulation and $\Sigma_{Q_{\overline{T}}}$ its associated LP seed. Then $\hat{F}_i = F_i$ for any $i \in \{1,\ldots, n\}$.

\end{lem}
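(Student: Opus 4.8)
The plan is to invoke Lemma~\ref{nonormalisation}, which reduces the desired identity $\hat F_i = F_i$ (for every $i$) to the purely combinatorial assertion that the exchange polynomials $F_1,\dots,F_n$ attached to $T$ are pairwise distinct. I would therefore assume, for a contradiction, that $F_i = F_j$ for some $i \neq j$, and show this is incompatible with $T$ being a triangulation of an admissible (in particular, unpunctured) surface. The whole argument is a case analysis organised around the local structure of $Q_{\overline T}$ at the vertex $i$, using that $Q_{\overline T}$ has at most two incoming and two outgoing arrows at each vertex (as in Lemma~\ref{validseed}), together with the constraints already extracted in Lemmas~\ref{noarrows}, \ref{weight} and \ref{cancellation}, and the classification of the possible shapes of $F_i$ recorded in the proof of Lemma~\ref{validseed}.

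First I would record the constraints forced by $F_i = F_j$. By Lemma~\ref{noarrows} there are no arrows between $i$ and $j$, so in particular $x_i \notin F_j$, $x_j \notin F_i$, and every variable occurring in $F_i$ occurs in $F_j$ with the same exponent. By Lemma~\ref{weight}, $Q_{\overline T}$ contains neither a configuration $\tilde k \leftarrow i \rightarrow k$ nor a double arrow $i \stackrel{2}{\longrightarrow} k$; combined with the at-most-two-in/out bound this severely restricts which binomial $F_i$ can be. I would then split on whether some arc $k$ adjacent to $i$ in $T$ fails to appear in $F_i$ (that is, $x_k \notin F_i$), which is precisely the hypothesis isolated in Lemma~\ref{cancellation}.

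In the first branch, where such an adjacent $k$ with $x_k \notin F_i$ exists, Lemma~\ref{cancellation} forces $(S,M)$ to be either the M\"obius strip $M_4$ or the Klein bottle with one boundary component and two marked points, in each case with no boundary variables. For these finitely many surfaces I would list all of their triangulations via their lifted triangulations and anti-symmetric quivers, compute the associated exchange polynomials (being careful that an unlabelled boundary segment contributes the constant $1$ rather than a variable, which is exactly what changes the polynomials), and check directly that no two of them coincide, contradicting $F_i = F_j$. In the complementary branch, where every arc adjacent to $i$ appears in $F_i$, the two triangles on either side of $i$ determine the entire support of $F_i = F_j$; tracing the triangles surrounding $j$ that carry these same sides, and using Lemma~\ref{weight} to exclude the degenerate local pictures, one is led to a configuration in which $i$ and $j$ lie in a common triangle, producing an arrow between them and contradicting Lemma~\ref{noarrows} (or else again collapsing onto one of the exceptional surfaces above). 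Either branch yields a contradiction, so all $F_i$ are distinct and $\hat F_i = F_i$ for every $i$.

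I expect the main obstacle to be the direct verification on the exceptional surfaces $M_4$ and the Klein bottle: one must genuinely check, by an exhaustive flip analysis in the no-boundary-variable setting, that equality of two exchange polynomials never actually occurs, since it is this step that closes the proof by contradiction. The combinatorial branch is comparatively routine once the support of $F_i$ has been pinned down, but the bookkeeping of exactly which triangles are glued along the shared sides $a,b,c,d$—so as to genuinely produce the forbidden arrow between $i$ and $j$—also requires care.
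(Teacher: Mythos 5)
Your overall skeleton---reducing to pairwise distinctness of the exchange polynomials via Lemma \ref{nonormalisation}, then constraining the local quiver at $i$ and $j$ with Lemmas \ref{noarrows}, \ref{weight} and \ref{cancellation}---matches the paper's route. But there is a genuine logical flaw in how you close the argument: you frame the whole proof as a contradiction, expecting that $F_i = F_j$ can never occur, and in your first branch you plan to verify ``directly that no two of them coincide'' on the M\"obius strip $M_4$ and the Klein bottle $K_{1,2}$ in the no-boundary-variable setting. That verification would fail: on those surfaces, without boundary variables, coincident exchange polynomials \emph{genuinely occur}---this is exactly why Lemma \ref{cancellation} singles them out. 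The same phenomenon already happens on the $6$-gon: the seed $\{(a,1+b),(b,a+c),(c,1+b)\}$ of Example 4.7 of \cite{lam2012laurent}, cited in the remark after Proposition \ref{partial}, has $F_a = F_c$. So the exhaustive check you propose would turn up counterexamples rather than a contradiction.

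The paper's proof is accordingly not a contradiction but a classification: it enumerates \emph{all} surfaces admitting triangulations with $F_i = F_j$ --- the $6$-gon, $M_4$, the cylinder $C_{2,2}$, and the torus and Klein bottle each with one boundary component and two marked points --- and observes that every such coincidence requires boundary segments carrying no variables. The conclusion $\hat{F}_i = F_i$ then follows solely from the paper's standing convention that each boundary segment receives a variable, which rules all these configurations out. Your second branch has the related defect: you expect to force an arrow between $i$ and $j$, contradicting Lemma \ref{noarrows}, but the paper's Cases 1--3 (splitting on whether $i$ and $j$ are connected to $1$, $2$, $3$ or $4$ vertices, with Case 4 excluded since it would yield a closed, hence punctured, surface) exhibit perfectly consistent configurations with no such arrow; your parenthetical ``collapsing onto one of the exceptional surfaces above'' cannot absorb them, since your exceptional list contains only $M_4$ and the Klein bottle from Lemma \ref{cancellation}, missing the $6$-gon, the cylinder and the torus. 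To repair the proof, replace the expected contradictions by the full enumeration of exceptional surfaces and then invoke the boundary-variable convention as the final step.
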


\begin{proof}

By Lemma \ref{nonormalisation} it suffices to show that $F_j \neq F_i$ for any $j \neq i$. Now, by Lemma \ref{noarrows} we know there are no arrows between $i$ and $j$. Due to Lemma \ref{weight} we know there are no arrows of weight greater than $1$ in the $\{i,j\}$-restriction of $Q$. Furthermore, by Lemma \ref{cancellation} and Lemma \ref{weight} if $i$ (or $j$) is connected to both $k$ and $\tilde{k}$ for some vertex $k$ in $Q$, then the corresponding surface must be either the M\"obius strip $M_4$ or the Klein bottle with one boundary component and two marked points, where neither surface has been allocated boundary variables. Having dealt with these cases, from here on we may therefore assume $i$ and $j$ are connected to at most one of $k$ and $\tilde{k}$ for any vertex $k$ in $Q$. After reversing all arrows at $i$ if needed, $i$ and $j$ will locally have the same quiver up to exchanging $a$ and $\tilde{a}$. I.e. If $i \leftarrow k$ (or $i \rightarrow k$) then $j \leftarrow k$ ($j \rightarrow k$) or $j \leftarrow \tilde{k}$ ($j \rightarrow \tilde{k}$).  \newline

To determine the remaining surfaces which emit triangulations with $F_i = F_j$ we will split our task into four cases depending on whether $i$ and $j$ are connected to precisely $1$, $2$, $3$ or $4$ vertices. After exchanging the roles of $j$ and $\tilde{j}$ if necessary, we may assume there are arrows $i \leftarrow a$ and $j \leftarrow a$ for some fixed vertex $a$. Furthermore, note that in the quivers we draw we only include arrows between $i$ and $j$. For each of these quivers $R$ we are asking which triangulations $T$ of $(S,M)$ have the property that the $\{i,j\}$-restriction of $Q_{\overline{T}}$ is $R$. \newline

\noindent \underline{\textbf{Case 1}}: $i$ and $j$ are connected to precisely one vertex. \newline

The only such quiver for this case is $i \leftarrow a \rightarrow j$. Since $i$ and $j$ are not connected to any other vertex, the arcs $i$ and $j$ are the diagonals of quadrilaterals with three boundary segments and fourth side $a$. This yields the $6$-gon shown in Figure \ref{6gon}.

\begin{figure}[H]
\begin{center}
\includegraphics[width=4cm]{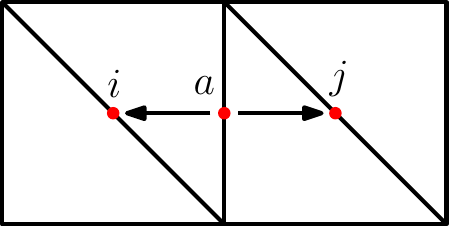}
\caption{The $6$-gon without boundary variables - the only surface emitting a triangulation whose adjacency quiver both contains the subquiver $i \leftarrow a \rightarrow k$; and has $F_i = F_j$.}
\label{6gon}
\end{center}
\end{figure}

\noindent \underline{\textbf{Case 2}}: $i$ and $j$ are connected to precisely two vertices. \newline

The possible subquivers for this case are listed in Figure \ref{case2quivers}.

\begin{figure}[H]
\begin{center}
\includegraphics[width=12cm]{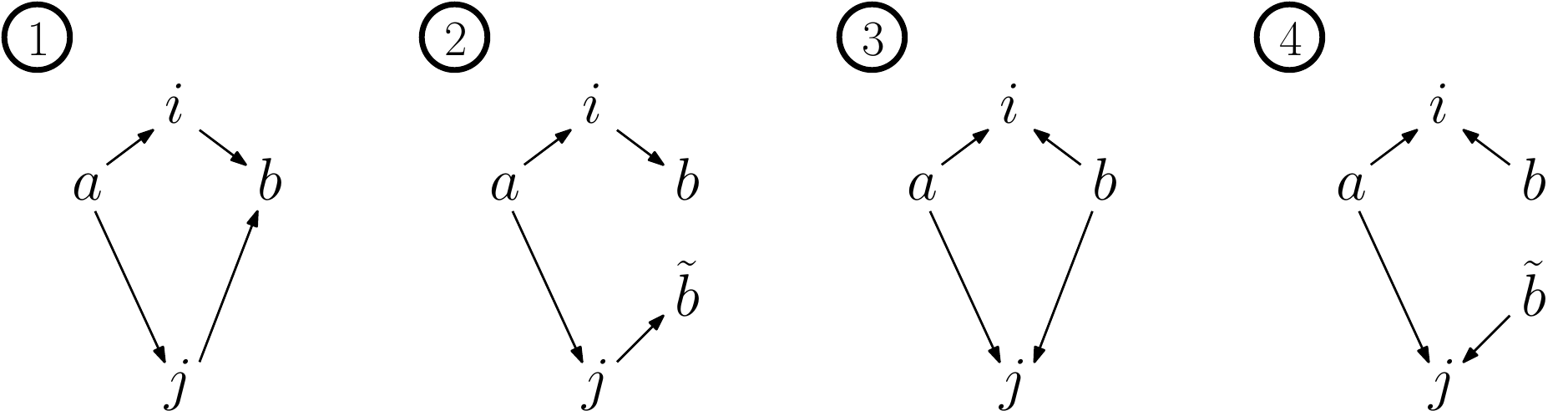}
\caption{The list of the possible $\{i,j\}$-restriction quivers when $i$ and $j$ are connected to precisely two vertices, and $F_i = F_j$.}
\label{case2quivers}
\end{center}
\end{figure}

For each of the subquivers listed in Figure \ref{case2quivers} we present below the possible triangulations/surfaces that produce them. To elaborate, we use the quiver to determine the conceivable adjacencies of triangles in the triangulation, and this is how the surface is reconstructed.

\begin{figure}[H]
\begin{center}
\includegraphics[width=13cm]{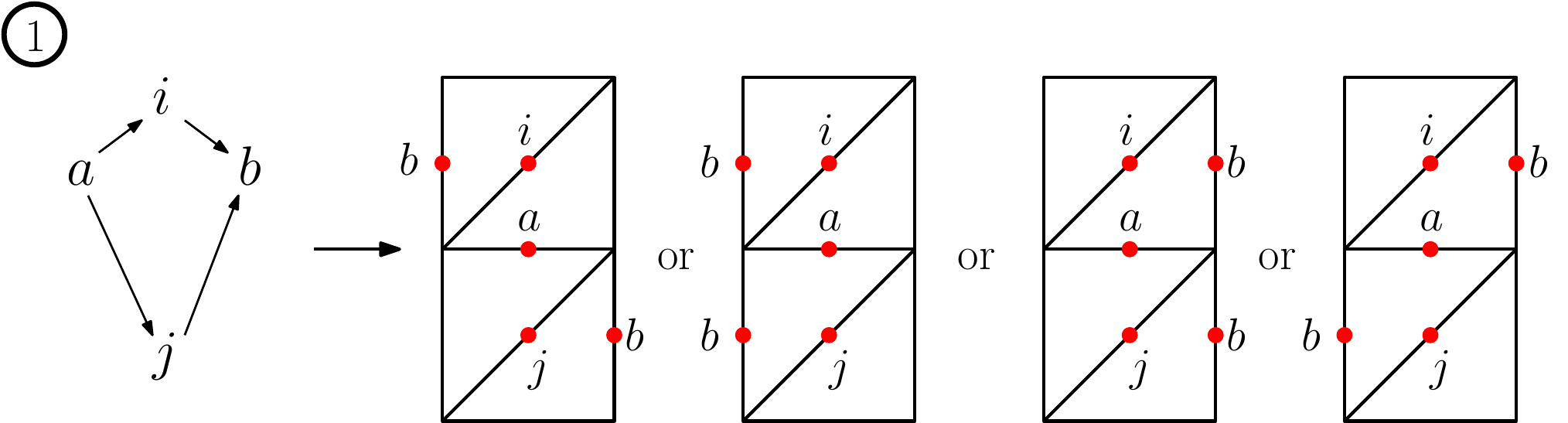}

\end{center}
\end{figure}

\begin{figure}[H]
\begin{center}
\includegraphics[width=13cm]{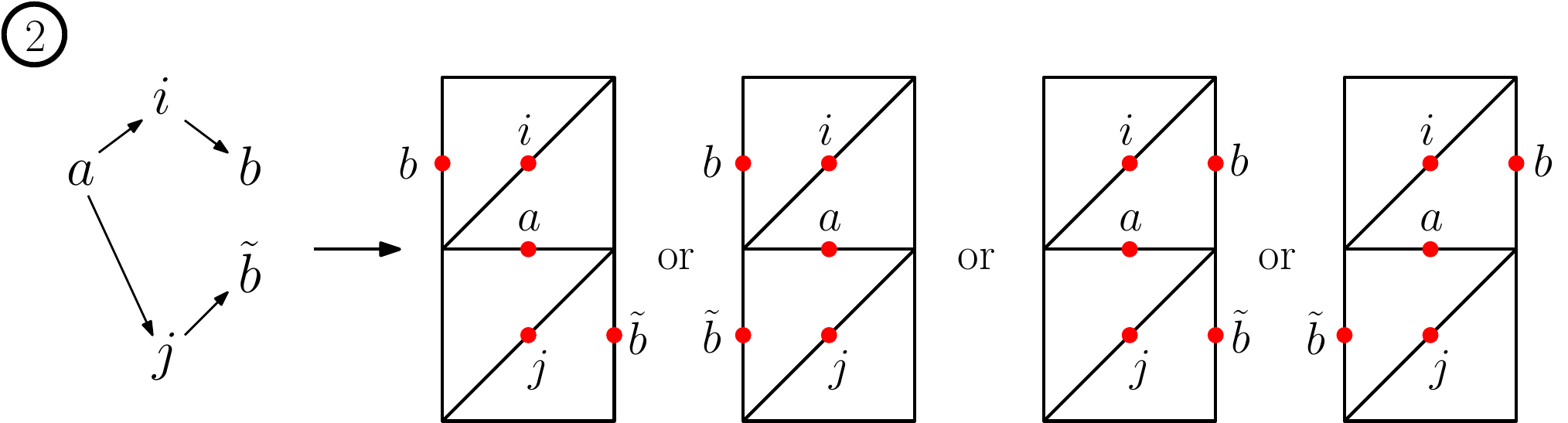}

\end{center}
\end{figure}

\begin{figure}[H]
\begin{center}
\includegraphics[width=13cm]{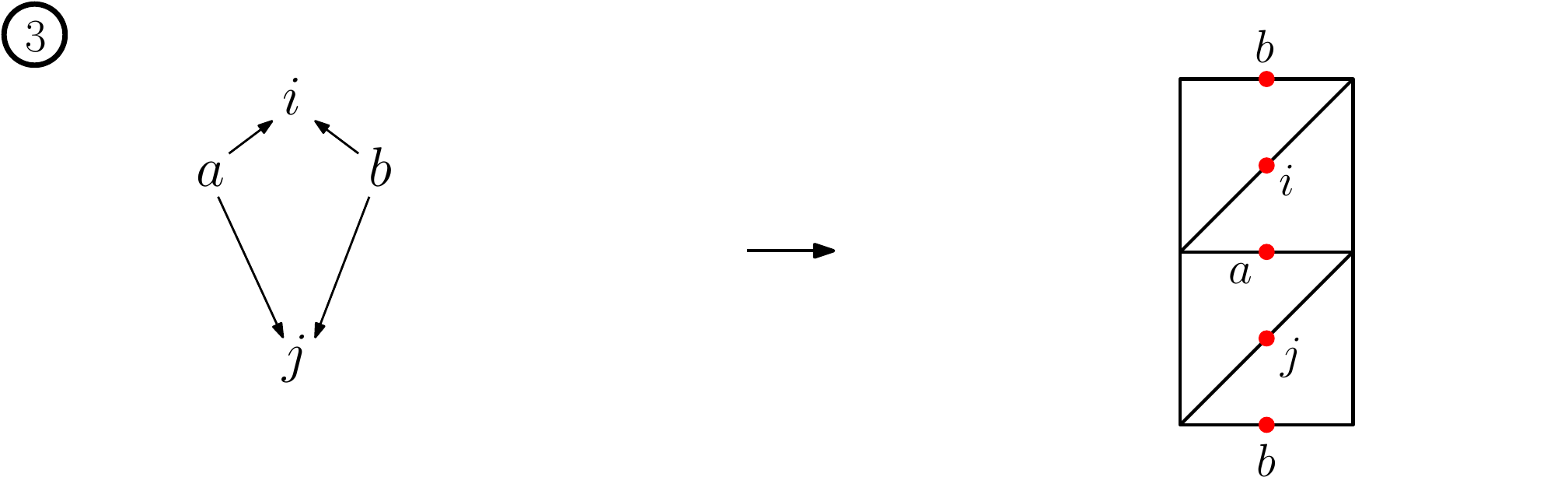}

\end{center}
\end{figure}

\begin{figure}[H]
\begin{center}
\includegraphics[width=13cm]{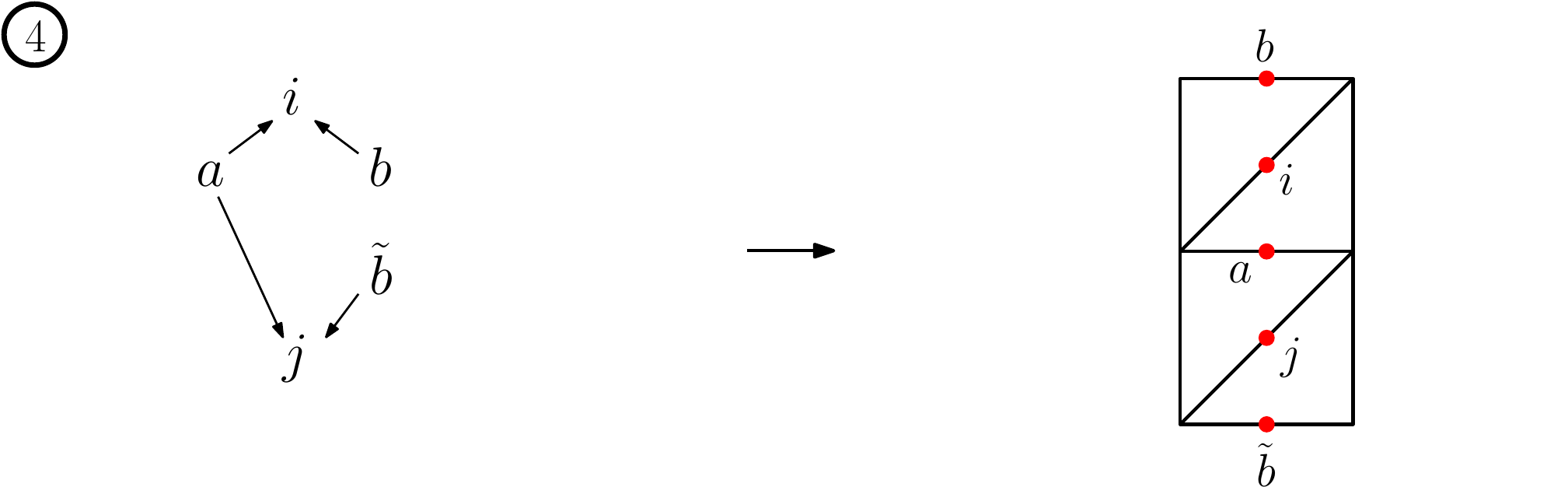}
\caption{Upon gluing and taking $\mathbb{Z}_2$-quotients, the (unpunctured) surfaces we obtain in Case $2$ are: The cylinder with $2$ marked points on each boundary component, and the M\"obius strip $M_4$.}
\label{case2surfaces}
\end{center}
\end{figure}

For each of these Case $2$ quivers we list the surfaces obtained after glueing and taking the $\mathbb{Z}_2$-quotient. 

\begin{enumerate}[label=\protect\circled{\arabic*}]
\item The first and fourth give the cylinder with two marked points on each boundary component; the second and third give the once punctured square.
\item All produce the M\"obius strip with four marked points.
\item The cylinder with two marked points on each boundary component.
\item The M\"obius strip with four marked points.
\end{enumerate}

\noindent \underline{\textbf{Case 3}}: $i$ and $j$ are connected to precisely three vertices. \newline

The possible subquivers for this case are listed in Figure \ref{case3quivers}. Here we are using the fact that there cannot be more than two incoming/outgoing arrows at any given vertex.

\begin{figure}[H]
\begin{center}
\includegraphics[width=12cm]{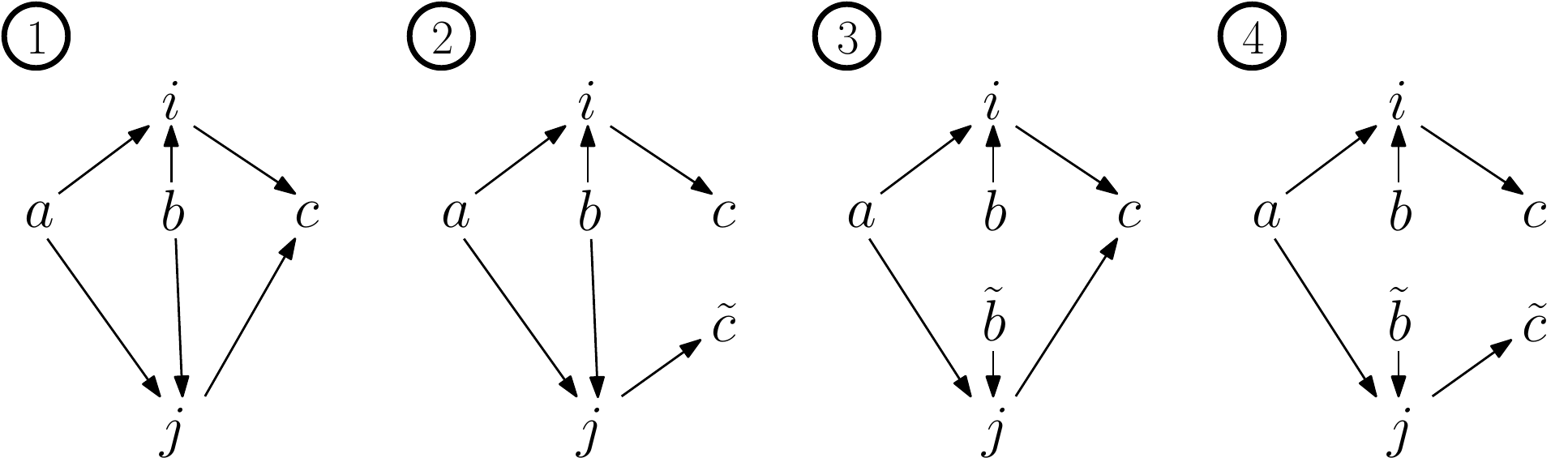}
\caption{The list of the possible $\{i,j\}$-restriction quivers when $i$ and $j$ are connected to precisely three vertices, and $F_i = F_j$.}
\label{case3quivers}
\end{center}
\end{figure}

Note that it suffices to check only subquivers $1$, $2$ and $3$ since $4$ is equivalent to $3$ after swapping the roles of $a$ and $b$ and using anti-symmetry. Below we present the possible surfaces producing the subquivers $1$, $2$ and $3$.

\begin{figure}[H]
\begin{center}
\includegraphics[width=12cm]{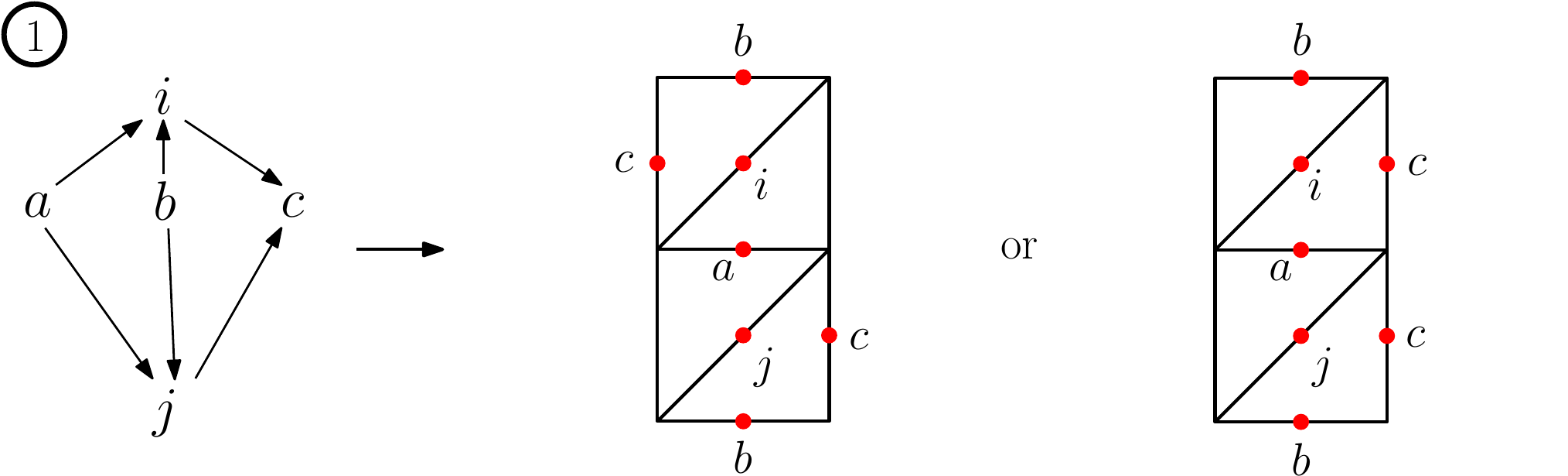}

\end{center}
\end{figure}

\begin{figure}[H]
\begin{center}
\includegraphics[width=12cm]{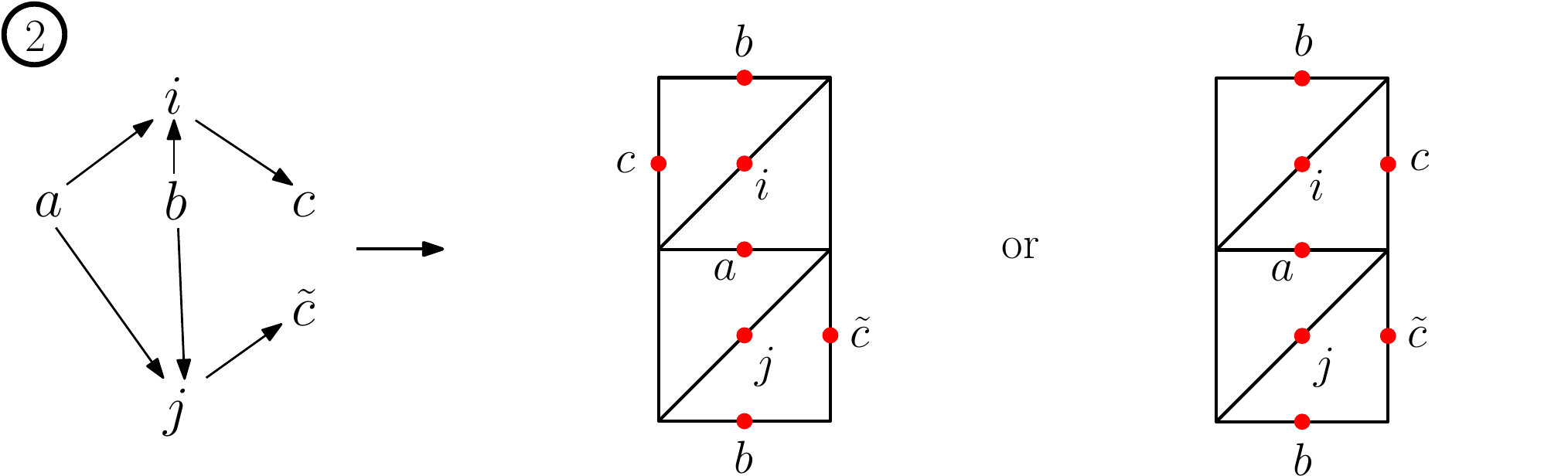}

\end{center}
\end{figure}

\begin{figure}[H]
\begin{center}
\includegraphics[width=12cm]{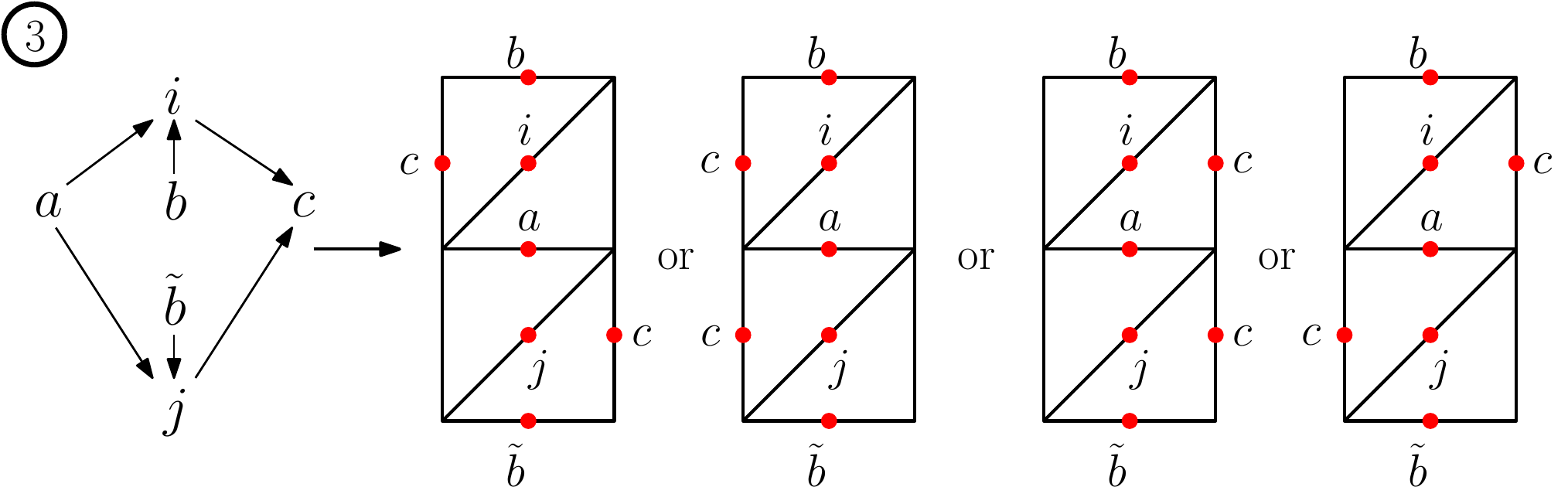}
\caption{Upon gluing and taking $\mathbb{Z}_2$-quotients, the (unpunctured) surfaces we obtain in Case $3$ are: The torus and Klein bottle, both with $1$ boundary component and $2$ marked points.}
\label{case3surfaces}
\end{center}
\end{figure}

For each of these Case $3$ quivers we list the surfaces obtained after glueing and taking the $\mathbb{Z}_2$-quotient.

\begin{enumerate}[label=\protect\circled{\arabic*}]
\item The first gives the torus with one boundary component and two marked points; the second produces the once punctured digon.
\item The first gives the the Klein bottle with one boundary component and two marked points; the second produces the once punctured M\"obius strip with two marked points.
\item The first and fourth give the Klein bottle with one boundary component and two marked points; the second and third produce the once punctured M\"obius strip with two marked points.
\end{enumerate}

\noindent \underline{\textbf{Case 4}}: $i$ and $j$ are connected to precisely four vertices. \newline

Being connected to four vertices the arc $i$ will be the diagonal of a square with sides $a$, $b$, $c$ and $d$. The arc $j$ will therefore be the diagonal of a square with sides possessing labels from the set $\{a,\tilde{a},b,\tilde{b},c,\tilde{c},d,\tilde{d}\}$. After gluing and taking the $\mathbb{Z}_2$-quotient then, if this procedure creates a surface, it will be a closed surface. However, we have forbidden punctured surfaces so none of our permitted surfaces satisfy Case $4$. \newline

In summary, the only unpunctured surfaces emitting triangulations producing non-distinct exchange polynomials are: the $6$-gon; the M\"obius strip with four marked points; the cylinder with two marked points on each boundary component; and the torus and the Klein bottle, both with one boundary component and two marked points. It is important to note that these surfaces only produce non-distinct exchange polynomials when their boundary segments receive no variables. In this paper we only consider unpunctured surfaces receiving boundary variables, therefore, any triangulation of our surfaces will yield a distinct collection of exchange polynomials.

\end{proof}

\begin{prop}
\label{partial}
Let $i$ be a t-mutable arc in a triangulation $T$ of $(S,M)$. Then flipping $i$ in $T$ corresponds to $LP$ mutation at $i$ of the associated seed $\Sigma_{Q_{\overline{T}}} := (\{x_1,\ldots, x_n\}, \{F_1^{Q_{\overline{T}}},\ldots, F_n^{Q_{\overline{T}}}\})$.

\end{prop}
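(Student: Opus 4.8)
The plan is to assemble the machinery built in the preceding results: Proposition~\ref{quivermut} reduces the statement to verifying its three hypotheses for the anti-symmetric quiver $Q_{\overline{T}}$ at the vertex $i$, after which Proposition~\ref{quiverandflip} translates double quiver mutation on the double cover back into a single flip on $(S,M)$.

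First I would check the three conditions required by Proposition~\ref{quivermut}. The seed $\Sigma_{Q_{\overline{T}}}$ is a valid LP seed by Lemma~\ref{validseed}, and $\hat{F}_k = F_k$ for every $k \in \{1,\ldots,n\}$ by Lemma~\ref{equals} (using that our surfaces are allocated boundary variables, so the exceptional surfaces appearing in Lemma~\ref{equals} are excluded). The remaining hypothesis is that there is no path $a \rightarrow i \rightarrow \tilde{a}$ for any vertex $a$. This is where t-mutability enters: were such a path to exist, Lemma~\ref{breakage} would force $T$ to contain the M\"obius strip $M_2$ with $i$ as its \emph{non} t-mutable arc, contradicting the assumption that $i$ is t-mutable. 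With all three hypotheses in place, Proposition~\ref{quivermut} yields that LP mutation of $\Sigma_{Q_{\overline{T}}}$ at $i$ coincides with the double quiver mutation $\mu_i \circ \mu_{\tilde{i}}(Q_{\overline{T}})$, the new cluster variable being $x_i' = \hat{F}_i/x_i = F_i/x_i$.

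Next I would identify this LP-mutated seed with the seed attached to the flipped triangulation. Since $i$ is t-mutable, $\mu_i(T)$ is again a triangulation, and by Proposition~\ref{quiverandflip} we have $\mu_i \circ \mu_{\tilde{i}}(\overline{T}) = \overline{\mu_i(T)}$, so the mutated quiver is precisely $Q_{\overline{\mu_i(T)}}$. Applying Lemma~\ref{correctpolys} to both $T$ and $\mu_i(T)$ shows that the exchange polynomials $F_j^{Q_{\overline{T}}}$ and $F_j^{Q_{\overline{\mu_i(T)}}}$ are exactly the exchange polynomials governing the lambda lengths of the arcs in $T$ and $\mu_i(T)$, respectively. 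In particular $F_i^{Q_{\overline{T}}}$ records how the lambda length changes under the flip of $i$, so the variable $F_i/x_i$ produced by LP mutation is the lambda length of the flipped arc, matching the variable exchange of Definition~\ref{lambdarules}; and the full collection of new exchange polynomials agrees on both sides.

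The genuinely difficult work lies in the earlier lemmas --- especially the case analysis of Lemma~\ref{equals} and the reconstruction arguments of Lemmas~\ref{breakage} and \ref{cancellation} --- rather than in this proposition, which is essentially an assembly of those results. The one point deserving care is the bookkeeping that double quiver mutation, LP mutation, and the surface flip all produce the \emph{same} labelled seed: one must confirm not merely that the underlying triangulations coincide, but that each exchange polynomial and each cluster variable is identified correctly under the convention $x_i = x_{\tilde{i}}$, which is precisely where Lemma~\ref{correctpolys}, applied to the flipped triangulation $\mu_i(T)$, is indispensable.
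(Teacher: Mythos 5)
Your proof is correct and follows essentially the same route as the paper's: verifying the hypotheses of Proposition~\ref{quivermut} via Lemmas~\ref{validseed}, \ref{equals} and \ref{breakage}, then combining it with Proposition~\ref{quiverandflip} and Lemma~\ref{correctpolys} to translate double mutation on the cover into the flip of $i$ in $T$. If anything, your writeup is more careful than the paper's, which conflates Propositions~\ref{quivermut} and \ref{quiverandflip} in its citations and does not explicitly apply Lemma~\ref{correctpolys} to the flipped triangulation $\mu_i(T)$ as you do.
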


\begin{proof}

By Lemmas \ref{correctpolys} and \ref{equals} we obtain that LP and quasi-cluster mutation agree on the level of variable change. Moreover, Lemma \ref{breakage} tells us that if $i$ is a t-mutable arc in $T$ then there is no path $a \rightarrow i \rightarrow \tilde{a}$ in $Q_{\overline{T}}$ for any vertex $a$. Lemmas \ref{validseed} and \ref{equals} confirm that $Q_{\overline{T}}$ is a valid seed and $F_1^{Q_{\overline{T}}} = \hat{F}_1^{Q_{\overline{T}}}$ for each exchange polynomial of $\Sigma_{Q_{\overline{T}}}$. Therefore we may evoke Proposition \ref{quiverandflip} to verify that double mutation at $i$ and $\tilde{i}$ in $Q_{\overline{T}}$ coincides with LP mutation at $i$, for each t-mutable arc $i$ in $T$. Finally, since Proposition \ref{quiverandflip} tells us that double mutation at $i$ and $\tilde{i}$ corresponds to flipping the arc $i$ in $T$, then the proof is complete. 

\end{proof}

\noindent \textbf{Remark}: Note that the LP seed $\{(a,1+b),(b,a+c),(c,1+b)\}$ in Example 4.7, \cite{lam2012laurent} fails to agree with cluster algebra mutation because it arises from the $6$-gon without any boundary variables. We present more of a discussion about this in Subsection \ref{punctures}.

\begin{subsection}{Proof of the main theorem.}

\begin{thm}
\label{main}
Let $(S,M)$ be an unpunctured (orientable or non-orientable) marked surface. Then the LP cluster complex $\Delta_{LP}(S,M)$ is isomorphic to the quasi-arc complex $\Delta^{\otimes}(S,M)$, and the exchange graph of $\mathcal{A}_{LP}(S,M)$ is isomorphic to $E^{\otimes}(S,M)$. 

More explicitly, let $T$ be a quasi-triangulation of $(S,M)$ and $\Sigma_{T}$ its associated LP seed. Then in the LP algebra $\mathcal{A}_{LP}(\Sigma_{T})$ generated by this seed the following correspondence holds:
\begin{align*}
&\hspace{8mm} \mathbf{\mathcal{A}_{LP}(\Sigma_T)} & & &\mathbf{(S,M)} \hspace{20mm}&  \\ 
&\textit{Cluster variables} &\longleftrightarrow& &\textit{Lambda lengths of quasi-arcs} & \\
&\hspace{8mm}\textit{Clusters}  &\longleftrightarrow& &\textit{Quasi-triangulations} \hspace{7mm}& \\
&\hspace{4mm} \textit{LP mutation}   &\longleftrightarrow&  &\textit{Flips} \hspace{21.5mm}& \\
\end{align*}

\end{thm}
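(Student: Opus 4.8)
The plan is to reduce the global statement to a purely local claim---that \emph{every} flip of a quasi-triangulation, of each combinatorial type, corresponds to an LP mutation of the associated seed---and then to propagate this local correspondence across the entire flip graph using its connectivity (Proposition \ref{flipconnected}). Proposition \ref{partial} already settles the correspondence for flip type (1): the flip of a t-mutable arc inside a triangulation. The remaining work is therefore to extend the definition of the associated LP seed $\Sigma_T$ to quasi-triangulations containing one-sided closed curves, and to verify the flip--mutation correspondence for the two flip types that Proposition \ref{partial} does not reach, namely type (2), where an arc flips to a one-sided closed curve (the $\alpha_\gamma \leftrightarrow \beta_\gamma$ exchange inside a M\"obius strip $M_1^\gamma$), and type (3), where an arc crosses a one-sided closed curve $c$.

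First I would pin down $\Sigma_T$ for an arbitrary quasi-triangulation by reading its exchange polynomials directly off the lambda-length relations recorded in the three new flip figures. For a triangulation the seed is $\Sigma_{Q_{\overline{T}}}$, and Lemmas \ref{validseed} and \ref{equals} guarantee that it is a valid LP seed with vacuous normalisation, $\hat{F}_i = F_i$. For a quasi-triangulation containing a one-sided curve I would check the same two properties---irreducibility of each exchange polynomial and $\hat{F}_i = F_i$---by direct inspection of the local configuration around the curve, since away from the one-sided curve the seed coincides with a triangulation seed and the only new local data is the $M_1^\gamma$ pattern of Figure \ref{newcompatibility}.

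Next I would verify flip types (2) and (3) directly, running the three-step LP mutation process on the local exchange polynomials and confirming that the output agrees with the prescribed lambda-length relation. The key simplification is that, because normalisation is vacuous, Step 1 substitutes $\hat{F}_i\rvert_{x_j\leftarrow 0} = F_i\rvert_{x_j\leftarrow 0}$, a monomial in the relevant cases, so Step 2 folds into Step 3 exactly as in the proof of Proposition \ref{quivermut}, and one only has to track exponents and confirm that irreducibility is preserved. With all three flip types matched, the map sending a quasi-arc to its lambda length, a quasi-triangulation to its cluster, and a flip to the corresponding LP mutation is a well-defined local isomorphism at every seed. Since the flip graph is connected (Proposition \ref{flipconnected}) and every quasi-arc admits a unique flip (Proposition \ref{flip}), this local isomorphism propagates: the seeds reachable from $\Sigma_T$ under LP mutation are exactly the quasi-triangulations, their cluster variables are exactly the lambda lengths of quasi-arcs, and the maximal simplices (respectively the edges) on the two sides coincide, yielding $\Delta_{LP}(S,M)\cong\Delta^{\otimes}(S,M)$ and the stated isomorphism of exchange graphs.

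The hard part will be flip type (3): when an arc meets a one-sided closed curve the surface geometry forces exchange relations that are genuinely polynomial rather than binomial, so unlike the t-mutable case I cannot simply appeal to quiver combinatorics through the double cover. I expect the main effort to lie in showing that the three-step mutation process reproduces these polynomial relations exactly and that irreducibility survives, together with the bookkeeping needed to confirm that $\hat{F}_i = F_i$ continues to hold in every seed containing a one-sided curve; without this, the normalisation in Step 1 would no longer be vacuous and the clean correspondence would break down.
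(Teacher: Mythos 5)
Your overall skeleton matches the paper's: reduce to local flip--mutation checks, settle the triangulation case via Proposition \ref{partial}, handle the configurations involving one-sided closed curves by direct computation, and propagate along the connected flip graph (Proposition \ref{flipconnected}). However, your plan contains a genuine error: you propose to prove that $\hat{F}_i = F_i$ in \emph{every} seed, including those associated to quasi-triangulations containing a one-sided closed curve, and you explicitly stake the argument on this ("without this \ldots the clean correspondence would break down"). That claim is false, and the paper's own computation in its Case (a) exhibits the failure. After flipping the arc $a$ of Figure \ref{casea} to a one-sided closed curve $a'$, one obtains $F_{a'} = c+d$ and $F_b' = (c+d)^2 + a'^2cd$; substituting $a' \leftarrow \frac{c+d}{x}$ into $F_b'$ gives $(c+d)^2\left(1 + \frac{cd}{x^2}\right)$, which is divisible by $F_{a'}^2$, so $\hat{F}_b' = \frac{(c+d)^2 + a'^2cd}{a'^2} \neq F_b'$. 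The same non-vacuous normalisation recurs in every subcase of the paper's Case (b) involving the $M_1$ configuration (the left and middle triangulations of Figure \ref{subcaseb}). Note also that Lemma \ref{nonormalisation}, which you would presumably lean on, is stated and proved only for triangulation seeds $\Sigma_{Q_{\overline{T}}}$; its quiver-based proof does not apply once a one-sided curve is present.

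The correct mechanism, and the one the paper uses, is subtler: vacuous normalisation is needed (and holds, by Lemma \ref{equals}) only where Proposition \ref{quivermut} is invoked, i.e.\ for t-mutable arcs in triangulations. For seeds containing one-sided closed curves one instead runs LP mutation on the (non-normalised) polynomials $F_i$ and then checks that the resulting \emph{normalised} polynomials $\hat{F}_i'$ --- which are genuinely Laurent, not polynomial --- are exactly what encode the lambda-length exchange relations on the surface; the variable exchange $x_i' = \hat{F}_i/x_i$ likewise uses the normalised polynomial. So your Step-1 simplification ("$\hat{F}_i\rvert_{x_j\leftarrow 0} = F_i\rvert_{x_j\leftarrow 0}$, a monomial") is unavailable in precisely the cases you flag as the hard part, and a proof following your outline would either get stuck trying to establish a false normalisation claim or would verify the wrong exchange relations (the $F_i$'s rather than the $\hat{F}_i$'s) against the surface geometry. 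The fix is to reorganise the case analysis as the paper does --- flips from a triangulation into a one-sided curve, then the finitely many local configurations around a one-sided curve --- computing $\hat{F}$ explicitly in each and comparing it, not $F$, with the flip relations.
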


\begin{proof}

By Proposition \ref{partial} all that is left to show is that LP mutation coincides with quasi-cluster mutation when: 

\begin{enumerate}[label=(\alph*)]
\item we flip an arc in a triangulation to a one-sided closed curve.
\item we flip quasi-arcs in quasi-triangulations containing a one-sided closed curve.
\end{enumerate}

\noindent \underline{\textbf{Case (a)}}. \newline

To resolve case (a) it suffices to show that flipping the arc $a$ in Figure \ref{casea} agrees with LP mutation at $a$ of the associated seed. 

LP mutation at $a$ produces the exchange polynomials: \newline

$F_a' = F_a$ \hspace{2mm} $F_b' = (c+d)^2 + a'^2cd$ \hspace{2mm}$F_c' = dy + a'bw$ \hspace{2mm} $F_d' = cz + a'bx$. \newline

A simple computation produces the associated normalised exchange polynomials, which are recorded below. These normalised polynomials do indeed describe how lengths of arcs in the flipped quasi-triangulation exchange, so case (a) has been verified. \newline

$\hat{F}_a' = F_a$ \hspace{2mm} $\hat{F}_b' = \frac{(c+d)^2 + a'^2cd}{a'^2}$ \hspace{2mm}$\hat{F}_c' = dy + a'bw$ \hspace{2mm} $\hat{F}_d' = cz + a'bx$  \newline

\begin{figure}[H]
\begin{center}
\includegraphics[width=9cm]{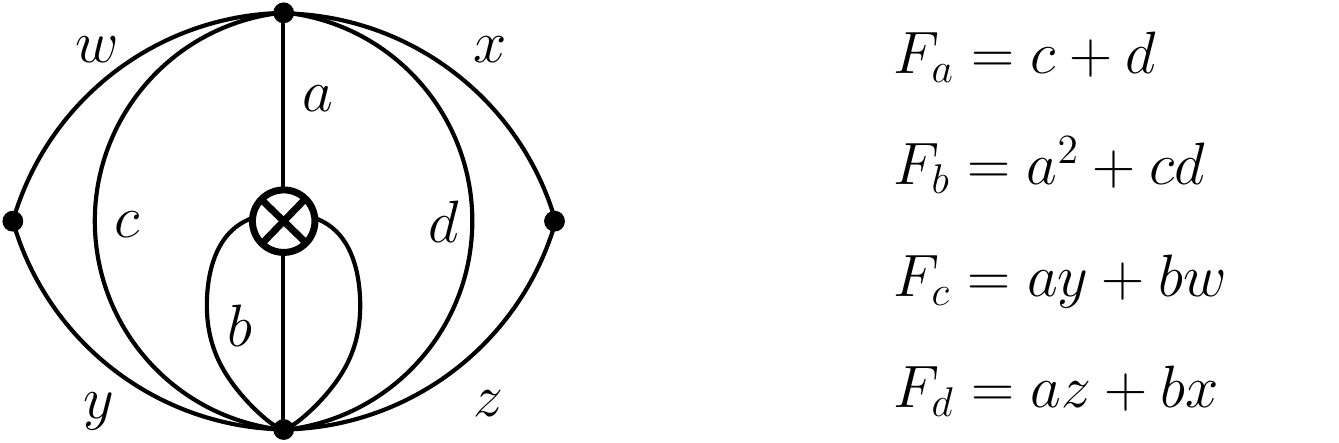}
\caption{A triangulation together with the associated exchange polynomials.}
\label{casea}
\end{center}
\end{figure}

\noindent \underline{\textbf{Case (b)}}. \newline

\noindent We split the task of verifying case (b) into four subcases:

\begin{enumerate}

\item Flipping a quasi-arc that is not enclosed in a region containing a one-sided closed curve.

\item Flipping $b, c$ or $d$ in the triangulation on the left of Figure \ref{subcaseb}.

\item Flipping $a$ in the middle triangulation of Figure \ref{subcaseb}.

\item Flipping $b, d$ or $y$ in the triangulation on the right of Figure \ref{subcaseb}.

\end{enumerate}

\begin{figure}[H]
\begin{center}
\includegraphics[width=13cm]{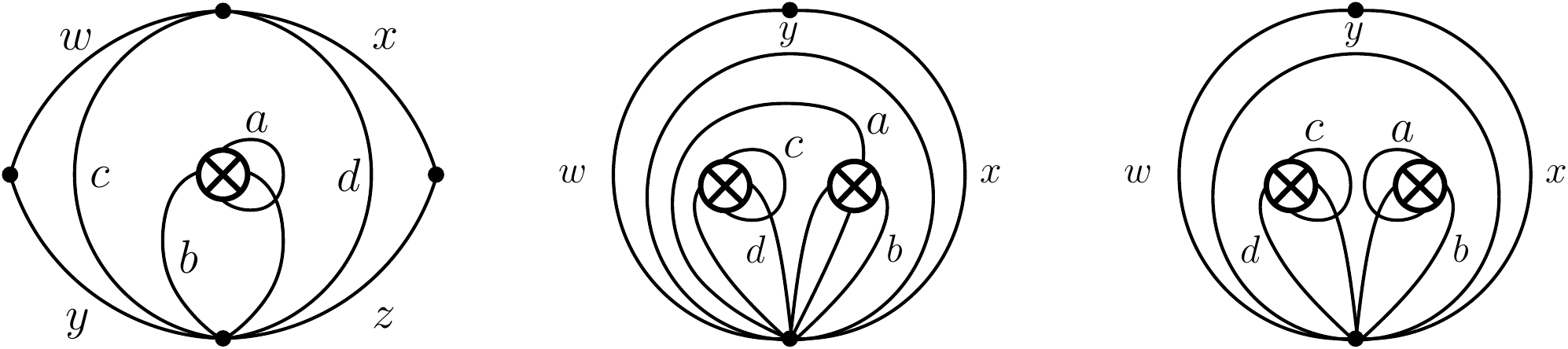}
\caption{The three types of (local) configurations that contain a one-sided closed curve.}
\label{subcaseb}
\end{center}
\end{figure}

\noindent \underline{Subcase $1$}: Here LP mutation and surface flips coincide due to Proposition \ref{partial} and Case (a). \newline

\noindent \underline{Subcase $2$}: To verify that LP mutation and surface flips coincide for this case, it suffices to check mutation at $b$ and $c$. \newline

The exchange polynomials corresponding to the left triangulation in Figure \ref{subcaseb} are:

\begin{center}
$F_a = c + d$ \hspace{2mm} $F_b = (c+d)^2 + a^2cd$ \hspace{2mm}$F_c = dy + abw$ \hspace{2mm} $F_d = cz + abx$.
\end{center}

\noindent Mutating at $b$ produces the following exchange polynomials:

\begin{center}
$F_a' = F_a$ \hspace{2mm} $F_b' = F_b$ \hspace{2mm}$F_c' = ab'y + dw$ \hspace{2mm} $F_d' = ab'z + cx$.
\end{center}

\noindent If instead we mutate at $c$ we obtain the following exchange polynomials:

\begin{center}
$F_a' = y+c'$ \hspace{2mm} $F_b' = (y+c')^2 + a^2yc'$ \hspace{2mm}$F_c' = F_c$ \hspace{2mm} $F_d' = wz + xc'$.
\end{center}

The normalised versions of both of these sets of polynomials describe how lengths of arcs transform in their respective quasi-triangulations, so this completes subcase $2$. \newline

Subcases $3$ and $4$ hold analogous to case $(a)$ and subcase $2(b)$, respectively.

\end{proof}

\end{subsection}

\begin{subsection}{Punctured surfaces.}
\label{punctures}

We confess now that we have omitted punctured surfaces throughout this paper on account of their failure to emit an LP structure that encompasses the cluster structure already established (on orientable surfaces) in \cite{fomin2008cluster}. The reason why the flip/length structure of a punctured surface cannot be imitated by an LP structure is simple; if a surface is punctured then it emits a tagged triangulation containing two (distinct) arcs whose plain versions coincide. These two arcs have identical exchange polynomials, so by Lemma \ref{nonormalisation} the normalised exchange polynomials differ from the exchange polynomials. This ensures the LP structure and the quasi-cluster structure will not coincide.

Recall that when the boundary segments receive no variables the $6$-gon and the cylinder $C_{2,2}$ have the same cluster structure as the punctured triangle and the twice punctured monogon, respectively - see Figure \ref{puncturedsurfaces}. From the comments made above we instantly get confirmation of the fact obtained in the proof of Lemma \ref{equals}, that in the absence of boundary variables, there is no LP algebra producing the cluster structure of the $6$-gon or the cylinder $C_{2,2}$. One might be tempted to believe the torus with one boundary component and two marked points follows suit, and shares its cluster structure with a punctured surface, however, the work of Bucher, Yakimov \cite{bucher2016recovering} and Gu \cite{gu2011graphs} tells us that this is not the case.

\begin{figure}[H]
\begin{center}
\includegraphics[width=12cm]{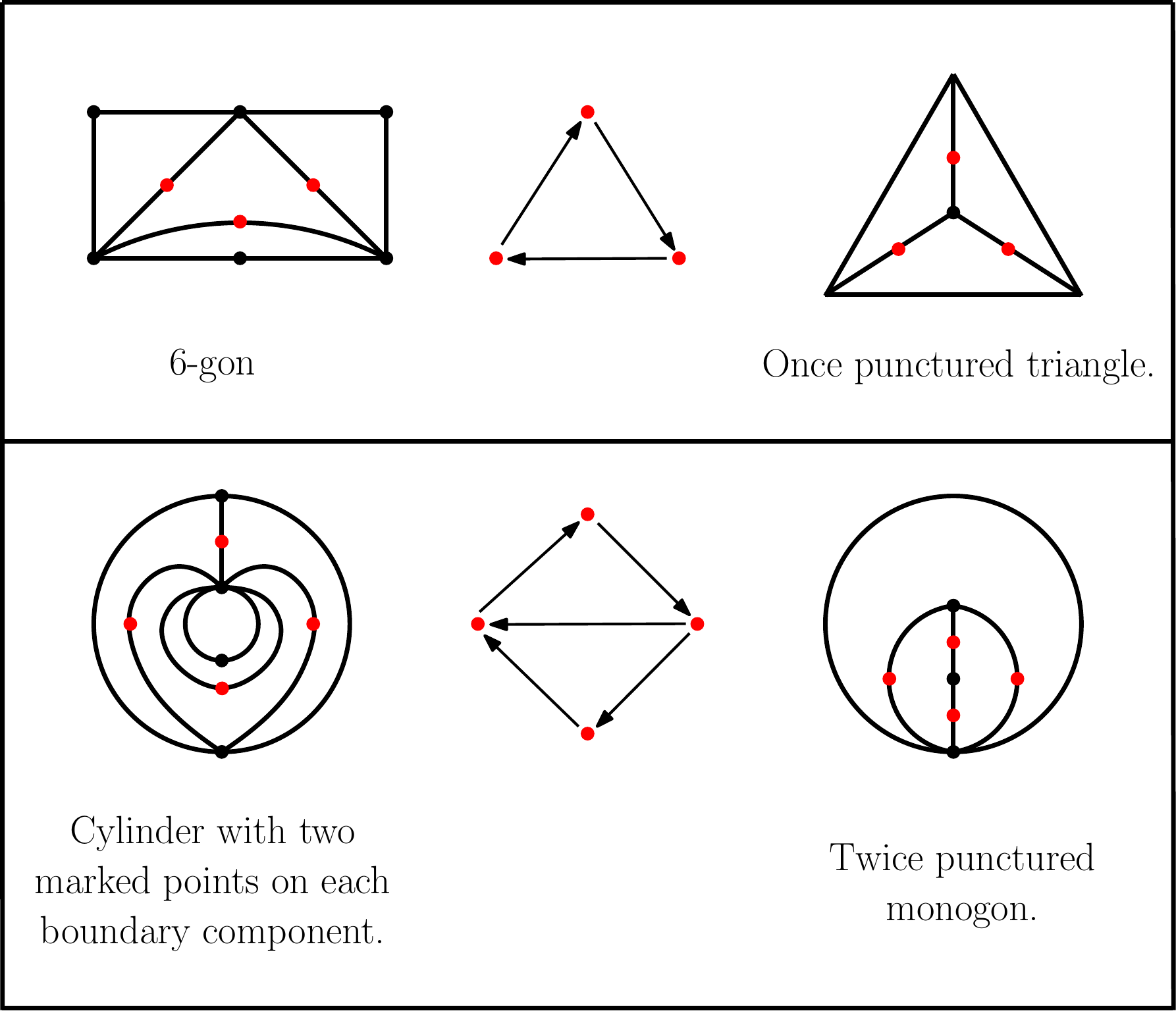}
\caption{Here we list all orientable bordered surfaces which share their cluster algebra structure with a punctured surface. For each of these bordered surfaces we provide the punctured surface possessing the same cluster structure. In each case we present triangulations emitting matching adjacency quivers.}
\label{puncturedsurfaces}
\end{center}
\end{figure}

\end{subsection}

\nocite{*}

\bibliography{laurent}
\bibliographystyle{plain}

\Addresses

\end{document}